\numberwithin{equation}{section}
\DeclareMathOperator{\im}{im} 
\DeclareMathOperator{\sgn}{sgn}
\DeclareMathOperator{\vol}{vol}
\DeclareMathOperator{\e}{e}
\DeclareMathOperator{\lk}{lk}
\DeclareMathOperator{\st}{St}
\DeclareMathOperator{\cl}{Cl}
\begin{document}
\theoremstyle{plain}
\newtheorem{thm}{Theorem}[section]
\newtheorem{lem}[thm]{Lemma}
\newtheorem{prop}[thm]{Proposition}
\newtheorem{coll}[thm]{Corollary}
\newtheorem{conj}{Conjecture}[section]

\theoremstyle{definition}
\newtheorem{defn}{Definition}[section]
\newtheorem{exmp}{Example}[section]

\theoremstyle{remark}
\newtheorem{rem}{Remark}[section]
\newtheorem*{note}{Note}

\newcommand{\rb}{\mathbb{R}}
\newcommand{\lup}{\Delta_{i}^{up}}
\newcommand{\ldown}{\Delta_{i}^{down}}
\newcommand{\s}{\mathbf{s}}
\newcommand{\dote}{\stackrel{\circ}{=}}
\newcommand{\dotu}{\stackrel{\circ}{\cup}}

\begin{frontmatter}

\title{Spectra of combinatorial Laplace operators on simplicial complexes}
\author{Danijela Horak}
\address{Max Planck Institute for Mathematics in the Sciences,
  Inselstrasse 22, D-04103 Leipzig, Germany}
\author{J\"urgen Jost}
\address{Max Planck Institute for Mathematics in the Sciences,
  Inselstrasse 22, D-04103 Leipzig, Germany\\
Department of Mathematics and Computer Science, Leipzig
  University, D-04091 Leipzig, Germany\\
Santa Fe Institute for the Sciences of Complexity, Santa Fe, NM 87501,
USA}

\begin{abstract}
 We first develop a  general framework for  Laplace operators defined in
 terms of the combinatorial structure of a simplicial complex. This
 includes, among others,  the graph Laplacian, the combinatorial
 Laplacian on simplicial complexes, the weighted Laplacian, and 
the normalized graph Laplacian.
This framework then allows us to  define  the normalized  Laplace
operator $\lup$  on simplicial complexes which we then systematically investigate.
We study the effects of 
a wedge sum, a join and a duplication of a motif on  the  spectrum of
the normalized Laplace operator, and identify 
some of the combinatorial features of a simplicial
complex that are encoded in its spectrum. 
 \end{abstract}
\begin{keyword}
normalized graph Laplacian, combinatorial Laplacian, hypergraph Laplacian, graph Laplacian, simplicial complex
\end{keyword}
\end{frontmatter}

\section{Introduction}
The  study of graph  Laplacians has a long and prolific history. It first appeared in a  paper by Kirchhoff \cite{Kirchhoff}, where he analysed electrical networks and stated  the celebrated matrix tree theorem.
The  Laplace operator $L$ of  \cite{Kirchhoff} operates on a real
valued function $f$ on the vertices of a graph as
\begin{equation}
\label{GL}
Lf(v_{i})=\deg v_{i} f(v_{i}) -\sum_{v_{i}\sim v_{j}}f(v_{j}).
\end{equation}
In spite of its rather early beginnings this topic did not gain much attention among scientists until the early 1970's and the work of Fiedler  \cite{Fiedler1973}, and his results  on  correlation among  the smallest non-zero eigenvalue  
and the connectivity of a graph.
Before Fiedler drew attention to the graph Laplacian, 
  graphs were usually characterized by  means of the spectrum of its
  adjacency matrix, but in the wake of \cite{Fiedler1973}, 
there has  been a number  of papers 
 ( e.g. \cite{GroneMerris} ) arguing  in  favour of the  graph Laplacian and its spectrum. 
 For  good survey articles on the graph Laplacian the reader is referred to 
\cite{MerrisSurvey} or \cite{MoharSurvey}.

In a different tradition,  the graph Laplacian  was generalized to simplicial complexes  by Eckmann \cite{Eckmann1944}, 
who formulated and proved  the discrete version of the Hodge theorem;
this can be formulated as 
$$
  \ker (\delta_{i}^{*}\delta_{i}+\delta_{i-1}\delta_{i-1}^{*})\cong \tilde{H}^{i}(K,\rb)
$$
where 
$$
L_{i}=\delta_{i}^{*}\delta_{i}+\delta_{i-1}\delta_{i-1}^{*}
$$
is the higher order combinatorial Laplacian. 
Many subsequent papers then studied  properties of the
 \emph{higher order combinatorial Laplacian} (see
 \cite{Dong},\cite{Friedman},\cite{DuvalMT}),  building upon
 properties of the graph Laplacian. In particular, this operator has been employed extensively in investigating the  features of networks related to  dynamics and coverings  (see  \cite{Muhammad},\cite{Alireza}). Recently   the monograph \cite{HiraniThesis} appeared, where the combinatorial Laplacian  is systematically  studied in a context of a  discrete exterior calculus.

While the graph Laplacian introduced by Kirchhoff naturally appears in
his work on electrical flows, for other processes on graphs, like
random walks or diffusion, a different operator appears. This was
first investigated 
almost a century after Kirchhoff's work by Bottema \cite{Bottema} who studied a transition probability
operator on graphs that is equivalent to the following version of 
the graph Laplace operator
\begin{equation}
\label{NGL}
\Delta f(v_{i})= f(v_{i}) -\frac{1}{\deg v_{i}}\sum_{v_{i}\sim v_{j}}f(v_{j}).
\end{equation}
It took, however, almost another one hundred  years until  a
significant advance in the study of this operator $\Delta$,
  which got to be known by the name  \emph{normalized graph Laplacian} to distinguish it from the graph Laplacian $L$ and to emphasize the fact that its eigenvalues are in the interval $[0,2]$.
In contrast to $L$, $\Delta$ is well suited for  problems  related to random walks on graphs and graph expanders.
For a good introduction to this topic the reader is invited to consult \cite{Chung} or \cite{Grigoryan}.

The main  goals of this paper 
are a systematic framework that  can be used as  a starting point for
a study of any of the above mentioned versions of the Laplace
operator, 
and the definition and investigation of    the normalized Laplacian on simplicial complexes.
 The latter is based on the simple observation that the  form of the combinatorial Laplacian is tightly connected to 
the choice of the scalar product on the coboundary vector spaces. 
On the other hand, the scalar products can be viewed in terms of  weight functions. 
Thus, by controlling the weights, we  control the   range of the eigenvalues of the Laplace operator.
Most importantly, for the normalized Laplacian, the eigenvalues are
confined to the  range $[0,i+2]$, 
where $i$ is the order of the Laplacian. This  generalizes the fact
that the eigenvalues  of the normalized graph Laplacian $\Delta$ are
in the interval $[0,2]$. 
We shall analyze  the spectrum  of this normalized Laplacian and its
connection with the combinatorial structure of the    
simplicial complex. Perhaps somewhat surprisingly, this generality
also permits us  
 to gain   new insights for its special case,  the already extensively
 studied  normalized graph Laplacian.

There have already been several attempts towards the normalization of
the combinatorial Laplace operator. In particular,  Chung in 
\cite{ChungHypergraphs}  defined a normalized Laplacian on simplicial complexes as 
$ \delta^{*}\delta+\rho\delta\delta^{*}$, where $\rho$ is a positive constant.
However,  the spectrum of this operator is not bounded by a constant.
Recently, Taszus  \cite{Taszus} suggested a normalization of the combinatorial Laplacian via its matrix form
$ D^{-1/2}L_{i}D^{-1/2} $, where $L_{i}$ is the matrix corresponding to
the operator $ \delta^{*}\delta+\delta\delta^{*}$, where the adjoints
are defined with respect to the standard scalar product and $D$ is a diagonal matrix of $L_{i}$. This operator, too, does not have bounded spectrum.
Lu and Pung in \cite{Lu} considered random walks on hypergraphs, and
to that end defined a normalized Laplacian on a uniform hypergraph $H$ as 
$\mathcal{L}_{s}(H)=\Delta(G_{H}^{s})$, where $G_{K}^{s}$ is a
$(s-1)$-dual graph (see Definition \ref{dual graph}) of a simplicial
complex (hypergraph) $H$. The drawback of this definition is that it
fails to fit into general theory and doesn't take into account higher
order relations among edges of a hypergraph.

As is already clear from Eckmann's seminal work \cite{Eckmann1944},
the Laplacians of a simplicial complex encode its basic topology, that
is, its homology groups. In terms of the spectrum, they are given by
the dimensions of the eigensets for the eigenvalue $0$. This is the same
for all the Laplace operators investigated here. These operators,
however, differ in the nonzero part of the spectrum, and thereby
encode specific combinatorial or geometric features of a (perhaps weighted) simplicial complex in
addition to its topological aspects. Many combinatorial operations
that one can perform on a simplicial complex do not affect its
homology; nevertheless, they typically leave characteristic traces in
the spectrum of a suitable Laplace operator, and that is what we are
trying to explore. In the weighted case, there is additional geometric
information that likewise influences the spectrum. Let us try to explain this aspect
from the following perspective. As is well known, from a covering of a
set, one can construct a simplicial complex, by letting an
$i$-dimensional simplex corresponds to every intersection of $i$
members of the covering. The \v{C}ech cohomology of the covering then
is isomorphic the simplicial cohomology of the resulting
complex. When, in addition, the set that is covered carries a measure,
then we can assign to every simplex in this construction a weight
equal to the measure of the corresponding intersection. We thus obtain
a weighted simplicial complex, and we can define a corresponding
Laplacian. Its spectrum then reflects the geometry of the intersection
pattern, and not only its topology. Since such intersection patterns
arise in many areas of application, for instance as colocalization
patterns of proteins in a cell \cite{} or for many geographical data
sets, we wish to propose this Laplacian spectrum as a new tool in data
analysis. This will be developed elsewhere, on the mathematical basis
of the present paper.

This paper is organized as follows.
In Section 2 we give the basic  definitions for  simplicial complexes
and recall Eckmann's discrete version of  the Hodge theorem.
We define the combinatorial Laplace operator in its full generality and provide explicit expressions.
Section 3  starts with the theorem about the number of zeros in the spectrum of the 
 the general Laplace operator.   We then discuss the effect of the scalar products on the spectrum
and obtain the upper  and lower bound on the maximum eigenvalue of the Laplacian. Finally,  we  state the definition of the \emph{normalized combinatorial Laplace operator}, which will be the main object of  the remainder of the paper.
We calculate  the spectrum of the normalized combinatorial Laplacian for some special classes of simplicial complexes  in Section 4. In particular, we discuss the spectrum of an $i$-simplex, of an orientable and a non-orientable circuit, of a path and of a star.
In Section 5 we analyse regular, pure simplicial complexes.
In Section 6 we discuss the effect of wedges, joins and duplication of motifs   on the spectrum of the normalized combinatorial Laplace operator.
In Section 7 we identify the combinatorial features of simplicial
complexes that cause the appearance of certain integer eigenvalues in
the spectrum of $\lup$. We discuss the occurrence of the eigenvalue
$i+2$ in the spectrum of $\lup$, and its connection to  the chromatic
number of the underlying graph of a complex. Furthermore, the relation among the eigenvalue $i+1$ and the duplication of vertices is established.

\section{Notations, definitions and  the combinatorial Laplace operator}

An \emph{abstract simplicial complex} $K$  on a finite set $V$ is a  collection of subsets of $V$,  which is closed under inclusion.
An $i$-face  or an $i$-simplex of $K$ is an element of cardinality $i+1$.
 $0$-faces are usually called  \emph{vertices} and $1$-faces \emph{edges}. 
 The collection of all $i$-faces of simplicial complex $K$ is denoted by $S_{i}(K)$.
The  \emph{dimension} of an $i$-face is $i$, and the 
 dimension  of  a complex   $K$ is the maximum  dimension of a    face in $K$. 
 The faces which are maximal under inclusion are called \emph{facets}. 
We say that a simplicial complex $K$ is \emph{pure} if all  facets have the same dimension.
Note that  there is a natural correspondence of hypergraphs and simplicial complexes in a natural way ( facet of a simplicial complex  corresponds to an edge in a hypergraph).
For two $(i+1)$- simplices sharing an $i$-face we use the term \emph{$i$-down neighbours}, and for two 
$i$-simplices which are faces of an $(i+1)$- simplex, we say that they are \emph{$(i+1)$-up neighbours}. 
We say that a face $F$ is \emph{oriented} if we chose an ordering on its vertices and  write  $[F]$. 
Two orderings of the vertices are said to determine \emph{the same orientation} if there is an even permutation transforming one ordering into the other. If the permutation is odd, then the orientations  are opposite.

In the remainder,  $K$ will be  an abstract simplicial complex  on a vertex set $[n]=\{1,2,\ldots, n\}$, 
 when not stated otherwise.
 The $i$-th chain group $C_{i}(K,\rb)$ of a complex $K$ with coefficients in $\rb$ is a vector space over field $\rb$ with  basis $B_{i}(K,\rb)=\{[F]\mid F \in S_{i}(K)\}$. 

The cochain groups $C^{i}(K,\rb)$ are defined as  duals of the chain groups,  i.e.
$C^{i}(K,\rb):=~\hom(K,\rb)$.  The  basis  of $C^{i}(K,\rb)$ is  given by the set of functions $\{e_{[F]}\mid [F] \in B_{i}(K,\rb))\}$ such that
$$e_{[F]}([F'])=\left\{  \begin{array}{ll}
1 & \textrm{ if } [F']=[F] \\
0 & \textrm{ otherwise. }
\end{array}
\right. $$
The functions  $e_{[F]}$ are also known as \emph{elementary cochains}.
 Traditionally, $C^{i}(K,G)$ for arbitrary group $G$, are called cochain groups. Influenced by this naming, 
we will refer to $C^{i}(K,\rb)$ as cochain \emph{groups}, although we
always keep in mind that the $C^{i}(K,\rb)$ have  
the structure of  \emph{ vector spaces}.
Note that the one-dimensional vector space  $C^{-1}(K,\rb) $  is
generated by  the identity function on the empty simplex.
We define   the  \emph{simplicial coboundary maps}
\begin{displaymath}
(\delta_{i}f)([v_{0},\ldots,v_{i+1}]) =\sum_{j=0}^{i+1}(-1)^{j}f([v_{0},\ldots,\hat{v}_{j}\ldots v_{i+1}]),
\end{displaymath}
where  $\hat{v}_{j}$ denotes that the vertex $v_{j}$ has been omitted.
The $\delta_i$ are the connecting maps in the
\emph{augmented cochain complex} of $K$ with coefficients in $\rb$,
i.e.,  the sequence of vector spaces  and linear transformations 
\begin{equation}
\label{cochain complex}
\ldots \xleftarrow{\delta_{i+1}} C^{i+1}(K,\rb) \xleftarrow{\delta_{i}}  C^{i}(K,\rb)
 \xleftarrow{\delta_{i-1}}  \ldots  \leftarrow C^{-1}(K,\rb) \leftarrow 0.
\end{equation}
Alternatively,  $\delta_{i}$ can be viewed as   the   dual of the boundary map $\partial_{i+1}$.
For a systematic treatment of  simplicial homology and cohomology the reader is referred to \cite{Hatcher}.
It is straightforward to check that   $\delta_{i}\delta_{i-1}=0$, ergo the image of $\delta_{i-1}$ is contained in the kernel of  $\delta_{i}$ and  the reduced cohomology group for every $i\geq 0$ is 
$$
\tilde{H}^{i}(K,\rb):=\ker \delta_{i}/\im \delta_{i-1}.
$$
After choosing   inner products $(\textrm{ } ,\textrm{ } )_{C^{i}}$
and $(\textrm{ } ,\textrm{ } )_{C^{i+1}}$  on  $C^{i}(K,\rb)$ and $
C^{i+1}(K,\rb)$, respectively,  the adjoint
$\delta_{i}^{*}:C^{i+1}(K,\rb)\rightarrow ~C^{i}(K,\rb)$  of the
coboundary operator $\delta_{i}$  is defined  by 
 $$
 (\delta_{i}f_{1},f_{2})_{C^{i+1}}=(f_{1},\delta_{i}^{*} f_{2})_{C^{i}},
 $$
for every $f_{1}\in C^{i}(K,\rb)$ and $f_{2}\in C^{i+1}(K,\rb)$.
\begin{defn}
We define the following three operators on $C^{i}(K,\mathbb{R})$:
\begin{enumerate}
\item[(i)] \emph{$i$-dimensional combinatorial up Laplace operator}  or simply $i$-up Laplace operator
$$
\mathcal{L}_{i}^{up}(K):=\delta_{i}^{*}\delta_{i},
$$
\item [(ii)]\emph{$i$-dimensional combinatorial down Laplace operator}  or  $i$-down Laplace operator
$$
\mathcal{L}_{i}^{down}(K):=\delta_{i-1}\delta_{i-1}^{*},
$$
\item [(iii)]\emph{$i$-dimensional combinatorial Laplace operator}  or $i$-Laplace operator
$$
\mathcal{L}_{i}(K):=  \mathcal{L}_{i}^{up}(K)+ \mathcal{L}_{i}^{down} (K)=\delta_{i}^{*}\delta_{i}+\delta_{i-1}\delta_{i-1}^{*}.
$$
\end{enumerate}
\end{defn}
Since 
\begin{equation*} 
C^{i+1}(K,\mathbb{R})
\begin{array}{l}
\underleftarrow{\delta_{i\textrm{ }\textrm{ }\textrm{ }}}\\ 
 \overrightarrow{ \delta^{*}_{i\textrm{ }\textrm{ }\textrm{ }}}
\end{array}
C^{i}(K,\mathbb{R}) \begin{array}{l}
\underleftarrow{\delta_{i-1}}\\ 
 \overrightarrow{ \delta^{*}_{i-1}}
\end{array}
 C^{i-1}(K,\mathbb{R}),
\end{equation*}
all three operators are well defined. Moreover, it follows directly from the definition  that $\mathcal{L}_{i}^{up}(K)$, $\mathcal{L}_{i}^{down}(K)$ and  $\mathcal{L}_{i}(K)$ are self-adjoint, non-negative and compact operators.
Hence  their eigenvalues are real, non-negative, and can be characterized by the Courant-Fischer-Weyl min-max principle.
\begin{thm}[Min-max theorem]
\label{min-max theorem}
Let $\mathcal{V}_{k}$  denote a $k$-dimensional subspace  of $V$, and assume $A:V\rightarrow V$ is a compact, self adjoint operator of a Hilbert space $V$. Then
\begin{equation}
\label{min-max equation}
\lambda_{k}=\min_{\mathcal{V}_{k}}\max_{g\in V_{k}}\frac{(Ag,g)}{(g,g)}=\max_{\mathcal{V}_{m-k+1}}\min_{g\in \mathcal{V}_{m-k+1}}\frac{(Ag,g)}{(g,g)},
\end{equation}
where $\lambda_{1}\leq \ldots \leq \lambda_{m}$ are the eigenvalues of  $A$.
\end{thm}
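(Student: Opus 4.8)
The plan is to deduce both equalities from the spectral theorem for compact self-adjoint operators. Since $A$ is compact, self-adjoint and non-negative, there is an orthonormal basis $\{v_{1},\ldots,v_{m}\}$ of $V$ consisting of eigenvectors, $Av_{j}=\lambda_{j}v_{j}$, indexed so that $\lambda_{1}\leq\cdots\leq\lambda_{m}$. Writing an arbitrary $g=\sum_{j}c_{j}v_{j}$, the Rayleigh quotient becomes the weighted average
$$
\frac{(Ag,g)}{(g,g)}=\frac{\sum_{j}\lambda_{j}c_{j}^{2}}{\sum_{j}c_{j}^{2}},
$$
so its value always lies between $\lambda_{1}$ and $\lambda_{m}$ and is controlled entirely by which coordinates $c_{j}$ are permitted to be nonzero. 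This identity is the computational engine for everything that follows.

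To establish the first equality $\lambda_{k}=\min_{\mathcal{V}_{k}}\max_{g\in\mathcal{V}_{k}}(Ag,g)/(g,g)$, I would prove the two inequalities separately. For the upper bound I take the specific test subspace $\mathcal{V}_{k}=\operatorname{span}(v_{1},\ldots,v_{k})$; on it every $g$ has $c_{j}=0$ for $j>k$, whence the weighted average above is at most $\lambda_{k}$, with equality attained at $g=v_{k}$. Hence the maximum over this particular subspace equals $\lambda_{k}$, and the minimum over all $k$-dimensional subspaces is therefore $\leq\lambda_{k}$.

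For the matching lower bound I would use a dimension count. Given any $k$-dimensional subspace $W$, the subspace $U=\operatorname{span}(v_{k},\ldots,v_{m})$ has dimension $m-k+1$, so $\dim W+\dim U=m+1>m$, and the intersection $W\cap U$ contains a nonzero vector $g$. Expanding $g=\sum_{j\geq k}c_{j}v_{j}$ shows that its Rayleigh quotient is $\geq\lambda_{k}$, so the maximum over $W$ is $\geq\lambda_{k}$ for every choice of $W$; taking the minimum over $W$ gives $\geq\lambda_{k}$. Combining the two bounds yields the first equality.

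Finally, the second equality is obtained by applying the same argument to $-A$, whose eigenvalues are $-\lambda_{m}\leq\cdots\leq-\lambda_{1}$: the $k$-th smallest eigenvalue $\lambda_{k}$ of $A$ corresponds to the $(m-k+1)$-th position for $-A$, and the min-max formula for $-A$ translates directly into the max-min formula for $A$. The only genuinely nontrivial step is the lower bound, where the forced transversality of $W$ and $U$ — a consequence purely of their dimensions — is what pins the Rayleigh quotient from below; everything else is bookkeeping with the weighted-average identity.
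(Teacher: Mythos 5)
Your proof is correct. Note, however, that the paper itself gives no proof of this statement: it is quoted as the classical Courant--Fischer--Weyl min-max principle and used as a black box, so there is nothing to compare against. Your argument (spectral decomposition, the Rayleigh-quotient weighted-average identity, the test subspace $\operatorname{span}(v_{1},\ldots,v_{k})$ for the upper bound, the dimension-count $\dim W+\dim U=m+1>m$ for the lower bound, and passage to $-A$ for the max-min form) is the standard textbook proof and is sound in the finite-dimensional setting in which the paper applies the theorem; the only cosmetic slips are that you assume non-negativity of $A$, which is neither hypothesized nor needed, and that your reliance on a finite eigenbasis implicitly uses $\dim V=m<\infty$, which is consistent with the paper's statement but worth flagging since ``compact operator on a Hilbert space'' suggests more generality than the argument covers.
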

For any operator $A$ acting on a Hilbert space, we denote the weakly increasing rearrangement of its eigenvalues by $\s(A)=(\lambda_{0},\ldots,\lambda_{m})$ and write 
 $\s(A)\dote \s(B)$, when the multisets $\s(A)$ and $ \s(B)$ differ
 only in their multiplicities of zero (this is is an equivalence relation). We denote a union of multisets by  $\dotu$. 

We now state 
 the discrete version of the Hodge theorem and provide its proof for the sake of completeness.
\begin{thm}[Eckmann 1944 ]
 For an abstract simplicial complex $K$, 
 \begin{displaymath}
  \ker \mathcal{L}_{i}(K)\cong \tilde{H}^{i}(K,\rb).
 \end{displaymath}
\end{thm}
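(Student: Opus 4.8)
The plan is to run the standard Hodge-theoretic argument, adapted to this finite-dimensional combinatorial setting. The goal is to identify the quotient $\ker \delta_{i}/\im \delta_{i-1}$ with the space of \emph{harmonic cochains} $\mathcal{H}^{i}:=\ker \mathcal{L}_{i}(K)$, and then exhibit the isomorphism by sending each cohomology class to its unique harmonic representative.

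First I would show that a cochain is harmonic precisely when it is simultaneously closed and co-closed, i.e.
$$
\ker \mathcal{L}_{i}(K)=\ker \delta_{i}\cap \ker \delta_{i-1}^{*}.
$$
This follows from the computation
$$
(\mathcal{L}_{i}f,f)=(\delta_{i}^{*}\delta_{i}f,f)+(\delta_{i-1}\delta_{i-1}^{*}f,f)=(\delta_{i}f,\delta_{i}f)+(\delta_{i-1}^{*}f,\delta_{i-1}^{*}f),
$$
obtained by moving the adjoints across the inner products. Since $\mathcal{L}_{i}(K)$ is self-adjoint and non-negative, $\mathcal{L}_{i}f=0$ is equivalent to $(\mathcal{L}_{i}f,f)=0$, which forces both non-negative summands to vanish and hence $\delta_{i}f=0$ and $\delta_{i-1}^{*}f=0$; the converse inclusion is immediate from the definition of $\mathcal{L}_{i}(K)$.

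Next I would establish the orthogonal Hodge decomposition
$$
C^{i}(K,\rb)=\im \delta_{i-1}\oplus \mathcal{H}^{i}\oplus \im \delta_{i}^{*}.
$$
In finite dimensions this is pure linear algebra: for any linear map $T$ between inner product spaces, the target splits orthogonally as $\im T\oplus \ker T^{*}$ and the source as $\ker T\oplus \im T^{*}$. Applying this to $\delta_{i}$ gives $C^{i}=\ker \delta_{i}\oplus \im \delta_{i}^{*}$, and applying it to $\delta_{i-1}$ gives $C^{i}=\im \delta_{i-1}\oplus \ker \delta_{i-1}^{*}$. The two images are mutually orthogonal, since $(\delta_{i-1}a,\delta_{i}^{*}b)=(\delta_{i}\delta_{i-1}a,b)=0$ because $\delta_{i}\delta_{i-1}=0$. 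Combining these two splittings with the first step identifies the middle summand as exactly $\ker \delta_{i}\cap \ker \delta_{i-1}^{*}=\mathcal{H}^{i}$.

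Finally I would restrict the decomposition to $\ker \delta_{i}$. As $\im \delta_{i-1}\subseteq \ker \delta_{i}$ and $\mathcal{H}^{i}\subseteq \ker \delta_{i}$, while the remaining summand meets the kernel trivially, one reads off
$$
\ker \delta_{i}=\im \delta_{i-1}\oplus \mathcal{H}^{i},
$$
and therefore
$$
\tilde{H}^{i}(K,\rb)=\ker \delta_{i}/\im \delta_{i-1}\cong \mathcal{H}^{i}=\ker \mathcal{L}_{i}(K).
$$
The argument is uniform in $i\geq 0$ thanks to the augmented complex, so no separate treatment of the bottom degree is needed. The only conceptual step is the Hodge decomposition, but here finite-dimensionality makes it elementary; I expect the essential content to be nothing more than the orthogonality relations coming from $\delta_{i}\delta_{i-1}=0$ together with the definition of the adjoint, so I do not anticipate a serious obstacle.
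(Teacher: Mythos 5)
Your proof is correct and takes essentially the same route as the paper's: both identify $\ker \mathcal{L}_{i}(K)$ with $\ker \delta_{i}\cap \ker \delta_{i-1}^{*}$, i.e.\ with the orthogonal complement of $\im \delta_{i-1}$ inside $\ker \delta_{i}$, and hence with the quotient $\tilde{H}^{i}(K,\rb)$. Your quadratic-form argument $(\mathcal{L}_{i}f,f)=\lVert\delta_{i}f\rVert^{2}+\lVert\delta_{i-1}^{*}f\rVert^{2}$ is a transparent justification of the step the paper derives from the inclusions $\im\mathcal{L}_{i}^{down}\subset\ker\mathcal{L}_{i}^{up}$ and $\im\mathcal{L}_{i}^{up}\subset\ker\mathcal{L}_{i}^{down}$, and the full three-term Hodge decomposition you insert is a harmless elaboration rather than a different method.
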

\begin{proof}
Since $\delta_{i}\delta_{i-1}=0$ and $\delta_{i-1}^{*}\delta_{i}^{*}=0$, then 
\begin{align}
&\im\mathcal{L}_{i}^{down}(K) \subset  \ker\mathcal{L}_{i}^{up}(K)\label{hodge},\\
&\im \mathcal{L}_{i}^{up}(K) \subset  \ker \mathcal{L}_{i}^{down}(K)\label{hodge1}.
\end{align}
Thus,
\begin{align*}
  \ker \mathcal{L}_{i}(K)&=\ker\delta^{*}_{i}\delta_{i}\cap   \ker\delta_{i-1}\delta^{*}_{i-1}\\
&= \ker \delta_{i} \cap \ker \delta^{*}_{i-1}\\
&=  \ker \delta_{i} \cap (\im \delta_{i-1})^{\perp}\\
&\cong \tilde{H}^{i}(K,\rb).
\end{align*}
\end{proof}
Due to (\ref{hodge}) and  (\ref{hodge1}),
$\lambda$ is a non-zero eigenvalue of $\mathcal{L}_{i}(K)$ if and only if it is an eigenvalue of $\mathcal{L}^{up}_{i}(K)$ or  $\mathcal{L}^{down}_{i}(K)$.
Therefore,
\begin{equation}
\label{com1}
\s(\mathcal{L}_{i}(K))\dote \s(\mathcal{L}_{i}^{up}(K))\dotu \s(\mathcal{L}_{i}^{down}(K)).
\end{equation}
As a direct consequence of the fact that $\s(AB)\dote \s(BA)$, for
operators $A$ and $ B$  on suitably chosen Hilbert spaces, we get the
following equality, which was pointed out to us by Johannes Rauh. 
\begin{equation}
\label{com2}
\s(\mathcal{L}_{i}^{up}(K))\dote\s(\mathcal{L}_{i+1}^{down}(K)).
\end{equation}
From (\ref{com1}) and (\ref{com2}) we conclude that each of  the three families of multisets 
$$\{\s(\mathcal{L}_{i}(K))\mid -1\leq i \leq d\}\textrm{,}\; \{\s(\mathcal{L}_{i}^{up}(K))\mid -1\leq i \leq d\} \;\textrm{or}\; \{\s(\mathcal{L}_{i}^{down}(K))\mid 0\leq i \leq d\}$$
determines the other two. Therefore, it suffices to consider only one
of them.
In the remainder of the paper, we will omit the argument $K$ in $\s(\mathcal{L}_{i}(K))$, 
$\s(\mathcal{L}_{i}^{up}(K))$, $\mathcal{L}_{i}^{up}(K)$, $\mathcal{L}_{i}^{down}(K)$, $S_{i}(K)$ etc  when   
it is clear which simplicial complex we investigate or when we state our results for a general simplicial complex $K$.

For explicit expressions for up and down Laplacians, we have to fix  scalar products on the cochain groups. To that end, we introduce the weight function and additional notation.
\begin{defn}
The  \emph{ weight function} $w$ is an evaluation function on the set of all faces of $K$
 $$w:\bigcup_{i=-1}^{\dim K}S_{i}(K)\rightarrow \rb^{+}.$$ 
The \emph{weight of a face} $F$ is $w(F)$.
\end{defn}
For any choice of the inner product on  the space  $C^{i}(K,\rb)$, where elementary cochains form an orthogonal basis,  there exists a weight function $w$, such that
$$
(f,g)_{C^{i}}=\sum_{F\in S_{i}(K)} w(F)f([F])g([F]).
$$ 
Furthermore, there is  a one-to-one correspondence between   weight functions  and possible scalar products on cochain groups $C^{i}(K,\rb)$, such that elementary cochains are orthogonal. In the remainder we will interchangeably use the terms weights, weight function and scalar product.
\begin{defn}
Let $\bar{F}=\{v_{0},\ldots, v_{i+1}\}$ be an $(i+1)$-face of a complex $K$ and let $F=\{v_{0},\ldots,\hat{v_{k}},\ldots,v_{i+1}\}$ be an $i$-face of $\bar{F}$.   Then \emph{the boundary of the oriented face} $[\bar{F}]$ is
$$
\partial [\bar{F}]= \sum_{k}(-1)^{k}[v_{0},\ldots,\hat{v_{k}},\ldots,v_{i+1}],
$$
and the sign of  $[F]$ in the boundary of $[\bar{F}]$ is denoted by $\sgn([F],\partial [\bar{F}])$ and is equal to $(-1)^{k}$.
 \end{defn}
By abuse of notation,   we will  write $\partial \bar{F}$ to denote  the set of all $i$-faces of $\bar{F}$.
The  $i$-up Laplace operator is given by
\small
$$
(\mathcal{L}_{i}^{up} f)([F])= \sum_{\substack{\bar{F}\in S_{i+1}:\\ F\in\partial \bar{F}}} \frac{ w(\bar{F})}{w(F)} f([F]) 
+   \sum_{\substack{F'\in S_{i}: F\neq F',\\ F,F'\in\partial  \bar{F}}} \frac{w(\bar{F})}{w(F)}\sgn([F],\partial [\bar{F}])\sgn([F'],\partial [\bar{F}])f([F']),
$$
\normalsize
and  the expression for the $i$-down  Laplace operator is 
\small
$$
(\mathcal{L}^{down}_{i} f)([F])= \sum_{E\in \partial F}\frac{w(F)}{w(E)}f([F])+ \sum_{F': F\cap F'=E}\frac{w(F')}{w(E)}
 \sgn([E],\partial [F])\sgn([E],\partial [F']) f([F']).
$$
\normalsize
When dealing with linear operators  it is often more 
convenient to study their matrix form. Hence we give the following expressions for  
 the  $(e_{[F]},e_{[F']})$-th and the $(e_{[F]},e_{[F]})$-th entry of $\mathcal{L}^{up}_{i}$ and $\mathcal{L}^{down}_{i}$, where   $F\neq F'$
\begin{align*}
 & (\mathcal{L}^{up}_{i} )_{(e_{[F]},e_{[F']})}=\sgn([F],\partial [\bar{F}])\sgn([F'],\partial [\bar{F}]) \frac{w(\bar{F})}{w(F)},\\
& (\mathcal{L}^{up}_{i} )_{(e_{[F]},e_{[F]})}=\sum_{\substack{\bar{F}\in S_{i+1},\\ F\in\partial \bar{F}}} \frac{ w(\bar{F})}{w(F)},\\
& (\mathcal{L}^{down}_{i} )_{(e_{[F]},e_{[F']})}=\sgn([E],\partial [F])\sgn([E],\partial [F']) \frac{w(F')}{w(E)},\\
 &(\mathcal{L}^{down}_{i} )_{(e_{[F]},e_{[F]})}=\sum_{E\in \partial F}\frac{w(F)}{w(E)}.
\end{align*}
The Laplace operator $\mathcal{L}$  of a simplicial complex $K$ is  uniquely determined by a weight function $w_{K}$ on the faces of $K$.
Thus, we write $\mathcal{L}(K,w_{k})$.
\begin{rem}
\label{pure sc dependencies}
From the explicit expressions of Laplace operators it is evident that 
$\mathcal{L}^{up}_{i}$ is uniquely determined by its restriction on
the $(i+1)$-skeleton of  $K$, whereas $\mathcal{L}^{down}_{i}$ is determined by its  $i$-skeleton.
Therefore, when studying  $\mathcal{L}^{up}_{i}$ (or $\mathcal{L}^{down}_{i}$), it suffices to observe  pure $(i+1)$(or $i$)-simplicial complexes.
\end{rem}
Let $D_{i}$  be the matrix corresponding to the operator $\delta_{i}$, $D_{i}^{T}$ its  transpose and
$W_{i}$ the diagonal matrix representing the scalar product on $C^{i}$, then the $\mathcal{L}^{up}_{i}$  and $\mathcal{L}^{down}_{i}$ operators are expressed as
$$
\mathcal{L}^{up}_{i}=W_{i}^{-1}D_{i}^{T}W_{i+1}D_{i},
$$
and
$$
\mathcal{L}^{down}_{i}=D_{i-1}W_{i-1}^{-1}D_{i-1}^{T}W_{i},
$$
respectively.
Therefore,  the combinatorial Laplace operator analysed by Duval,
Reiner \cite{DuvalShifted}, Friedmann \cite{Friedman} and others
\cite{Muhammad},\cite{Dong}, is  the combinatorial Laplace operator
$\mathcal{L}_{i}$ for the identity matricesas weight matrices  $W_{i}$ ($-1\leq i \leq \dim K$), i.e. $\mathcal{L}_{i}(K,w_{K})$, where $w_{K}\equiv 1$.
In the remainder of the paper, this version of 
the Laplace operator will be denoted by  $L_{i}$. The  graph Laplacian  (\ref{GL}) studied by Kirchhoff \cite{Kirchhoff}, Fiedler \cite{Fiedler1973}, Grone and Merris \cite{GroneMerris} and  many others is a special case of $L_{i}$, in fact it is equal to $L_{0}^{up}$.
The \emph{normalized graph Laplace operator} (\ref{NGL}) investigated
by  Chung, Yau, Grigoryan and others, see \cite{Chung1996} and
\cite{Jost},   is equal to $\mathcal{L}^{up}_{0}$ for  $W_{1}$  being
the  matrix with diagonal entries equal to the edge weights and
$W_{0}$ the diagonal degree matrix, that is 
the weight function on a vertex $v$ is $w(v)=\deg v$.

Therefore, the combinatorial Laplacian $\mathcal{L}(K,w_{K})$, as
defined here, unifies all Laplace operators studied so far and
provides a general framework for a systematic study of different versions of Laplacians. 

Our goal in this paper is to  define  the  higher dimensional analogue
of the normalized graph Laplacian and to investigate its properties. 
However, we will  state our results in full generality  whenever possible,  and emphasize which results do not depend  on the choice of the scalar products, and  which  are the consequence
of suitably chosen weights.

 \section{The normalized combinatorial Laplacian: definition and basic properties}
In this section we derive an upper and a lower bound for the maximal
eigenvalue of $\mathcal{L}^{up}_{i}$, 
introduce the normalized combinatorial Laplacian $\lup$, and state and prove its basic properties.
We emphasize its advantages compared to the other choices of weights.

Let $\lambda_{m}$ and $\lambda_{0}$   be the maximl and the minimal
eigenvalue  of  $\mathcal{L}_{i}^{up}(K,w_{K})$, respectively. As  the
Laplace operator is positive definite,  $\lambda_{0}$ is always larger or equal to zero. 
The exact number of zero eigenvalues in the spectrum of 
$\mathcal{L}_{i}^{up}$ and $\mathcal{L}_{i}^{down}$ is given in the following  theorem.
\begin{thm}
\label{thm zeros}
 The multiplicity of the eigenvalue zero in 
\begin{itemize}
\item[(i)]  $\s(\mathcal{L}_{i}^{up})$ is 
 \begin{equation*}
 \dim C^{i}-\sum_{j=0}^{i}(-1)^{i+j}(\dim C^{j}-\dim \tilde{H}^{j}),
\end{equation*}
or equivalently
 \begin{equation*}
 \label{other zeros}
\dim C^{i} + \sum_{j=1}^{d-i}(-1)^{j}(\dim C^{i+j}-\dim \tilde{H}^{i+j}).
\end{equation*}
\item[(ii)] $\s(\mathcal{L}_{i}^{down})$ is
 \begin{equation*}
\dim \tilde{H}^{i}-\sum_{j=0}^{i-1}(-1)^{i+j-1}(\dim C^{j}-\dim \tilde{H}^{j}).
\end{equation*}
\end{itemize} 
\end{thm}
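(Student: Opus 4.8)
The plan is to translate the whole statement into the language of ranks and nullities of the coboundary maps, and then to solve a single recurrence. First I would record the two elementary identities $\ker\mathcal{L}_{i}^{up}=\ker\delta_{i}$ and $\ker\mathcal{L}_{i}^{down}=\ker\delta_{i-1}^{*}=(\im\delta_{i-1})^{\perp}$; the first holds because $(\delta_{i}^{*}\delta_{i}f,f)_{C^{i}}=(\delta_{i}f,\delta_{i}f)_{C^{i+1}}$, so $\mathcal{L}_{i}^{up}f=0$ forces $\delta_{i}f=0$, and the second is the analogous statement for $\mathcal{L}_{i}^{down}=\delta_{i-1}\delta_{i-1}^{*}$. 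Consequently the multiplicity of the eigenvalue $0$ in $\s(\mathcal{L}_{i}^{up})$ equals $\dim C^{i}-\rank\delta_{i}$, and in $\s(\mathcal{L}_{i}^{down})$ it equals $\dim C^{i}-\rank\delta_{i-1}$.

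Writing $r_{j}:=\rank\delta_{j}$, the task is now to express $r_{i}$ and $r_{i-1}$ through the data $\dim C^{j}$ and $\dim\tilde{H}^{j}$. I would combine the rank--nullity theorem, $\dim\ker\delta_{j}=\dim C^{j}-r_{j}$, with the definition $\dim\tilde{H}^{j}=\dim\ker\delta_{j}-\dim\im\delta_{j-1}=\dim\ker\delta_{j}-r_{j-1}$, to obtain the recurrence
\[
r_{j}+r_{j-1}=\dim C^{j}-\dim\tilde{H}^{j}.
\]
This first-order recurrence is then solved by telescoping. Anchoring it at the top of the augmented complex, where $\delta_{d}\colon C^{d}\to C^{d+1}=0$ gives $r_{d}=0$, and summing downward produces the closed form $r_{i}=\sum_{j=i+1}^{d}(-1)^{\,j-i-1}(\dim C^{j}-\dim\tilde{H}^{j})$. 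Substituting this into $\dim C^{i}-r_{i}$ yields the second (upward) expression in part (i) directly; the first (downward) expression then follows by re-summing the same recurrence from the bottom end and using the reduced Euler--Poincar\'e identity $\sum_{j}(-1)^{j}\dim C^{j}=\sum_{j}(-1)^{j}\dim\tilde{H}^{j}$ for the augmented complex to convert one alternating sum into the other.

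For part (ii) I would avoid a second telescoping and instead invoke the Hodge decomposition underlying Eckmann's theorem: the inclusions (\ref{hodge}) and (\ref{hodge1}), together with self-adjointness, give the orthogonal splitting $C^{i}=\ker\mathcal{L}_{i}\oplus\im\delta_{i-1}\oplus\im\delta_{i}^{*}$ with $\dim\ker\mathcal{L}_{i}=\dim\tilde{H}^{i}$. Taking dimensions and using $\rank\delta_{i}^{*}=\rank\delta_{i}$ gives $\dim C^{i}=\dim\tilde{H}^{i}+r_{i-1}+r_{i}$, hence
\[
\dim\ker\mathcal{L}_{i}^{down}=\dim C^{i}-r_{i-1}=\dim\tilde{H}^{i}+\bigl(\dim C^{i}-\dim\ker\mathcal{L}_{i}^{up}\bigr),
\]
so (ii) drops out of (i) by one substitution. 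I expect the only genuinely delicate point to be the bookkeeping at the base of the recurrence: because the complex is augmented by $C^{-1}$ (with $\dim C^{-1}=1$ and $r_{-1}=1$ for nonempty $K$), the two anchorings of the recurrence differ by a boundary term, and it is precisely the correct handling of this augmentation --- equivalently, the choice of reduced rather than unreduced cohomology --- that fixes the signs and the constant in the downward-summed formulas. Everything else is routine linear algebra.
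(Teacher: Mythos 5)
Your proposal is correct in substance and its skeleton is the same as the paper's: the recurrence $r_{j}+r_{j-1}=\dim C^{j}-\dim\tilde{H}^{j}$ is exactly the dimension count the paper extracts from its two split short exact sequences, and both arguments then telescope it. The genuine differences are in execution. You anchor the telescoping at the top, $r_{d}=0$, which yields the second displayed expression of (i) directly and without any base-case subtlety; the paper anchors at the bottom, reading off the first expression, and then obtains (ii) by computing $\dim C^{i}-\rank\delta_{i-1}$ directly, whereas you deduce (ii) from (i) through the orthogonal (Hodge) decomposition $C^{i}=\ker\mathcal{L}_{i}\oplus\im\delta_{i-1}\oplus\im\delta_{i}^{*}$, i.e. $\dim\ker\mathcal{L}_{i}^{down}=\dim\tilde{H}^{i}+\dim C^{i}-\dim\ker\mathcal{L}_{i}^{up}$. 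Both routes are legitimate; yours buys safety at the bottom of the complex, and that matters here.

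Indeed, the ``delicate point'' you flag is not pedantry: it is precisely where the paper's own computation goes astray. Since the complex is augmented, $r_{-1}=\rank\delta_{-1}=1$ for nonempty $K$, but the paper's closed form $\dim\im\delta_{i}=\sum_{j=0}^{i}(-1)^{i+j}(\dim C^{j}-\dim\tilde{H}^{j})$ tacitly sets $r_{-1}=0$; the correct bottom-anchored sum carries an extra $(-1)^{i+1}$. Consequently, with $\tilde{H}$ denoting reduced cohomology, the first expression in (i) is off by $(-1)^{i}$, and the two expressions in (i) are \emph{not} equivalent: the reduced Euler--Poincar\'e identity holds only with the $j=-1$ terms included, so $\sum_{j\geq 0}(-1)^{j}(\dim C^{j}-\dim\tilde{H}^{j})=1$ rather than $0$, and the two displays differ by exactly $(-1)^{i}$. (Sanity check: for $K$ a single point and $i=0$, $\mathcal{L}_{0}^{up}$ is the zero operator on a one-dimensional space; your top-anchored formula gives multiplicity $1$, the paper's first formula gives $0$.) Your Hodge step likewise exposes that (ii) as stated cannot be consistent with (i): it produces the leading term $\dim C^{i}$ --- which is what the paper's own proof derives --- not the stated $\dim\tilde{H}^{i}$, again up to the same augmentation term. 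So be aware that your step ``re-sum from the bottom and convert via Euler--Poincar\'e'' cannot reproduce the stated downward formulas verbatim; executed correctly it \emph{corrects} them (equivalently: the stated (i) becomes correct if $\tilde{H}$ is read as unreduced cohomology, and (ii) additionally needs $\dim\tilde{H}^{i}$ replaced by $\dim C^{i}$). What survives exactly as stated are the parts your plan proves cleanly: the second expression in (i) and the relation $\dim\ker\mathcal{L}_{i}^{down}=\dim\tilde{H}^{i}+\dim C^{i}-\dim\ker\mathcal{L}_{i}^{up}$.
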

\begin{proof}
The following  are short exact sequences  that split
\begin{displaymath}
 0\rightarrow\ker\delta_{i}\rightarrow C^{i}\rightarrow \im\delta_{i}\rightarrow 0,
\end{displaymath}
\begin{displaymath}
 0\rightarrow\im\delta_{i-1}\rightarrow\ker\delta_{i}\rightarrow \tilde{H}^{i}\rightarrow 0.
\end{displaymath}
This is a direct consequence of   the fact that $\im\delta_{i}$ and $\tilde{H}^{i}$ are projective modules (for details on projective modules and 
splitting exact sequences the reader is referred to \cite{Cohn}).
Therefore,
\begin{equation}
\label{zeros Ci}
 \dim C^{i}= \dim \ker\delta_{i} + \dim \im\delta_{i},
\end{equation}
and
\begin{equation}
\label{zeros keri}
\dim \ker\delta_{i} =  \dim \tilde{H}^{i}+ \dim \im\delta_{i-1}.
\end{equation}
From (\ref{zeros Ci}) and (\ref{zeros keri}) 
$$
\dim\im \delta_{i}=\sum_{j=0}^{i}(-1)^{i+j}(\dim C^{j}-\dim \tilde{H}^{j}).
$$
The number of zeros in the spectrum of $\mathcal{L}_{i}^{up}$ is equal to  the dimension of its kernel, thus
\begin{align*}
 \dim \ker \mathcal{L}_{i}^{up}&=  \dim \ker \delta_{i}\\
&=  \dim C^{i} - \sum_{j=0}^{i}(-1)^{j+i}(\dim C^{j}-\dim \tilde{H}^{j}).
\end{align*}
The expression (\ref{other zeros}) for the number of zeros  in $\s(\mathcal{L}_{i}^{up})$ is easily obtained
 by using the Euler characteristic and the equality 
$\chi=\sum_{j=-1}^{d}(-1)^{i}\dim C^{i}=\sum_{j=-1}^{d}(-1)^{i}\dim \tilde{H}^{i}$.
As for   $\mathcal{L}_{i}^{down}$, the following holds
 \begin{align*}
 \dim \ker \mathcal{L}_{i}^{down}&=  \dim \ker \delta^{*}_{i-1}=\dim C^{i}-\dim\im \delta_{i-1}\\
&=  \dim C^{i} -\sum_{j=0}^{i-1}(-1)^{j+i-1}(\dim C^{j}-\dim \tilde{H}^{j}).
\end{align*}
\end{proof}
The number of zero eigenvalues  in   spectra of various Laplace
operators, as expected, does not depend on a choice of the scalar
products on the cochain vector spaces.
\begin{rem}
If a simplicial complex is $(i+1)$-dimensional, then the number of zero eigenvalues in the spectrum of 
$\mathcal{L}^{up}_{i}(K)$ is $\dim C^{i}-\dim C^{i+1}+\dim \tilde{H}^{i+1}$, whereas 
there are exactly $\dim C^{i+1}-\dim \tilde{H}^{i+1}+\dim \tilde{H}^{i}$ zeros in the spectrum of $\mathcal{L}^{down}_{i+1}$.
\end{rem}

Next we introduce the  degree of  a simplex $F$.
\begin{defn}
\emph{The degree } of an $i$-face $F$ of $K$ is equal to the sum of
the weights of all simplices that contain $F$ in its boundary, i.e.
\begin{equation*}
\deg F= \sum_{\bar{F}\in S_{i+1}(K): F\in \partial \bar{F}}w(\bar{F}).
\end{equation*}
\end{defn}

The upper bound on $\s(\mathcal{L}_{i}^{up})$ follows from the subsequent discussion.
We have
\begin{subequations} \begin{align}
(\mathcal{L}^{up}_{i} f,f)&= (\delta_{i}f,\delta_{i}f)\\
&=(\sum_{\bar{F}\in S_{i+1}(K)}f(\partial [\bar{F}])\e_{[\bar{F}] },\sum_{\bar{F}\in S_{i+1}(K)}f(\partial [\bar{F}])\e_{[\bar{F}] })\\
&=\sum_{\bar{F}\in S_{i+1}(K)}f(\partial [\bar{F}])^{2}w(\bar{F})\\
\label{last} &\leq (i+2)\sum_{F\in S_{i}(K)} f([F])^{2}\sum_{\bar{F}\in S_{i+1}(K):F\in \partial \bar{F}}w(\bar{F}),
\end{align}
\end{subequations}
where (\ref{last}) is obtained by using  the Cauchy-Schwarz inequality.
In terms of degrees the  last inequality  can be restated  as
\begin{equation}
\label{ineq0}
(\mathcal{L}^{up}_{i} f,f) \leq (i+2)\sum_{F\in S_{i}(K)} f([F])^{2}\deg F.
\end{equation}
By dividing   (\ref{ineq0}) by$(f,f)$ we get
\begin{equation}
\label{ineq1}
\frac{(\mathcal{L}^{up}_{i} f,f)}{(f,f)}\leq (i+2)\frac{\sum_{F\in S_{i}(K)} f([F])^{2}\deg F }{\sum_{F\in S_{i}(K)}f([F])^{2}w( F)}.
\end{equation}
Replacing $f$ in (\ref{ineq1}) with the eigenfunction $f_{m}$, corresponding to the largest eigenvalue $\lambda_{max}$ of $\mathcal{L}_{i}^{up}$ gives
\begin{equation}
\label{ineq2}
\lambda_{max}\leq (i+2)\frac{\sum_{F\in S_{i}(K)} f_{m}([F])^{2}\deg F }{\sum_{F\in S_{i}(K)}f_{m}([F])^{2}w(F)}.
\end{equation}
Therefore,   if
\begin{equation}
\label{normalizing condition}
w(F)=\deg F,
\end{equation}
for every $F\in S_{i}(K)$,
then $\lambda_{max}\leq i+2$ and the eigenvalues of $\mathcal{L}^{up}_{i}$ are in the interval $[0,i+2]$.
\begin{defn}
Let $w$ be a weight function on $K$ which satisfies (\ref{normalizing condition}) for every face of simplicial complex $K$, which is not a  facet ($\dim F<\dim K$), then 
 the Laplace operator defined  on the cochain complex of $K$  is called 
the \emph{weighted normalized combinatorial Laplace operator}.
If additionally, the weights of the facets of $K$ are equal to $1$, then  the obtained operator is called  
the \emph{ normalized combinatorial Laplace operator} and is denoted
by $\lup$. We will keep the same notation for the weighted normalized
combinatorial Laplacian, emphasizing that we are considering its weighted version.
\end{defn}
If  (\ref{normalizing condition}) does not hold, 
 we  derive a bound on the maximal eigenvalue of the  Laplacian $\mathcal{L}^{up}_{i}$  from the inequality (\ref{ineq2}), i.e.
\begin{equation}
\label{weighted max bound}
\lambda_{m} \leq  (i+2) \frac{\max_{F\in S_{i}(K)}\deg F}{\min_{F\in S_{i}(K)}w(F)}.
\end{equation}
Here  $\min_{F\in S_{i}(K)}w(F)$ stands for the minimal \emph{non-zero} weight over all $i$-faces $F$ of  $K$.
The  inequality (\ref{weighted max bound}) in the case of the combinatorial Laplacian $L_{i}^{up}$    reduces to 
\begin{equation}
\label{bound}
\lambda_{m} \leq (i+2) \max_{F\in S_{i}(K)}\deg F,
\end{equation} 
which for $i=0$  becomes exactly  
$$
\lambda_{m}\leq 2 \max_{v\in S_{0}(G)}\deg v.\\
$$
 This is the well-known bound    on the maximal eigenvalue of $L_{0}^{up}$ (see \cite{Anderson}).
Another upper bound of the spectrum of $L_{i}^{up}$ was obtained by  Duval and Reiner in \cite{DuvalShifted}  as a part of more general study, i.e.
\begin{equation}
\label{Duval max est}
\lambda_{m}\leq n,
\end{equation}
where $n$ is the number of vertices of the  complex $K$.
The inequality (\ref{bound}) is  sharper than (\ref{Duval max est}) for large values of $n$ and small values of $i$. In particular, 
if $\max_{F\in S_{i}}\deg F < n /(i+2)$, then the estimate (\ref{bound}) is sharper, otherwise it is  (\ref{Duval max est}).
We sum up our results in the following theorem.
\begin{thm}
The  spectrum of $\mathcal{L}_{i}^{up}$ is bounded from above by: 
\begin{itemize}
\item[(i)] $i+2$, if  $\mathcal{L}_{i}^{up}=\lup$,
\item[(ii)] $(i+2) \max_{F\in S_{i}(K)}\deg F$, if  $\mathcal{L}_{i}^{up}=L_{i}^{up}$,
\item[(iii)] $(i+2) \max_{F\in S_{i}(K)}\deg F/\min_{F\in S_{i}(K)}w(F)$, for all other choices of  scalar products.
\end{itemize}
\end{thm}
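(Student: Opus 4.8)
The plan is to read off all three bounds from a single variational estimate, namely inequality (\ref{ineq2}), which has effectively already been assembled in the discussion preceding the statement. First I would invoke the min-max theorem (Theorem \ref{min-max theorem}) to write the largest eigenvalue as a Rayleigh quotient,
$$
\lambda_{m}=\max_{f}\frac{(\mathcal{L}_{i}^{up}f,f)}{(f,f)},
$$
so that bounding $\lambda_{m}$ amounts to bounding this quotient uniformly in $f$. Each of the claims (i)--(iii) then follows by substituting the maximizing eigenfunction $f_{m}$ into (\ref{ineq2}) and estimating the ratio
$$
Q(f_{m})=\frac{\sum_{F\in S_{i}(K)}f_{m}([F])^{2}\deg F}{\sum_{F\in S_{i}(K)}f_{m}([F])^{2}w(F)}
$$
under the three different hypotheses on the weights.

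The heart of the argument is the Cauchy--Schwarz step that produces the universal factor $i+2$. Writing $(\mathcal{L}_{i}^{up}f,f)=(\delta_{i}f,\delta_{i}f)=\sum_{\bar{F}\in S_{i+1}(K)}f(\partial[\bar{F}])^{2}w(\bar{F})$, I would use that an $(i+1)$-simplex $\bar{F}$ has exactly $i+2$ faces of dimension $i$, so $f(\partial[\bar{F}])$ is a signed sum of $i+2$ terms; Cauchy--Schwarz then gives $f(\partial[\bar{F}])^{2}\leq(i+2)\sum_{F\in\partial\bar{F}}f([F])^{2}$. Summing over $\bar{F}$ against the weights $w(\bar{F})$ and exchanging the order of summation so that the inner sum regroups by $i$-faces, the coefficient of $f([F])^{2}$ becomes $\sum_{\bar{F}\supset F}w(\bar{F})=\deg F$; this is precisely the passage (\ref{last})--(\ref{ineq0}), and dividing by $(f,f)$ yields (\ref{ineq1}) and hence (\ref{ineq2}). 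I regard the combinatorial identity ``number of $i$-faces of an $(i+1)$-simplex $=i+2$'' as the genuine content of the theorem, since everything else is a matter of bounding $Q(f_{m})$.

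With (\ref{ineq2}) in hand the three cases are immediate. For (iii) I would estimate termwise $\deg F/w(F)\leq(\max_{F}\deg F)/(\min_{F}w(F))$, so that $Q(f_{m})$, being a weighted average of such ratios, is bounded by the same quantity; this reproduces (\ref{weighted max bound}). Case (ii) is the specialization $w\equiv1$, where $Q(f_{m})=\sum f_{m}([F])^{2}\deg F/\sum f_{m}([F])^{2}\leq\max_{F}\deg F$, giving (\ref{bound}). For (i) the normalizing condition (\ref{normalizing condition}) forces $w(F)=\deg F$, whence the numerator and denominator of $Q(f_{m})$ coincide and $Q(f_{m})=1$, so $\lambda_{m}\leq i+2$.

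The only point requiring care --- and the place I expect a reader to object --- is case (i), where one must check that $Q(f_{m})$ equals $1$ rather than merely being $\leq 1$. The normalizing condition is imposed only on non-facets, so I would argue that the $i$-faces actually seen by $\mathcal{L}_{i}^{up}$ are exactly those of positive degree, i.e. those contained in some $(i+1)$-face and hence non-facets, on which $w(F)=\deg F$ holds; any $i$-face of degree zero contributes a zero column to $\delta_{i}$ and so lies in $\ker\mathcal{L}_{i}^{up}$, affecting neither $\lambda_{m}$ nor the quotient. By Remark \ref{pure sc dependencies} it is in fact enough to work on the $(i+1)$-skeleton, which makes this bookkeeping transparent. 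Beyond this, the proof is entirely a reorganization of the preceding display-by-display computation.
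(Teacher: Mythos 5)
Your proposal is correct and follows essentially the same route as the paper: the theorem there is explicitly a summary of the preceding display-by-display computation (Cauchy--Schwarz giving the factor $i+2$ in (\ref{last}), division by $(f,f)$, substitution of the top eigenfunction into (\ref{ineq2}), and then the three weight hypotheses). Your extra care in case (i) about $i$-faces that are facets is sound but not strictly needed, since the upper bound only requires $Q(f_{m})\leq 1$ (not $Q(f_{m})=1$), and $\deg F\leq w(F)$ holds termwise for every $i$-face under the normalized weights.
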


In the following theorems we present some lower bounds on $\lambda_{max}$.
\begin{thm}
Without  loss of generality, let $K$ be a pure simplicial complex of
dimension $i+1$,  $\mathcal{L}(K,w_{K})$ the Laplace operator with 
the weight function $w_{K}$,  $\lambda_{max}$ the maximal eigenvalue in the spectrum of $\mathcal{L}_{i}^{up}(K,w_{K})$,  and 
$\vol_{i}(K)=\sum_{F\in S_{i}} \deg F$, then
\begin{itemize}
\item[(i)] $\dim C^{i}/(\dim C^{i+1}-\dim H^{i+1})\leq \lambda_{max}$, if  $\mathcal{L}_{i}^{up}=\lup$,
\item[(ii)] $ \vol_{i}(K) /(\dim C^{i+1}-\dim H^{i+1})\leq \lambda_{max}$, if  $\mathcal{L}_{i}^{up}=L_{i}^{up}$,
\item[(iii)] $\vol_{i}(K) /(\max_{F\in S_{i}} w(F)(\dim C^{i+1}-\dim H^{i+1}))\leq \lambda_{max}$, for all other choices of  scalar products.
\end{itemize}
\end{thm}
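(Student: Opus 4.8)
The plan is to bound $\lambda_{\max}$ from below by the \emph{average} of the nonzero eigenvalues of $\mathcal{L}_{i}^{up}$. Since $\mathcal{L}_{i}^{up}$ is positive semidefinite, all of its nonzero eigenvalues are strictly positive, so their largest element is at least their arithmetic mean; and this mean equals the trace divided by the number of nonzero eigenvalues, i.e.
$$
\lambda_{\max} \geq \frac{\tr \mathcal{L}_{i}^{up}}{\rank \mathcal{L}_{i}^{up}}.
$$
Both numerator and denominator can be read off directly from the data of $K$ and $w_{K}$, and evaluating them in each of the three cases will yield (i)--(iii).

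First I would compute the denominator. The number of nonzero eigenvalues of $\mathcal{L}_{i}^{up}=\delta_{i}^{*}\delta_{i}$ equals $\rank \delta_{i}=\dim \im \delta_{i}$. Because $K$ is pure of dimension $i+1$, one has $C^{i+2}=0$ and hence $\delta_{i+1}=0$, so $\ker \delta_{i+1}=C^{i+1}$ and $\tilde{H}^{i+1}=C^{i+1}/\im\delta_{i}$. Consequently
$$
\rank \mathcal{L}_{i}^{up} = \dim \im \delta_{i} = \dim C^{i+1} - \dim \tilde{H}^{i+1},
$$
which is precisely the common denominator appearing in all three bounds (this is also the content of the preceding Remark). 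I would note that this quantity is positive whenever $K$ carries at least one $(i+1)$-simplex with $\delta_{i}\neq 0$, so that the stated bounds are meaningful.

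Next I would compute the numerator from the diagonal of the matrix of $\mathcal{L}_{i}^{up}$. Using the explicit expression for $(\mathcal{L}^{up}_{i})_{(e_{[F]},e_{[F]})}$ derived earlier, the diagonal entry at an $i$-face $F$ is $\sum_{\bar{F}:\,F\in\partial\bar{F}} w(\bar{F})/w(F)=\deg F/w(F)$, so
$$
\tr \mathcal{L}_{i}^{up} = \sum_{F\in S_{i}} \frac{\deg F}{w(F)}.
$$
In the normalized case $\mathcal{L}_{i}^{up}=\lup$ the normalizing condition $w(F)=\deg F$ holds for every $i$-face (these are not facets, as $\dim K=i+1$), so each summand equals $1$ and $\tr \lup = \dim C^{i}$, giving (i). In the unweighted case $\mathcal{L}_{i}^{up}=L_{i}^{up}$ we have $w\equiv 1$, so $\tr L_{i}^{up}=\sum_{F\in S_{i}}\deg F=\vol_{i}(K)$, giving (ii). For an arbitrary weight function, bounding each $w(F)$ from above by $\max_{F\in S_{i}} w(F)$ yields $\tr \mathcal{L}_{i}^{up}\geq \vol_{i}(K)/\max_{F\in S_{i}} w(F)$, giving (iii).

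The argument is essentially a trace/rank estimate and presents no serious obstacle; the only points requiring care are the justification that $\lambda_{\max}$ dominates the mean of the \emph{nonzero} spectrum (which rests on positive semidefiniteness of $\mathcal{L}_{i}^{up}$), and the identification of $\rank \mathcal{L}_{i}^{up}$ with $\dim C^{i+1}-\dim \tilde{H}^{i+1}$, which uses the purity and dimension hypotheses on $K$ in an essential way. A degenerate subtlety worth flagging is the case $\delta_{i}=0$: then $\rank \mathcal{L}_{i}^{up}=0$ and the bounds are vacuous, so one tacitly assumes the complex is nondegenerate in the relevant dimension.
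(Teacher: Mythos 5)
Your proposal is correct and follows essentially the same route as the paper: both arguments bound $\lambda_{\max}$ below by the trace of $\mathcal{L}_{i}^{up}$ divided by the number of nonzero eigenvalues, identify that count as $\dim C^{i+1}-\dim \tilde{H}^{i+1}$ (the paper cites its Theorem on zero eigenvalues, you re-derive it from purity via $\rank\delta_{i}$), and then evaluate the trace $\sum_{F\in S_{i}}\deg F/w(F)$ in the three cases. Your explicit flagging of the degenerate case $\delta_{i}=0$ is a minor point of care the paper leaves implicit, but the substance is identical.
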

\begin{proof}
The sum of all eigenvalues is equal to the trace of  the Laplace matrix, i.e. $\sum_{F\in S_{i}}\sum_{\bar{F}: F\in \bar{F}}\frac{w(\bar{F})}{w(F)}$. Together with Theorem \ref{thm zeros}, this yields the inequality
\begin{equation}
 \frac{\sum_{F\in S_{i}}\sum_{\bar{F}: F\in \bar{F}}\frac{w(\bar{F})}{w(F)} }{\dim C^{i+1}-\dim H^{i+1}}\leq \lambda_{max},
\end{equation}
which proves the theorem.
\end{proof}

\begin{thm}
\label{lower bound max general}
Let $K$ be a pure $(i+1)$-dimensional  simplicial complex and let $\lambda_{max}$ denote the maximum eigenvalue of the operator $\mathcal{L}^{up}_{i}(K,w)$,   then 
\begin{equation}
\frac{D}{d}+ \frac{(i+1)D}{Nd} \leq \lambda_{max},
\end{equation}
where $D$, $d$ are  maximal degree, weight, respectively  over all $i$-simplices and  $N$ is the minimal number of $(i+1)$-faces which are incident to an $i$-simplex of degree $D$.
\end{thm}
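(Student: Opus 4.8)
The plan is to use the Courant--Fischer--Weyl characterization of $\lambda_{max}$ (Theorem~\ref{min-max theorem}), which gives $\lambda_{max}=\max_{f}(\mathcal{L}^{up}_{i}f,f)/(f,f)$, so that any explicit test cochain $f$ produces a lower bound. Invoking the identity $(\mathcal{L}^{up}_{i}f,f)=(\delta_{i}f,\delta_{i}f)=\sum_{\bar{F}\in S_{i+1}}w(\bar{F})\,f(\partial[\bar{F}])^{2}$ together with $(f,f)=\sum_{F\in S_{i}}w(F)f([F])^{2}$, the whole argument reduces to designing an $f$ that concentrates the numerator on the $(i+1)$-simplices incident to a single maximal-degree $i$-face.

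First I would fix an $i$-face $F_{0}$ with $\deg F_{0}=D$ that is incident to exactly $N$ faces of $S_{i+1}$, say $\bar{F}_{1},\ldots,\bar{F}_{N}$ (possible because $N$ is the minimum of this count over the degree-$D$ simplices), so that $\sum_{k=1}^{N}w(\bar{F}_{k})=\deg F_{0}=D$. Each $\bar{F}_{k}=F_{0}\cup\{x_{k}\}$ carries, besides $F_{0}$, exactly $i+1$ further $i$-faces, all of which contain the apex $x_{k}$. The key combinatorial observation, which removes the only genuine difficulty, is that these non-$F_{0}$ faces are pairwise distinct across $k$: any such face of $\bar{F}_{k}$ contains $x_{k}\notin\bar{F}_{l}$ for $l\neq k$, hence cannot be a face of $\bar{F}_{l}$. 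Consequently the $N(i+1)$ auxiliary faces are all distinct, and no sign conflict can arise.

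Then I would orient each $\bar{F}_{k}$ so that $\sgn([F_{0}],\partial[\bar{F}_{k}])=+1$ and set $f([F_{0}])=a$, $f([F])=\sgn([F],\partial[\bar{F}_{k}])\,\beta$ for each of the $i+1$ non-$F_{0}$ faces $F$ of $\bar{F}_{k}$, and $f\equiv 0$ otherwise; by the distinctness observation this is unambiguous. A direct computation gives $f(\partial[\bar{F}_{k}])=a+(i+1)\beta$ for every $k$. Keeping only the terms coming from $\bar{F}_{1},\ldots,\bar{F}_{N}$ (the others are non-negative) bounds the numerator below by $\sum_{k}w(\bar{F}_{k})(a+(i+1)\beta)^{2}=D\,(a+(i+1)\beta)^{2}$, while bounding each weight in the denominator by $d$ gives $(f,f)\le d\,[a^{2}+N(i+1)\beta^{2}]$. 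Hence
$$
\lambda_{max}\ge \frac{D\,(a+(i+1)\beta)^{2}}{d\,[a^{2}+N(i+1)\beta^{2}]}.
$$
Finally I would optimize this scalar ratio in $\beta/a$; the choice $\beta=a/N$ makes it equal to $(N+i+1)/N$, yielding $\lambda_{max}\ge (D/d)(N+i+1)/N = D/d + (i+1)D/(Nd)$, as claimed.

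I expect the main obstacle to be exactly the well-definedness and sign-coherence of the auxiliary part of $f$: a priori one worries that an $i$-face shared by two of the $\bar{F}_{k}$ would force incompatible sign assignments and destroy the clean reinforcement $a+(i+1)\beta$. The distinctness observation in the second paragraph is what makes this fear vacuous, and everything downstream of it is a routine one-variable Rayleigh-quotient optimization.
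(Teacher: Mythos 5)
Your proof is correct, and it takes a genuinely different (though parallel) route from the paper's. The paper works one dimension up: it tests the Rayleigh quotient $(\delta_{i}^{*}f,\delta_{i}^{*}f)/(f,f)$ with the parameter-free $(i+1)$-cochain $f=\sum_{k}\sgn([F_{0}],\partial[\bar{F}_{k}])\,e_{[\bar{F}_{k}]}$, i.e.\ it bounds the top eigenvalue of $\mathcal{L}^{down}_{i+1}=\delta_{i}\delta_{i}^{*}$ and implicitly invokes the coincidence of the nonzero spectra $\s(\mathcal{L}^{up}_{i})\dote\s(\mathcal{L}^{down}_{i+1})$; it then closes with the Cauchy--Schwarz inequality $\sum_{k}w(\bar{F}_{k})^{2}\geq D^{2}/N$. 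You instead test $\mathcal{L}^{up}_{i}$ directly on $i$-cochains with a two-parameter function ($a$ on $F_{0}$, sign-matched $\beta$ on the auxiliary faces) and replace the Cauchy--Schwarz step by the one-variable optimization $\beta=a/N$, which yields exactly the same constant $(N+i+1)/N$. Both arguments hinge on the same combinatorial fact — that the $N(i+1)$ non-$F_{0}$ faces of the $\bar{F}_{k}$ are pairwise distinct — which you isolate and prove explicitly, whereas the paper uses it silently (it is what makes the cross terms in $(\delta_{i}^{*}f,\delta_{i}^{*}f)$ vanish, so that the quotient splits cleanly into $D/w(F_{0})$ plus the auxiliary sum). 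What your version buys is self-containedness: no appeal to the up/down duality is needed, and the test object lives in the space on which the operator of the statement actually acts; what the paper's version buys is a shorter computation with no parameter to tune.
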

\begin{proof}
Assume $F$ is an $i$-simplex of maximal degree with the minimal number of incident $(i+1)$-faces, i.e. there exist exactly  $N$ $(i+1)$-simplices which contain $F$ as a facet and $\sum_{\bar{F}:F\in \bar{F}}w(\bar{F})=D$.
Let $f=\sum_{k=1}^{N}\sgn([F],\partial[\bar{F_{k}}])e_{[\bar{F_{k}}]}$, then we obtain
\small
\begin{align*}
\lambda_{max} &\geq \frac{(\delta^{*}_{i}f,\delta^{*}_{i}f)}{(f,f)} \\
&\geq \!\! \frac{1}{D}\!\! \left(\!\sum_{k=1}^{N}\sgn([F],\partial[\bar{F_{k}}])\!\!\!\!\!\sum_{E\in\partial \bar{F_{k}}}\!\!\! \sgn([E],\partial  [\bar{F_{k}}])\frac{w(\bar{F}_{k})}{w(E)}\e_{[E]},  \!\!\sum_{k=1}^{N}\sgn([F],\partial [\bar{F_{k}}])\!\!\!\!\!\sum_{E\in\partial \bar{F_{k}}}\!\!\! \sgn([E],\partial  [\bar{F_{k}}])\frac{ w(\bar{F}_{k})}{w(E)} \e_{[E]}\right)   \\
&= \frac{1}{D}  ( e_{[F]},  e_{[F]})  \\
&+\!\!  \frac{1}{D}\!\! \left( \sum_{k=1}^{N}\sgn([F],\partial[\bar{F_{k}}])\!\!\!\!\! \sum_{\substack{E\in\partial \bar{F_{k}}:\\ E\neq F}}\!\!\! \sgn([E],\partial  [\bar{F_{k}}]) \frac{w(\bar{F}_{k})}{w(E)} \e_{[E]},  \!\sum_{k=1}^{D}\sgn([F],\partial [\bar{F_{k}}])\!\!\!\!\! \sum_{\substack{E\in\partial \bar{F_{k}}:\\ E\neq F}}\!\!\! \sgn([E],\partial  [\bar{F_{k}}])\frac{w(\bar{F}_{k}) }{w(E)}\e_{[E]}\right) \\
&= \frac{D}{w(F)}+ \frac{1}{D}\sum_{k=1}^{N}\sum_{\substack{E\in\partial \bar{F_{k}}:\\ E\neq F}} \frac{w^{2}(\bar{F_{k}})}{w(E)}\\
&\geq \frac{D}{w(F)}+ \frac{i+1}{dD}\sum_{k=1}^{N}w^{2}(\bar{F_{k}})\\
&\geq \frac{D}{d}+ \frac{i+1}{dD}\frac{D^{2}}{N}
\end{align*} 
\normalsize
The inequalities above are  a consequence of the variational characterization of eigenvalues (Theorem \ref{min-max theorem}), and of the Cauchy-Schwarz inequality.
\end{proof}
The previous result for 
$\mathcal{L}(K,w_{K})=L$ generalizes Proposition 8.2. from
\cite{DuvalShifted} and its proof. 
As another special case of Theorem \ref{lower bound max general} we obtain the following lower bounds for the maximal eigenvalue of the normalized Laplacian.
\begin{coll}
\begin{equation}
1+ \frac{i+1}{D} \leq \lambda_{max},
\end{equation}
where $D$ is the maximal degree over all $i$ simplices, and $\lambda_{max}$ the maximal eigenvalue of $\lup$.
\end{coll}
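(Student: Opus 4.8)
The plan is to obtain this bound directly as a specialization of Theorem \ref{lower bound max general}, rather than to repeat the variational argument from scratch. By Remark \ref{pure sc dependencies} we may assume without loss of generality that $K$ is a pure $(i+1)$-dimensional complex, so that the hypotheses of Theorem \ref{lower bound max general} are met and the operator $\lup$ is determined by the weights on the $i$- and $(i+1)$-faces. The theorem then supplies
\begin{equation*}
\frac{D}{d} + \frac{(i+1)D}{Nd} \leq \lambda_{max},
\end{equation*}
with $D$ the maximal degree over all $i$-simplices, $d$ the maximal weight over all $i$-simplices, and $N$ the minimal number of $(i+1)$-faces incident to an $i$-simplex of degree $D$. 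It thus remains only to identify the quantities $d$ and $N$ for the specific weight function defining $\lup$.

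First I would unwind the definition of the normalized combinatorial Laplacian. For $\lup$ the facets carry weight $1$ and every non-facet face $F$ satisfies the normalizing condition $w(F)=\deg F$. Since $K$ is pure of dimension $i+1$, its facets are exactly the $(i+1)$-faces, so $w(\bar F)=1$ for all $\bar F \in S_{i+1}$, while each $i$-face $F$ is a non-facet and hence has weight $w(F)=\deg F$. Consequently the maximal weight over all $i$-simplices equals the maximal degree, i.e. $d = \max_{F\in S_i} w(F)= \max_{F\in S_i}\deg F = D$.

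Next I would determine $N$. Because every $(i+1)$-face contributes weight $1$ to the degree of each of its $i$-subfaces, the degree $\deg F=\sum_{\bar F: F\in\partial \bar F} w(\bar F)$ of an $i$-face simply counts the $(i+1)$-faces incident to $F$. Hence an $i$-simplex of maximal degree $D$ is incident to exactly $D$ faces of dimension $i+1$, forcing $N=D$. Substituting $d=D$ and $N=D$ into the bound from Theorem \ref{lower bound max general} collapses it to
\begin{equation*}
1 + \frac{i+1}{D} \leq \lambda_{max},
\end{equation*}
which is the claimed inequality. The only point requiring care --- and the nearest thing to an obstacle --- is the observation that under this particular normalization both $d$ and $N$ are pinned to $D$; once the unit weight on facets is used to equate ``degree'' with ``number of incident facets'', the corollary falls out by pure substitution.
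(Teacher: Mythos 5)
Your proposal is correct and matches the paper's intent exactly: the paper states this corollary without proof, introducing it as ``another special case of Theorem \ref{lower bound max general},'' and the intended specialization is precisely yours --- for $\lup$ on a pure $(i+1)$-complex the facets have weight $1$, so the degree of an $i$-face counts its incident $(i+1)$-faces, forcing $d=D$ and $N=D$, and substitution gives $1+(i+1)/D \leq \lambda_{max}$. Your additional appeal to Remark \ref{pure sc dependencies} to justify the purity assumption is a sound way of filling in the hypothesis the corollary leaves implicit.
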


\begin{rem}[Negative Weights]
If negative weights in the definition of the weight function  are
allowed , then  bilinear forms (inner products) on cochain vector
spaces are no longer positive definite.
With arbitrary weights,  $\mathcal{L}_{i}^{up}$ acts on functions on $i$-simplices 
$$
\lup f([F])=\frac{1}{w(F)} \sum_{\substack{\bar{F}\in S_{i+1}\\F\in\partial\bar{F}}}\sgn([F],\partial \bar{[F]})f(\partial \bar{[F]}).
$$
This approach enables us to use negative weights, but it also deprives
us of the structure of the cohomology of simplicial complexes. The
eigenvalues need no longer be real nor non-negative.
Here, however, we do not pursue the study of Laplacians with negative weights.
\end{rem}

\section{Circuits, paths, stars and their spectrum }
In this section we calculate the   spectrum  of the up (down) normalized Laplace operator for some 
 classes of simplicial  complexes.  
\begin{thm}
Let $K$ be an  $(n-1)$-dimensional simplex. Then $\s(\lup(K))$  consists of
the eigenvalue $ n/(n-i-1)$ with multiplicity $\binom{n-1}{i+1}$ and
the eigenvalue zero with multiplicity $\binom{n-1}{i}$.
\end{thm}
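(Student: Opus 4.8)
Let $K$ be the full $(n-1)$-simplex, so $S_i(K)$ consists of all $(i+1)$-subsets of $[n]$, giving $\dim C^i = \binom{n}{i+1}$. The plan is to compute $\s(\lup)$ for the normalized operator, where every non-facet face $F$ of dimension $<n-1$ carries weight $w(F)=\deg F$ and the single facet has weight $1$. First I would observe that $K$ is contractible, so $\tilde H^j(K,\rb)=0$ for all $j$; thus by Eckmann's theorem $\ker\mathcal L_i(K)\cong 0$ except for the topological contributions, and all interesting structure lies in the nonzero spectrum. By Theorem~\ref{thm zeros}(i), the multiplicity of zero in $\s(\lup)$ equals $\dim C^i - \dim\im\delta_i = \dim\ker\delta_i$, which for the acyclic complex $K$ collapses to $\dim\im\delta_{i-1}=\binom{n-1}{i}$ (the rank of the boundary, by the binomial identity $\sum_{j}(-1)^{i+j}\binom{n}{j+1}=\binom{n-1}{i}$). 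This already pins down the stated multiplicity $\binom{n-1}{i}$ of the eigenvalue zero, so the real work is identifying the single nonzero eigenvalue $n/(n-i-1)$ and its multiplicity $\binom{n-1}{i+1}$.

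\textbf{Computing the degree and the nonzero eigenvalue.} The key simplification is that $K$ is highly symmetric: every $i$-face $F$ lies in exactly the $(i+1)$-faces obtained by adjoining one of the remaining $n-i-1$ vertices, so with all weights normalized I would show $\deg F = n-i-1$ for every $i$-face (since there are $n-(i+1)=n-i-1$ vertices outside $F$, and each facet-weight above a non-facet contributes $1$). Hence $w(F)=n-i-1$ for all $i$-faces when $\dim F<n-1$. The plan is then to exploit the explicit matrix form $\mathcal L_i^{up}=W_i^{-1}D_i^TW_{i+1}D_i$ from the excerpt. I expect the cleanest route is to observe that on the orthogonal complement of $\ker\delta_i$ the operator $\lup$ acts by a single scalar. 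Concretely, I would use the eigenvalue relation $\s(\mathcal L_i^{up})\dote\s(\mathcal L_{i+1}^{down})$ together with the fact that for the full simplex the combinatorial up- and down-Laplacians are scalar multiples of projections; the underlying reason is that the full simplex is the cone construction par excellence, and $\delta_i\delta_i^*+\delta_{i-1}^*\delta_{i-1}$ acts as a multiple of the identity on cochains (an analogue of the fact that $LL^*+L^*L$ is scalar on the Koszul/standard simplex). Verifying that this scalar equals $n/(n-i-1)$ is the crux.

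\textbf{Pinning the scalar via the trace.} Rather than diagonalize directly, I would determine the nonzero eigenvalue $\mu$ by a trace argument, which avoids grinding through the eigenvectors. Since all nonzero eigenvalues are equal to some $\mu$ with multiplicity $m=\dim\im\delta_i=\binom{n}{i+1}-\binom{n-1}{i}=\binom{n-1}{i+1}$ (the last equality is Pascal's rule), I would compute $\tr\lup=\sum_{F\in S_i}\,\sum_{\bar F\supset F} w(\bar F)/w(F)$ from the diagonal-entry formula $(\mathcal L_i^{up})_{(e_{[F]},e_{[F]})}=\sum_{\bar F}w(\bar F)/w(F)=\deg F/w(F)$. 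With $w(F)=\deg F=n-i-1$ on non-facet $i$-faces this gives each diagonal entry equal to $1$, so $\tr\lup=\dim C^i=\binom{n}{i+1}$. Equating $\tr\lup=m\mu=\binom{n-1}{i+1}\mu$ and solving yields
\begin{equation*}
\mu=\frac{\binom{n}{i+1}}{\binom{n-1}{i+1}}=\frac{n}{n-i-1},
\end{equation*}
which is exactly the claimed eigenvalue. The multiplicity $\binom{n-1}{i+1}$ then follows immediately as $m$.

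\textbf{The main obstacle.} The delicate step is justifying that there is only \emph{one} nonzero eigenvalue, i.e.\ that $\lup$ restricted to $(\ker\delta_i)^\perp$ is genuinely scalar rather than merely having a known trace. A trace argument alone shows the eigenvalues average to $\mu$, but to conclude they are all equal I would either invoke the transitive symmetry of the automorphism group $S_n$ of the full simplex acting on cochains (Schur-type reasoning: the nonzero part of the spectrum lives in an irreducible $S_n$-representation, forcing a single eigenvalue), or directly verify that $\lup(\lup-\mu\,\mathrm{Id})$ annihilates $\im\delta_i^*$ using $\delta_i\delta_{i-1}=0$ and the uniform weights. I expect the representation-theoretic argument to be the most economical: the space $\im\delta_i\subset C^{i+1}$ is, up to the trivial summand, the irreducible Specht-type module, on which any $S_n$-equivariant self-adjoint operator acts by a scalar. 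Confirming equivariance of $\lup$ (clear, since uniform weights are $S_n$-invariant) closes the proof.
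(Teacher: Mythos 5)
Your proposal is correct, and it takes a genuinely different route from the paper's. The paper argues by explicit diagonalization: for each $(i+1)$-face $\bar F$ it introduces the cochain $f_{[\bar F]}$ equal to $\sgn([F],\partial[\bar F])$ on the facets $F$ of $\bar F$ and $0$ elsewhere, verifies by a three-case computation (according to whether $F\in\partial\bar F$, $F$ and $\bar F$ share exactly $i$ vertices, or fewer) that $\lup f_{[\bar F]}=\frac{n}{n-i-1}f_{[\bar F]}$, and notes that $\binom{n-1}{i+1}$ of these functions are linearly independent, the zero multiplicity then being forced by dimension count. You instead split the statement into three structural claims: the kernel dimension from Theorem \ref{thm zeros} plus acyclicity of the full simplex; the \emph{value} of the nonzero eigenvalue from the trace (every diagonal entry of $\lup$ equals $\deg F/w(F)=1$ by the normalizing condition, so $\tr\lup=\binom{n}{i+1}$ and $\binom{n}{i+1}/\binom{n-1}{i+1}=n/(n-i-1)$); and the fact that there is only \emph{one} nonzero eigenvalue from symmetry, since $(\ker\delta_i)^{\perp}\cong\im\delta_i$ is an irreducible $S_n$-module and $\lup$ is $S_n$-equivariant and self-adjoint, hence scalar there. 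Your route explains \emph{why} the spectrum is two-valued and avoids all sign bookkeeping; the paper's route is elementary, self-contained, and produces explicit eigenfunctions of the type it reuses elsewhere (e.g.\ in the example following Theorem \ref{join simple}).

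Three slips should be repaired in a final write-up. (1) Your degree computation is wrong: with the recursive normalization ($w\equiv1$ on facets, $w(F)=\deg F$ below the top dimension), an $i$-face of the full simplex has $w(F)=\deg F=(n-i-1)!$, not $n-i-1$, because the $(i+1)$-faces above $F$ are themselves non-facets (of weight $(n-i-2)!$) unless $i=n-2$. This is harmless for your trace argument, which only uses $\deg F/w(F)=1$, but it would corrupt any argument needing the ratio $w(\bar F)/w(F)$, whose correct value is $1/(n-i-1)$. (2) The expression $\delta_i\delta_i^*+\delta_{i-1}^*\delta_{i-1}$ does not typecheck (its two summands act on different spaces); you mean $\delta_i^*\delta_i+\delta_{i-1}\delta_{i-1}^*$, and the scalar identity $L_i^{up}+L_i^{down}=n\,\mathrm{Id}$ holds for the \emph{unit-weight} Laplacian, not the normalized one, whose up and down parts rescale differently. (3) The irreducibility of $\im\delta_i$ is asserted rather than proved, and ``up to the trivial summand'' is off: $\im\delta_i$ is exactly the hook Specht module $S^{(n-i-1,1^{i+1})}$, with no trivial summand; this is a standard fact, but it is the load-bearing step and needs a citation or proof. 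If you prefer to avoid representation theory, the most economical completion of your plan is: with unit weights, $L_i^{up}+L_i^{down}=n\,\mathrm{Id}$ on $C^i$ (diagonal entries sum to $(n-i-1)+(i+1)=n$, and off-diagonal up- and down-entries cancel in the full simplex), while $L_i^{down}$ vanishes on $(\ker\delta_i)^{\perp}=(\im\delta_{i-1})^{\perp}$, so $L_i^{up}=n\,\mathrm{Id}$ there; since the normalized weights are constant in each dimension with $w_{i+1}/w_i=1/(n-i-1)$, one has $\lup=\frac{1}{n-i-1}L_i^{up}$, whence the single nonzero eigenvalue $n/(n-i-1)$ with multiplicity $\dim\im\delta_i=\binom{n-1}{i+1}$.
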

\begin{proof}
 We will  prove that the function $f\in C^{i}(K,\rb)$, 
\begin{displaymath}
f_{[\bar{F}]}([F])=\left\{ \begin{array}{cl}
\sgn([F],\partial \bar{[F]}) & \textrm{if } F \textrm{ is facet of  $(i+1)$-face } \bar{F} \\
0& \textrm{ otherwise,}\\
\end{array}
\right.
\end{displaymath}
is an eigenfunction of $\lup(K)$ for the eigenvalue $ n/(n-i-1)$.\\
It is not difficult to see that there are exactly $ \binom{n-1}{i+1}$  linearly independent functions of this form.
We have to check that the equality 
$$
(\lup f_{[\bar{F}]})[F]=\frac{n}{n-i-1} f([F])
$$
holds for every $i$-dimensional face $F$ of $K$. We distinguish three cases:\\
$(i)$ $F$ is an arbitrary facet of $\bar{F}$. Then,\small
\begin{align*}
(\Delta_{i}^{up} f_{[\bar{F}]})([F])={}&  \sum_{\substack{\bar{E}\in S_{i+1}: \\F\in \partial \bar{E}}} \frac{w(\bar{E})}{w(F)} f_{[\bar{F}]}([F]) \\
 & + \sum_{\substack{F'\in S_{i}(L):\\ (\exists \bar{E}\in S_{i+1}(L)) F,F'\in  \partial \bar{E}}} 
\frac{w(\bar{E})}{w(F)}\sgn([F],\partial \bar{[E]})\sgn([F'],\partial \bar{[E]})f_{[\bar{F}]}([F'])\\
={} &\frac{1}{n-i-1} \sum_{\substack{\bar{E}\in S_{i+1}: \\F\in \partial \bar{E}}}f_{[\bar{F}]}(F)\\
&+\frac{1}{n-i-1} \sum_{\substack{F'\in S_{i}(L):\\ (\exists \bar{E}\in S_{i+1}(L)) F,F'\in  \partial \bar{E}}}\sgn([F], \partial \bar{[E]})\sgn([F'],\partial \bar{[E]})f_{[\bar{F}]}([F'])\\
={}& f_{[\bar{F}]}([F])+ \frac{i+1}{n-i-1}\sgn([F],\partial \bar{[F]})\\
={}& \frac{n}{n-i-1}f([F]).
\end{align*}
\normalsize
$(ii)$  $F$ and $\bar{F}$ have $i$ vertices in common, i.e. they intersect in a face of dimension $i-1$. \\
Then by definition   $f([F])=0$. 
Let $v_{0}, v_{1}, \ldots, v_{i+2}\in [n]$ be arbitrary vertices of $L$ ordered increasingly. 
Without loss of generality,  assume 
$0\leq j<k<l\leq i+2$, 
 $\bar{F}=[v_{0},\ldots,\hat{v_{l}},\ldots, v_{i+2}]$ and 
$[F]=[v_{0},\ldots,\hat{v_{j}}, \ldots, \hat{v_{k}},\ldots, v_{i+2}]$.
Then there exist exactly  two $i$-faces  $F_{1}$ and $F_{2}$  in the boundary of $\bar{F}$ and two  
$(i+1)$-simplices $\bar{F}_{1}$ and  $\bar{F}_{2}$ of $L$, such that $F, F_{1} \in \partial \bar{F}_{1} \textrm{ and } F, F_{2} \in \partial \bar{F}_{2}$.
In particular, $F_{1}=[v_{0}, \ldots, \hat{v_{k}},\ldots,\hat{v_{l}},\ldots, v_{i+2}]$,  
$F_{2}=[v_{0},\ldots,\hat{v_{j}}, \ldots,\hat{v_{l}},\ldots, v_{i+2}]$  and 
$\bar{F}_{1}=[v_{0}, \ldots, \hat{v_{k}},\ldots, v_{i+2}]$,  $\bar{F}_{2}=[v_{0},\ldots,\hat{v_{j}}, \ldots, v_{i+2}]$.
Now it is straightforward to calculate
\begin{align*}
(\Delta_{i}^{up} f_{[\bar{F}]})([F])={} &  0+  \sgn([F],\partial [\bar{F}_{1}])\sgn([F_{1}],\partial[\bar{F}_{1}])f_{[\bar{F}]})([F_{1}])\\
& +
\sgn([F],\partial [\bar{F}_{2}])\sgn([F_{2}],\partial[\bar{F}_{2}])f_{[\bar{F}]})([F_{2}])\\
={} & \sgn([F],\partial [\bar{F}_{1}])\sgn([F_{1}],\partial[\bar{F}_{1}])\sgn([F_{1}],\partial \bar{[F]})\\
&+
\sgn([F],\partial [\bar{F}_{2}])\sgn([F_{2}],\partial[\bar{F}_{2}])\sgn([F_{2}],\partial \bar{[F]})\\
= {} &  (-1)^{j}(-1)^{l-1}(-1)^{k}+ (-1)^{k-1}(-1)^{l-1}(-1)^{j}\\
={} &0.
\end{align*}
$(iii)$  $F$ and $\bar{F}$ have less than $i$ vertices in common. \\Then there are no faces in the boundary of $\bar{F}$ which are $(i+1)$-up neighbours of $F$.
This implies  that $\lup f([F])=0$, which  completes the proof.
\end{proof}
In the remainder of this section, we calculate the spectrum of circuits, paths and stars.
\begin{defn}
\label{circuit def}
A pure simplicial complex $L$ of dimension $i$ is called an \emph{$i$-path of length $m$} iff
 there is an ordering of its $i$-simplices $F_{1}< F_{2}<\ldots <F_{m}$, such that 
 $F_{i}$ and $F_{j}$ are $(i-1)$-down neighbours iff $ \mid j-l\mid =1$.
When $F_{m}$ coincides with $F_{1}$,    we say that $L$ is an \emph{$i$-circuit}
of length $(m-1)$. The vertices in the intersection   $\bigcap_{j=1}^{m-1}F_{j}$  are called \emph{centers} of  $L$.
\end{defn}

\begin{figure}[h!tp]
  \begin{center}
 \subfigure[raggedright,scriptsize][$2$-circuit of length $6$ with an empty center]{\label{circuit empty}\includegraphics[width=2cm]{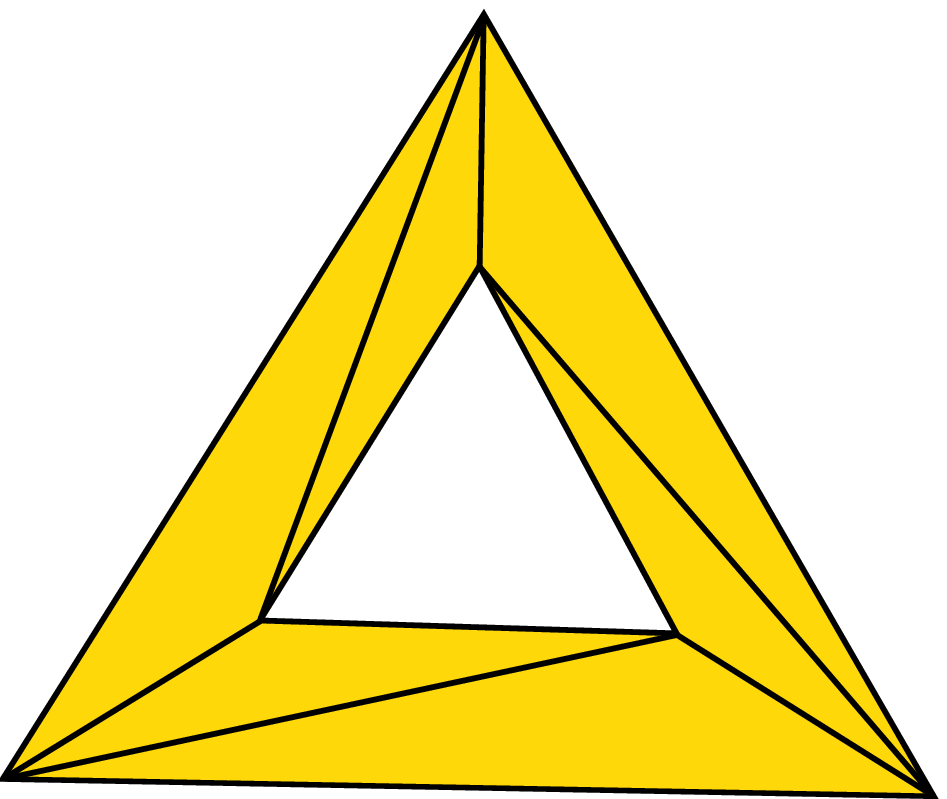}} \qquad
    \subfigure[raggedright,scriptsize][$2$-circuit of  length $6$ with one vertex in a  center]{\label{circuit center}\includegraphics[width=2cm]{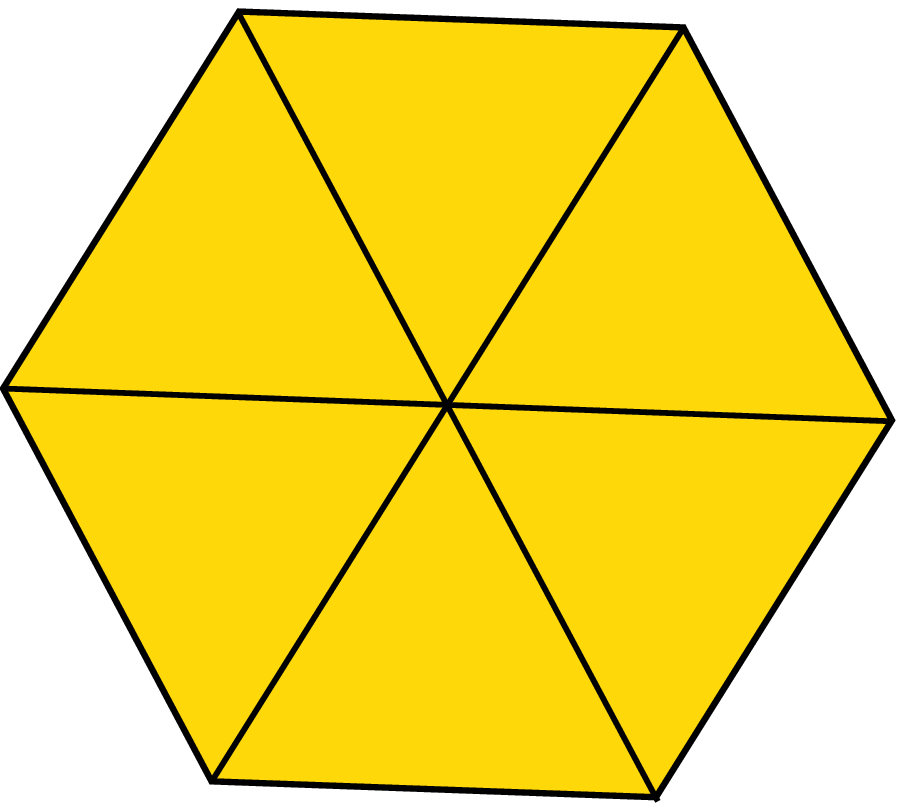}}\qquad
    \subfigure[raggedright][$2$-path of length  $3$]{\label{path center}\includegraphics[width=2cm]{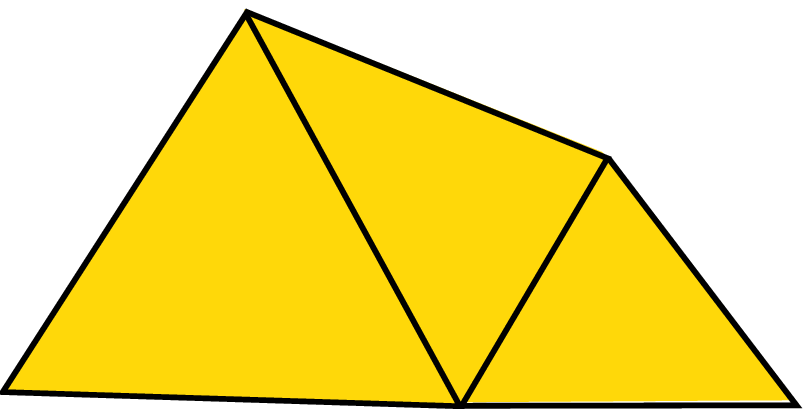}}\qquad
    \subfigure[raggedright][$2$-star of  length  $3$]{\label{star}\includegraphics[width=2cm]{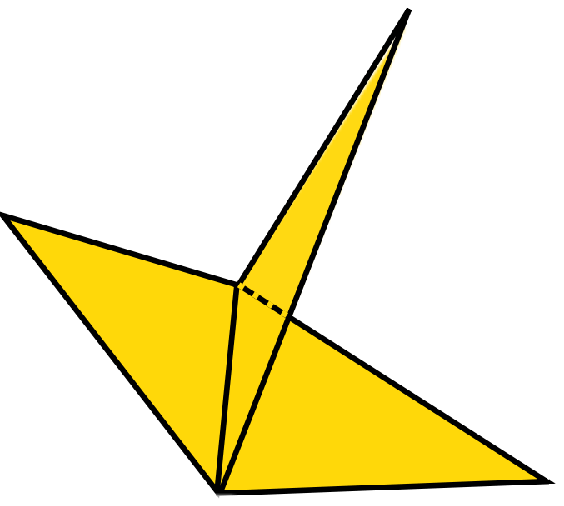}}
  \end{center}
  \caption{Examples of circuits, paths and stars}
\end{figure}
Note that the simplicial complexes in Figures \ref{circuit center} and \ref{path center} have one central vertex, i.e. a center.\\
Before we proceed to calculate $\s(\lup)$ of these complexes, we  recall the definition of orientability.
 \begin{defn}
Let $K$ be a pure $(i+1)$-dimensional simplicial complex. We say that $K$ is \emph{orientable} iff
it is possible to assign an orientation to all $(i+1)$-faces of $K$ in such a way that any two simplices that intersect in an $i$-face induce a different orientation on that face.
We say that such simplices are oriented \emph{coherently}.
\end{defn}
Note that if an $i+1$-dimensional simplicial complex is orientable,
then any of its $i+1$-faces has at most one
$i$-down neighbour.

Choosing an orientation on $(i+1)$-faces of the orientable simplicial
complex $K$ is equivalent to choosing a basis $B_{i+1}(K)$ of the
vector space $C_{i+1}(K,\rb)$ consisting of elementary  $(i+1)$-chains
$[\bar{F}]$ that are oriented \emph{coherently}.

For the subsequent calculations, the following obvious result (see
e.g.   \cite{greenman}) will be useful.
\begin{lem}
\label{Greenman1}
If two matrices $M$ and $P$ commute, i.e., $MP=PM$, and if $\lambda$ is a simple eigenvalue of $P$, then its corresponding eigenvector $v$ is also an eigenvector of $M$.
\end{lem}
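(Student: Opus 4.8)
The plan is to show that $Mv$ lies in the $\lambda$-eigenspace of $P$, and then to use the simplicity of $\lambda$ to force $Mv$ to be a scalar multiple of $v$. This reduces the whole statement to a one-line computation together with a dimension count.

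First I would start from the hypothesis $Pv=\lambda v$ and apply $M$, using the commutation relation $MP=PM$ to move $P$ to the outside. Concretely,
\[
P(Mv)=(PM)v=(MP)v=M(Pv)=M(\lambda v)=\lambda(Mv),
\]
so $Mv$ is again an eigenvector of $P$ for the eigenvalue $\lambda$ (or is the zero vector). Then I would invoke the assumption that $\lambda$ is a \emph{simple} eigenvalue of $P$: the eigenspace $\ker(P-\lambda I)$ is one-dimensional and is therefore spanned by $v$. Since $Mv$ lies in this one-dimensional space, there is a scalar $\mu$ with $Mv=\mu v$, which says precisely that $v$ is an eigenvector of $M$ (for the eigenvalue $\mu$), as claimed.

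The argument is routine and presents no real obstacle; the only point that must be used with care is the simplicity of $\lambda$. Without it, $Mv$ would merely be constrained to lie in a higher-dimensional eigenspace of $P$ and need not be proportional to $v$, so the conclusion could fail. Simplicity is exactly what collapses that eigenspace to the line through $v$ and lets us read off the eigenvector property for $M$.
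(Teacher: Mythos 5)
Your proof is correct and complete: the computation $P(Mv)=M(Pv)=\lambda(Mv)$ together with the one-dimensionality of $\ker(P-\lambda I)$ is exactly the standard argument, and your parenthetical handling of the case $Mv=0$ (which still gives $Mv=0\cdot v$) closes the only small loophole. The paper itself offers no proof at all --- it states the lemma as an ``obvious result'' with a citation --- so your write-up simply supplies the canonical argument that the cited reference would contain; there is nothing in the paper to diverge from.
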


Let $\tilde{p}$ be  a permutation of  the elements of  a basis $B_{i}(K)$ of  $C_{i}(K,\rb)$, for an arbitrary simplicial complex $K$, and let   $\bar{p}$ be the permutation of elementary cochains  of dimension $i$ induced by $\tilde{p}$. Denote  the   linear extension  of $\bar{p}$ on 
$C^{i}(K,\rb)$ by  $p$.
 Then we have the following equivalences
$$\tilde{p}([F])=[F‘]\Leftrightarrow \bar{p}(e_{[F]})=e_{[F‘]} \Leftrightarrow p(e_{[F]})=e_{[F‘]} .$$
To simplify the notation, we will designate any of the maps $\tilde{p}$, $\bar{p}$, $p$ by $p$.
 It will be clear from the argument of $p$ which one is used.
Furthermore, we will write $p(F)$ to denote the $i$-face which is uniquely determined by the mapping $p([F])$.
To prove that $p$ and $\ldown$ commute, it is necessary to check  if $p\Delta_{i}^{down} e_{[F]}=\Delta_{i}^{down} pe_{[F]}$ holds for every 
$i$-face $F$.
Since 
\small
\begin{align*}
p\Delta_{i}^{down} e_{[F]}={}&
\sum_{E\in \partial F} \frac{w(F)}{w(E)}p(e_{[F]})\\
&+ \sum_{\substack{F'\in S_{i}(K):\\(\exists E\in S_{i-1}(K)) F\cap F'=E}}\frac{w(F)}{w(E)}\sgn([E],\partial [F])\sgn([E],\partial [F'])p(e_{[F']}),
\end{align*}
\normalsize
and 
\small
\begin{align*}
\Delta_{i}^{down} pe_{[F]}={}&  \sum_{p(E)\in \partial p(F)} \frac{w(p(F))}{w(p(E))}e_{p([E])}\\
&+ \sum_{\substack{p(F') \in S_{i}(K):\\(\exists p(E)\in S_{i-1}(K))\\ p(F)\cap p(F')=p(E)}}\frac{w(p(F))}{w(p(E))}
\sgn(p([E]),\partial p([F]))\sgn(p([E]),\partial p([F']))e_{p([F'])},
\end{align*}
\normalsize
 it suffices to show
 \small
\begin{equation}
\label{eq:perm:part1}
\sum_{E\in \partial F} \frac{w(F)}{w(E)}=\sum_{p(E)\in \partial p(F)} \frac{w(p(F))}{w(p(E))}
\end{equation}
\normalsize
and
\small
\begin{equation}
\label{eq:perm:part2}
\frac{w(p(F))}{w(p(E))}\sgn(p([E]),\partial p([F]))\sgn(p([E]),\partial p([F']))=\frac{w(F)}{w(E)}\sgn([E],\partial [F])\sgn([E],\partial [F'])
\end{equation}
\normalsize
for every $F$ and $F'$ which are  $(i-1)$-down neighbours in $K$ and  every elementary $i$-cochain $e_{[F]}$. 
\begin{thm}
\label{circuit thm}
Let $K$ be an orientable $i$-circuit of length $m$. Then the eigenvalues of $\Delta_{i}^{down}(K)$ are $i-\cos(2\pi j/m)$, 
$j=0,1,\ldots m-1$.
\end{thm}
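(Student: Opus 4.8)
The plan is to diagonalize $\ldown(K)$ by combining its explicit matrix form with the cyclic symmetry of the circuit. Since $K$ is orientable, I would first fix a coherent orientation of the $i$-faces $F_{1},\ldots,F_{m}$ and work in the associated basis of elementary cochains $e_{[F_{1}]},\ldots,e_{[F_{m}]}$ of $C^{i}(K,\rb)$, so that $\dim C^{i}=m$. The normalizing condition fixes the weights: each facet satisfies $w(F_{k})=1$, each $(i-1)$-face $E$ lying in two consecutive simplices has $\deg E=2$, and each remaining $(i-1)$-face has $\deg E=1$. Since every $F_{k}$ has exactly two shared and $i-1$ free codimension-one faces, the explicit expression for $\ldown$ gives the diagonal entry
\[
\sum_{E\in\partial F_{k}}\frac{w(F_{k})}{w(E)}=2\cdot\tfrac12+(i-1)\cdot 1=i .
\]

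Next I would read off the off-diagonal entries. The only simplices sharing an $(i-1)$-face with $F_{k}$ are its cyclic neighbours $F_{k-1}$ and $F_{k+1}$; for each such pair the weight ratio is $w(F')/w(E)=\tfrac12$, while the sign factor $\sgn([E],\partial[F_{k}])\,\sgn([E],\partial[F_{k\pm1}])$ equals $-1$, because coherently oriented neighbours induce opposite orientations on the shared face. Hence every nonzero off-diagonal entry equals $-\tfrac12$, and in the chosen basis
\[
\ldown(K)=i\,I-\tfrac12\bigl(S+S^{-1}\bigr),
\]
where $S$ denotes the cyclic shift $e_{[F_{k}]}\mapsto e_{[F_{k+1}]}$ with indices read modulo $m$.

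To diagonalize I would invoke the symmetry machinery developed above: the rotation $p=S$ permutes the faces of $K$, preserves the weights, and, because $K$ is orientable, carries coherently oriented pairs to coherently oriented pairs, so conditions (\ref{eq:perm:part1}) and (\ref{eq:perm:part2}) hold and $p$ commutes with $\ldown$. The eigenvalues $e^{2\pi\sqrt{-1}\,j/m}$ of the cyclic permutation $p$ are all simple, so Lemma \ref{Greenman1} guarantees that the Fourier vectors $v_{j}=\sum_{k}e^{2\pi\sqrt{-1}\,jk/m}e_{[F_{k}]}$ are eigenvectors of $\ldown$; as $j$ ranges over $0,\ldots,m-1$ these $m$ vectors are independent and exhaust $C^{i}(K,\rb)$. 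Applying the circulant above to $v_{j}$ yields the eigenvalue
\[
i-\tfrac12\bigl(e^{2\pi\sqrt{-1}\,j/m}+e^{-2\pi\sqrt{-1}\,j/m}\bigr)=i-\cos\!\left(\frac{2\pi j}{m}\right),\qquad j=0,1,\ldots,m-1,
\]
which is the asserted spectrum.

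The step I expect to be the main obstacle is the sign bookkeeping: one must verify that orientability forces the sign product on each shared $(i-1)$-face to be $-1$, and, in the symmetry formulation, that the rotation $p$ respects the coherent orientation all the way around the loop. This is precisely where orientability is indispensable — a non-orientable circuit would incur a sign defect upon closing up, replacing $S+S^{-1}$ by a twisted shift and hence altering the spectrum, which is why the non-orientable case must be treated separately.
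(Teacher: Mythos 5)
Your proposal is correct and follows essentially the same route as the paper: fix a coherent orientation, observe that the normalizing weights give diagonal entries $i$ and off-diagonal entries $-\tfrac12$ on cyclic neighbours, show the cyclic shift $p$ commutes with $\Delta_{i}^{down}$ via (\ref{eq:perm:part1})--(\ref{eq:perm:part2}), and use Lemma \ref{Greenman1} to conclude that the root-of-unity Fourier vectors are eigenvectors with eigenvalues $i-\cos(2\pi j/m)$. The only cosmetic difference is that you first package the operator as the explicit circulant $iI-\tfrac12\bigl(S+S^{-1}\bigr)$ (which in fact makes Lemma \ref{Greenman1} dispensable), whereas the paper applies the operator to the eigenfunctions $u_{\theta}$ directly.
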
 
 \begin{proof}
 Let  $F_{1}< F_{2}<\ldots <F_{m}$ be the ordering of  $i$-simplices of $K $ satisfying the conditions of 
Definition \ref{circuit def}. Moreover, let $[F_{1}], [F_{2}], \ldots, [F_{m}]$ be  a choice of coherent orientation on them.
Let $p:C^{i}(K,\rb)\rightarrow C^{i}(K,\rb) $ be a map  given by
 $p([F_{k}])=[F_{k+1}]$, for $1\leq k < m$ and $p([F_{m}])=[F_{1}]$. 
It is not difficult to check that
 \begin{equation}
 \label{commutativity}
 p\ldown=\ldown p
 \end{equation}
 In particular, equality (\ref{eq:perm:part1}) is satisfied since the weights of all $i$-faces are equal to $1$ and $w(F)/w(E)= w(pF)/w(pE)$.
Equality  (\ref{eq:perm:part2}) holds because $i$-faces of $K$ are coherently oriented, which gives the equalities
$\sgn([E],\partial [F])\sgn([E],\partial [F'])=-1$ and $\sgn([pE],\partial [pF])\sgn([pE],\partial [pF'])=-1,$ where $F$ and $F'$ are $(i-1)$-down neighbours of $K$ and $E$ is their intersecting face.  Hence   (\ref{commutativity}) is true.

Let $P$   be  the matrix associated to  the mapping $p$.  $P$ is a permutation matrix and  its characteristic polynomial is $\lambda^{m}-1=0$. Eigenvectors of $P$ are 
 $U_{\theta}=(1,$ $ \theta,$ $ \theta^{2},\ldots \theta^{m-1})^{T}$, where $\theta$ is the $m$-th root of unity. Thus, the eigenfunctions of the map $p$ are  $u_{\theta}([F_{k}])=\theta^{k-1}$.

With Lemma \ref{Greenman1}, we can now easily calculate  the eigenvalues of $\ldown$. \\
Let $E_{k}:=F_{k-1}\cap F_{k}$  for  $2\leq k \leq m-1$ and  let $E_{m}:=F_{m}\cap F_{1}$. We have
 \small
 \begin{align*}
 \ldown u_{\theta}([F_{k}])= {} & \sum_{\substack{E\in S_{i-1}(L):\\E\in \partial F_{k}}}\frac{w(F_{k})}{w(E)} \theta^{k-1}+ \frac{w(F_{k})}{w(E_{k})}
 \sgn([E_{k}],\partial [F_{k}])\sgn([E_{k}],\partial F_{k-1})  \theta^{k-2}\\
 & + \frac{w(F_{k})}{w(E_{k+1})}
 \sgn([E_{k+1}],\partial [F_{k}])\sgn([E_{k+1}],\partial [F_{k+1}])  \theta^{k}\\
 ={} &(\frac{2}{2}+i-1) \theta^{k-1} - \frac{1}{2}\theta^{k-2}- \frac{1}{2}\theta^{k}\\
    = {} &\theta^{k-1} (i-\frac{ \theta^{-1}+\theta}{2})\\
      ={} &\theta^{k-1}(i-\cos(\frac{2\pi j}{m})).
\end{align*}
\normalsize
It is straightforward to check that a similar equality holds for $k=1$ and $k=m$.
Thus, $\lambda_{j}=i-\cos(2\pi j/n)$, where $j=0,1,\ldots m-1$ are the eigenvalues  of $\ldown(K)$.
 \end{proof}
\begin{rem} The eigenvalues of an orientable $i$-circuit depend only on its length, thus
there are different combinatorial structures which  give  the same eigenvalues of $\ldown$. For example, $ 1, 1.5, 1.5, 2.5, 2.5, 3$ are 
the eigenvalues of 
$\Delta_{2}^{down}$ of both    simplicial complexes,  in Figure \ref{circuit center},  and the simplicial complex in Figure \ref{circuit empty}.
\end{rem}
 A similar analysis can be carried out for a non-orientable
 $i$-circuit of length $m$. In that case we define $p$ to be
 $p([F_{k}])=[F_{k+1}]$, for $1\leq k < m$ and
 $p([F_{m}])=-[F_{1}]$. The remaining calculations are carried out as
 in Theorem \ref{circuit thm}. Thus, 

\begin{thm}
Let $K $ be a non-orientable $i$-circuit of length $m$. Then the eigenvalues of $\Delta_{i}^{down}(K)$ are $i-\sin(2\pi j/m)$  for $m$ even and $i+\cos(2\pi j/m)$ for $m$ odd, where $j=0,1,\ldots m-1$.
\end{thm}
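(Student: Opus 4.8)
The plan is to reproduce the proof of Theorem~\ref{circuit thm} almost verbatim, the only genuinely new ingredient being that non-orientability forces a single sign twist when the circuit closes up. First I would fix the ordering $F_{1}<F_{2}<\cdots<F_{m}$ of the $i$-simplices from Definition~\ref{circuit def} and orient them inductively: orient $F_{1}$ arbitrarily and then, for each $k$, orient $F_{k+1}$ coherently with $F_{k}$, so that $\sgn([E],\partial[F_{k}])\sgn([E],\partial[F_{k+1}])=-1$ with $E=F_{k}\cap F_{k+1}$ for every $1\le k<m$. Since $K$ is non-orientable this coherence must fail on exactly one pair, which by construction is the closing pair $F_{m},F_{1}$; there the two induced orientations agree, so $\sgn([E],\partial[F_{m}])\sgn([E],\partial[F_{1}])=+1$. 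As suggested in the preceding remark, I then use the signed cyclic shift $p$ with $p([F_{k}])=[F_{k+1}]$ for $1\le k<m$ and $p([F_{m}])=-[F_{1}]$, the extra minus sign being exactly what carries a coherent orientation through the defective seam.

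The central step is to prove $p\ldown=\ldown p$, which as in Theorem~\ref{circuit thm} reduces to checking \eqref{eq:perm:part1} and \eqref{eq:perm:part2}. Equation~\eqref{eq:perm:part1} is immediate: every $i$-face has weight $1$, every shared $(i-1)$-face has degree (hence weight) $2$ and every unshared one weight $1$, and $p$ cyclically permutes these face-types, so both sides equal the common diagonal value $i$. Equation~\eqref{eq:perm:part2} is the delicate point and is where I expect the real work to lie. On every coherent bulk pair both sign products equal $-1$ and the matching is identical to the orientable case; the subtlety is that $p$ sends the coherent pair $(F_{m-1},F_{m})$ to the defective seam pair $(F_{m},F_{1})$ and the seam pair to the coherent pair $(F_{1},F_{2})$, interchanging a $-1$ with a $+1$. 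In precisely these two exchanges the factor $-[F_{1}]$ flips the sign of $\partial p([F_{m}])$, and one must verify that this flip exactly reconciles the $+1/-1$ mismatch so that \eqref{eq:perm:part2} holds in every case. Confirming that this cancellation across the seam is exact is the main obstacle; everything else is bookkeeping.

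Granting commutativity, I would compute the spectrum of the signed shift matrix $P$. Iterating the definition gives $P^{m}=-I$, so its characteristic polynomial is $\lambda^{m}+1$ and its eigenvalues are the $m$ distinct $m$-th roots of $-1$, i.e.\ the numbers $\theta$ with $\theta^{m}=-1$ and argument $(2j+1)\pi/m$, $j=0,\ldots,m-1$, with eigenfunctions $u_{\theta}([F_{k}])=\theta^{k-1}$. Because these eigenvalues are simple, Lemma~\ref{Greenman1} guarantees that each $u_{\theta}$ is also an eigenfunction of $\ldown$. Evaluating $\ldown u_{\theta}$ at a bulk face exactly as in the displayed computation of Theorem~\ref{circuit thm} (diagonal $i$, neighbouring terms $-\tfrac12\theta^{k-2}$ and $-\tfrac12\theta^{k}$) gives
\begin{equation*}
\ldown u_{\theta}([F_{k}])=\theta^{k-1}\Bigl(i-\tfrac{\theta^{-1}+\theta}{2}\Bigr)=\theta^{k-1}\Bigl(i-\cos\tfrac{(2j+1)\pi}{m}\Bigr),
\end{equation*}
and I would also check, using $\theta^{m}=-1$, that the same eigenvalue is obtained when evaluating at the seam face $F_{1}$, which confirms consistency.

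It then remains to recast the eigenvalue $i-\cos\bigl(\tfrac{(2j+1)\pi}{m}\bigr)$, $j=0,\ldots,m-1$, into the two stated forms. For $m$ odd I would use $-\cos\bigl(\tfrac{(2j+1)\pi}{m}\bigr)=\cos\bigl(\pi-\tfrac{(2j+1)\pi}{m}\bigr)$ and reindex the odd residues modulo $2m$ to obtain the multiset $\{\,i+\cos(2\pi j/m)\,\}$, and for $m$ even a complementary-angle identity reindexes it as $\{\,i-\sin(2\pi j/m)\,\}$. This final step is purely trigonometric reindexing, but since it is exactly where the even/odd dichotomy in the statement comes from, I would carry out the two parities separately and keep careful track of multiplicities, as this is the place most prone to error.
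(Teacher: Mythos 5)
Your construction is exactly the paper's own proof: the paper disposes of this theorem in two sentences, saying only that one defines the twisted shift $p([F_{k}])=[F_{k+1}]$, $p([F_{m}])=-[F_{1}]$ and repeats the calculation of Theorem~\ref{circuit thm}, which is precisely what you fill in. Up to the penultimate step your argument is correct, and the seam issue you worry about is harmless: in your gauge the matrix of $\ldown$ is $iI-\tfrac12(P+P^{-1})$, where $P$ is the matrix of $p$, so commutativity is automatic, $P^{m}=-I$ holds, and the spectrum is the multiset $\{\,i-\cos((2j+1)\pi/m)\,\}_{j=0}^{m-1}$, exactly as you derive.

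The genuine gap is the step you explicitly deferred, and for half of the even lengths it cannot be closed, because the printed statement is false there. Writing $\sin(2\pi j/m)=\cos((m/2-2j)\pi/m)$, the integers $m/2-2j$ run over all odd residues mod $2m$ precisely when $m/2$ is odd; hence $\{\cos((2j+1)\pi/m)\}_{j}=\{\sin(2\pi j/m)\}_{j}$ holds for $m\equiv 2\pmod 4$ (and your odd-$m$ reindexing is fine), but for $m\equiv 0\pmod 4$ one gets instead $\{\sin(2\pi j/m)\}_{j}=\{\cos(2\pi j/m)\}_{j}$, i.e.\ the spectrum of the \emph{orientable} circuit, which is a different multiset: for $m=4$ your correct values give $\{\pm\tfrac{\sqrt{2}}{2},\pm\tfrac{\sqrt{2}}{2}\}$ while the stated formula gives $\{0,0,1,-1\}$. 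This is not vacuous: the M\"obius band triangulated as the $2$-circuit of length $8$ with triangles $\{1,5,2\},\{5,2,6\},\{2,6,3\},\{6,3,7\},\{3,7,4\},\{7,4,8\},\{4,8,5\},\{8,5,1\}$ is non-orientable, and its $\Delta_{2}^{down}$-spectrum is $\{2\pm\cos(\pi/8),\,2\pm\cos(3\pi/8)\}$, each value with multiplicity $2$, which does not agree with $\{2-\sin(\pi j/4)\}_{j=0}^{7}$ (the latter contains $2$ twice, the former does not contain $2$ at all). So you should stop at your intermediate formula $i-\cos((2j+1)\pi/m)$, which is the correct statement for all $m$ and is what the paper's sketch actually yields; the theorem's even-$m$ formula is valid only for $m\equiv 2\pmod 4$, and any attempt to "carry out the two parities carefully" will necessarily fail when $4\mid m$.
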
 

 \begin{coll}
 Eigenvalues of $\ldown(K)$ of an  $i$-path $K$ of length $m$ are  $\lambda_{k}=i-\cos(\pi k/m)$, for $k=0,\ldots,m-1$
 \end{coll}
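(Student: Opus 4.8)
The plan is to reduce the statement to the diagonalization of one explicit symmetric tridiagonal $m\times m$ matrix and then to produce its eigenvectors by a standing-wave ansatz, in direct analogy with the travelling-wave ansatz used for the circuit in Theorem~\ref{circuit thm}.

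First I would fix a coherent orientation $[F_1],\dots,[F_m]$ of the $i$-simplices and write the matrix $M$ of $\ldown(K)$ in the corresponding basis of elementary cochains, reading the entries off the explicit down-Laplacian formula together with the normalization $w(F_k)=1$ (each $F_k$ is a facet) and $w(E)=\deg E$ for every $(i-1)$-face $E$. An interior simplex $F_k$, with $2\le k\le m-1$, has two shared $(i-1)$-faces of degree $2$ and $i-1$ unshared faces of degree $1$, so its diagonal entry is $2\cdot\tfrac12+(i-1)=i$ and its two off-diagonal entries are $-\tfrac12$, the sign being forced by coherence exactly as in (\ref{eq:perm:part2}). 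The endpoints $F_1$ and $F_m$ each have only one shared face, so their diagonal entry is instead $i+\tfrac12$. Hence $M=iI-\tfrac12 C$, where $C$ carries $1$'s on the off-diagonals, $0$'s on the interior diagonal and $-1$'s in the two corners.

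I would then diagonalize $C$. For $Cu=\mu u$ the interior rows give the recurrence $u_{j-1}+u_{j+1}=\mu u_j$, solved by $u_j=\sin\!\big((j-\tfrac12)\theta\big)$ with $\mu=2\cos\theta$; the two corner rows are most cleanly encoded as ghost-node conditions $u_0=-u_1$ and $u_{m+1}=-u_m$. The first holds automatically for this ansatz, and the second collapses, via a sum-to-product identity, to $\sin(m\theta)\cos(\theta/2)=0$, so that $\theta=k\pi/m$. Back-substituting yields $\lambda=i-\tfrac12\mu=i-\cos(k\pi/m)$, which is the claimed family.

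The delicate point---and what genuinely separates the path from the circuit---is the accounting at the two endpoints. The path carries no cyclic symmetry, so the commuting-permutation shortcut of Lemma~\ref{Greenman1} is unavailable; the only symmetry is the reflection $F_k\mapsto F_{m+1-k}$, whose eigenvalues $\pm1$ are highly degenerate and therefore merely split $M$ into a symmetric and an antisymmetric tridiagonal block rather than diagonalizing it. One must therefore argue directly, and it is exactly the $+\tfrac12$ correction in the corner entries that converts the circuit's periodic condition into the reflecting condition $u_0=-u_1$, $u_{m+1}=-u_m$. Care is needed in recording which modes actually survive: the value $\theta=0$ produces the identically zero vector and must be discarded, leaving precisely $m$ admissible values of $\theta$, one per eigenvalue; this is the step at which the exact range of the index has to be pinned down. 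A convenient sanity check is the identity $\s(\mathcal{L}_i^{up})\dote\s(\mathcal{L}_{i+1}^{down})$ from (\ref{com2}): for $i=1$ it matches the down-spectrum of a $1$-path against the spectrum of the normalized graph Laplacian of the corresponding path graph.
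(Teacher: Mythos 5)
Your proposal is sound, and it takes a genuinely different route from the paper's. The paper does not touch the path's matrix directly: it invokes Theorem~\ref{circuit thm} for an orientable $i$-circuit of length $2m$, forms the symmetrized travelling waves $v_{k}=u_{k}+u_{2m-k}$, and asserts that the first $m$ entries of $v_{k}$ are eigenvectors of the path. Your route---writing the path matrix as $M=iI-\tfrac12 C$ and solving the tridiagonal problem with the half-integer standing wave $u_{j}=\sin\bigl((j-\tfrac12)\theta\bigr)$ and ghost conditions $u_{0}=-u_{1}$, $u_{m+1}=-u_{m}$---is self-contained, and your completeness bookkeeping (discard $\theta=0$, note that the $m$ surviving modes are distinct eigenvalues of an $m\times m$ symmetric matrix) is exactly right.

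The comparison matters because the two routes do not give the same answer, and yours is the correct one. Your surviving modes are $\theta=k\pi/m$ with $k=1,\dots,m$, so the spectrum is $\{\,i-\cos(k\pi/m)\,:\,k=1,\dots,m\,\}$, which contains $i+1$ and not $i-1$; the corollary as printed (range $k=0,\dots,m-1$) contains $i-1$ and omits $i+1$. The smallest case settles the discrepancy: for $i=1$, $m=2$ the path matrix in a coherently oriented basis is $\bigl(\begin{smallmatrix}3/2 & -1/2\\ -1/2 & 3/2\end{smallmatrix}\bigr)$, with eigenvalues $1$ and $2$, not $0$ and $1$. Your sanity check via (\ref{com2}) confirms this for all $m$ when $i=1$: since $\dim C^{1}=m=\dim\im\delta_{0}$, the operator $\Delta_{1}^{down}$ of the $m$-edge path has trivial kernel, so its spectrum must be the \emph{nonzero} part of the normalized graph Laplacian spectrum of the $(m+1)$-vertex path, which excludes $0$ and includes $2$. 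The precise point where the paper's argument breaks is the one you flagged as delicate: the restriction $x_{j}=2\cos\bigl((j-1)k\pi/m\bigr)$ of $v_{k}$ satisfies the interior recurrence but fails rows $1$ and $m$ of $M$ unless $\cos(k\pi/m)=-1$, because the path's corner diagonal entries are $i+\tfrac12$ rather than the circuit's $i$. The combination that does restrict correctly is the one \emph{antisymmetric} under the fold $F_{j}\mapsto F_{2m+1-j}$, whose first $m$ entries are proportional to your $\sin\bigl((j-\tfrac12)k\pi/m\bigr)$; antisymmetrization annihilates the $k=0$ mode and retains the $k=m$ mode, which is exactly the index shift. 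So your argument should be recorded not as an alternative proof of the corollary but as a correction of it.
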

 \begin{proof}
 Since there are no self-intersections of dimension $(i-1)$ in an $i$-path, every path is orientable.
From  Theorem \ref{circuit thm}, we conclude that  in the spectrum of the $i$-th down Laplacian of an $i$-circuit of length $2m$, all eigenvalues appear twice, except   $(i-1)$ and $(i+1)$.
In particular, 
$\lambda_{k}=i-\cos(k\pi/m)=i-\cos((2m-k)\pi/m)=\lambda_{2m-k}$, for $k\neq 0$ and $k\neq m$.
Let $\phi=\exp(ik\pi/m)$ (where here $i=\sqrt{-1}$ should not be
confused with the same symbol $i$ for the order of the Laplace operator), then 
the eigenvector corresponding to $\lambda_{k}$ is 
$u_{k}=(1, \exp(ik\pi/m,\ldots, \exp(i(2m-1)k\pi/m)^{T}$.

The function $v_{k}=u_{k}+u_{2m-k}$ is the eigenvector for the eigenvalue $\lambda_{k}$ as well 
$$
v_{k}(m)=\e^{i\frac{\pi k}{m}}+\e^{i\frac{\pi(2m- k)}{m}}=\e^{i\frac{\pi k}{m}}+\e^{-i\frac{\pi k}{m}}.
$$

It is now a straightforward calculation to see that the first $m$
entries of $v_{k}$, for every $k=0,1,\ldots  m-1$, constitute  the
eigenvectors of $K$ for the  eigenvalue   $i-\cos(\pi k/m)$.
\end{proof}
This idea generalizes to  paths with self-intersections of dimension $(i-1)$, but then it is necessary to distinguish among orientable and non-orientable paths. The eigenvalues of a star are described in the following theorem.
\begin{thm}
Let $K $ be a  simplicial complex  consisting of $m$  $i$-simplices
assembled in a star like formation, i.e., all simplices have  one
$(i-1)$-face in common. Then the non-zero eigenvalues of $\ldown(K)$
are  
$i$ with multiplicity $(m-1)$ and $(i+1)$ with multiplicity $1$.
\end{thm}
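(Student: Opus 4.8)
The plan is to write down the matrix of $\ldown$ in the natural basis of $C^{i}(K,\rb)$ and read off its spectrum directly. Since $K$ consists of exactly $m$ $i$-simplices $F_{1},\ldots,F_{m}$, the cochain space $C^{i}(K,\rb)$ is $m$-dimensional with basis $\{e_{[F_{1}]},\ldots,e_{[F_{m}]}\}$, so $\ldown$ is an $m\times m$ matrix and it suffices to compute its entries from the explicit formula for $\mathcal{L}^{down}_{i}$. First I would fix the combinatorial picture: write the common $(i-1)$-face as $E_{0}=\{v_{1},\ldots,v_{i}\}$ and each simplex as $F_{k}=\{v_{1},\ldots,v_{i},u_{k}\}$ for distinct outer vertices $u_{k}$. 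Then $F_{k}\cap F_{l}=E_{0}$ for all $k\neq l$, and the $(i-1)$-faces of $F_{k}$ are $E_{0}$ together with $i$ further faces, each of which lies in $F_{k}$ alone.

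Next I would determine the weights dictated by the normalization. The facets of $K$ are precisely the $F_{k}$, so $w(F_{k})=1$. For the $(i-1)$-faces one has $w(E)=\deg E$, and since facets carry weight $1$ the degree of such a face is just the number of $i$-simplices containing it. Hence $\deg E_{0}=m$, giving $w(E_{0})=m$, while each private face has degree $1$ and weight $1$. Plugging these into the diagonal term $\sum_{E\in\partial F_{k}}\frac{w(F_{k})}{w(E)}$ yields $i+\frac{1}{m}$, and for $k\neq l$ the off-diagonal entry is supported on the single shared face $E_{0}$, equal to $\frac{w(F_{l})}{w(E_{0})}\sgn([E_{0}],\partial[F_{k}])\sgn([E_{0}],\partial[F_{l}])$.

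For the signs I would orient each $F_{k}$ as $[v_{1},\ldots,v_{i},u_{k}]$, so that deleting the last vertex $u_{k}$ recovers $E_{0}$ with sign $(-1)^{i}$ independently of $k$; the product of the two signs is then $(-1)^{2i}=1$, so every off-diagonal entry equals $\frac{1}{m}$. Any other choice of orientations only conjugates the matrix by a diagonal sign matrix and hence leaves the spectrum unchanged. Therefore the matrix of $\ldown$ is $i\,I_{m}+\frac{1}{m}J_{m}$, where $I_{m}$ is the identity and $J_{m}$ the all-ones matrix. Using that $J_{m}$ has eigenvalue $m$ with multiplicity $1$ and eigenvalue $0$ with multiplicity $m-1$, the eigenvalues of $\ldown$ are $i+1$ once and $i$ with multiplicity $m-1$, which for $i\geq 1$ are precisely the nonzero eigenvalues claimed.

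The computation is elementary, so there is no deep obstacle; the only places demanding care are the bookkeeping of the normalization weights (remembering that the $F_{k}$ are facets and hence weighted $1$, while $w(E_{0})=\deg E_{0}=m$ and the private faces have weight $1$) and the verification that the sign product is $+1$, i.e. that one may orient all the $F_{k}$ so that $E_{0}$ occupies the same boundary position. Once these are settled, the reduction to the known spectrum of $J_{m}$ finishes the argument immediately.
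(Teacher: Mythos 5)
Your proof is correct, and it takes a genuinely different route from the paper in the key step. Both arguments rest on the same two computations: the normalized weights (the facets $F_{k}$ get weight $1$, the common face $E_{0}$ gets weight $\deg E_{0}=m$, the private $(i-1)$-faces get weight $1$), and the choice of orientations under which every pair of simplices induces the same orientation on $E_{0}$, so that all sign products are $+1$ --- your explicit choice $[F_{k}]=[v_{1},\ldots,v_{i},u_{k}]$ with sign $(-1)^{i}$ is a clean way to exhibit this, and your remark that any other orientation only conjugates the matrix by a diagonal sign matrix correctly disposes of orientation-dependence. Where you diverge is the diagonalization: the paper reuses the symmetry machinery it developed for circuits, namely it shows that the cyclic permutation $p([F_{k}])=[F_{k+1}]$ commutes with $\ldown$, invokes the commuting-matrices lemma, and evaluates $\ldown$ on the root-of-unity eigenvectors $u_{\theta}([F_{k}])=\theta^{k-1}$, where the geometric sum $1+\theta+\cdots+\theta^{m-1}$ vanishes for $\theta\neq 1$ (giving the eigenvalue $i$ with multiplicity $m-1$) and equals $m$ for $\theta=1$ (giving $i+1$). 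You instead identify the matrix outright as $i\,I_{m}+\frac{1}{m}J_{m}$ and quote the spectrum of the all-ones matrix $J_{m}$. The two diagonalizations are of course the same in disguise --- the root-of-unity vectors with $\theta\neq 1$ span the kernel of $J_{m}$ --- but your packaging is more elementary and self-contained (no commutation lemma needed), while the paper's buys uniformity: the identical permutation argument handles circuits, paths, and stars in one framework, including cases (like circuits) where the matrix is a genuine circulant rather than a rank-one perturbation of a scalar matrix and your shortcut would not apply.
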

\begin{proof}
Let $F_{k}$, $k\in \{1,\ldots,m\}$, be an $i$-dimensional face of $ K$ and let $\bigcap_{k}F_{k}=E$.
Let $p:B_{i}(K,\rb) \rightarrow B_{i}(K,\rb)$   be a permutation, such that $p([F_{k}])=[F_{k+1}]$. 
Since $F_{k}\cap F_{j}=E$, for any two $i$-faces of $K $,  we 
 can fix the orientations on the $F_{k}$ such that they induce the same orientation on  $E$. Now it is easy to check that
$$
p\ldown=\ldown p.
$$ 
Let $\theta$ denote  an $m$-th root of unity different from $1$ and $u$ the eigenvector of $p$ corresponding to it. Then we obtain
\begin{align*}
\ldown u_{\theta}([F_{k}])=& \sum_{E,E\in \partial F_{k}}\frac{w(F_{k})}{w(E)} \theta^{k-1}+ \sum_{F,F\neq F_{k}}\frac{w(F)}{w(E)}u_{\theta}([F])\\
 =& i\theta^{k-1}+\frac{1}{m}(1+\theta+\ldots+\theta^{m-1})\\
 =& i\theta^{k-1}.
\end{align*}
Thus, $u_{\theta}$ is an eigenfunction of $\ldown(K)$ corresponding to the eigenvalue $i$.
The case when $\theta=1$ results in  the eigenvalue $k+1$.
\end{proof}

\section{Regular simplcial complexes}
In this section we analyse the spectrum of the normalized Laplacian of
a regular simplicial complex, as defined in \cite{Meunier}. 
\begin{defn}
 A simplicial complex $K$ is $i$-regular iff  all its $i$-faces 
 have the same degree.
\end{defn}
Note that  a regular graph is a $0$-regular simplicial complex.
To characterize the eigenvalues of regular simplicial complexes, we introduce the notion of $i$-dual graph and $i$-path connected simplicial complexes.
\begin{defn}
\label{dual graph}
Let $K$ be a simplicial complex. Then a graph $G_{K}$ with the vertex set
$V=\{F_{j}\mid F_{j}\in S_{i}(K)\}$ and the edge set $E=\{(F_{j},F_{l})\mid F_{j}\cap F_{l}\in S_{i-1}(K)\}$ is called
an \emph{$i$-dual graph} of $K$.
Note that in graph theory dual graphs are called \emph{line graphs}.
\end{defn}
\begin{defn}
A simplicial complex $K$ is $i$-path connected  iff for any two $i$-faces $F_{1},F_{2}$ of $K$
 there exists an $i$-path connecting them.
\end{defn}
\begin{rem}
The definition of $i$-path connectedness   is different from the definition of
 $i$-connected simplicial complexes  in \cite{Kozlov}.
\end{rem}
\begin{figure}[h!tp]
  \begin{center}
    \subfigure[$2$-path connected simplicial complex and its $2$-dual graph]{\label{2connected}\includegraphics[width=4cm]{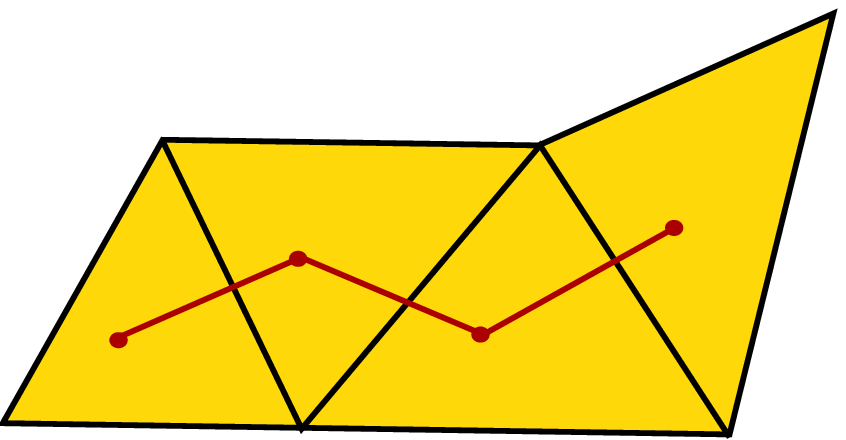}}\qquad
    \subfigure[Simplicial complex  which is not $2$-connected and its $2$-dual graph  ]{\label{2notconnected}\includegraphics[width=4cm]{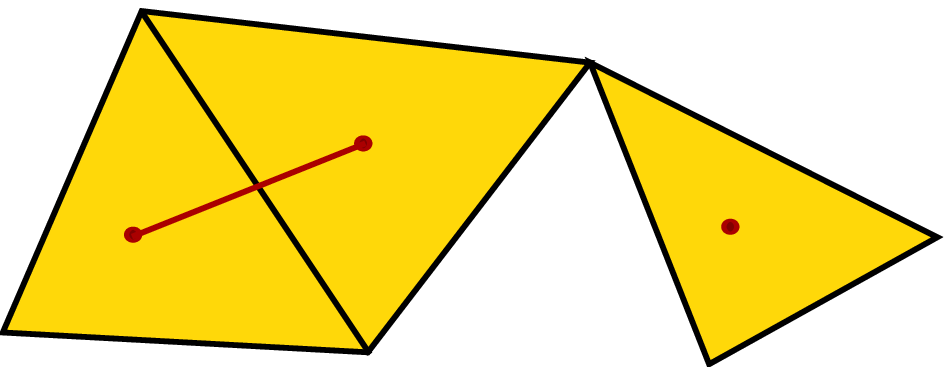}}
    \end{center}
  \caption{Examples of $i$-path connected simplicial complexes and their dual graphs}
\end{figure}
From now on, until the end of this section, we assume $K$ to be $i+1$-path connected.
\begin{thm}
\label{orientable regular sc}
 Let $K$ be an orientable $i$-regular simplicial complex, with $i$-simplices of degree $r$, and $G_{K}$ its $(i+1)$-dual graph.
Then for $r\neq 1$($r=2$)
$$ \lambda_{k}= \frac{(i+2)}{2}\mu_{k}, $$ 
where the $\lambda$'s are the eigenvalues of 
$\Delta_{i+1}^{down}$ and the $\mu$'s the eigenvalues of $\Delta_{0}^{up}(G_{K})$, both ordered non-decreasingly.
If $r=1$, then the only eigenvalue of $\Delta_{i+1}^{down}$ is  $ \lambda_{1}=(i+2)$. 
\end{thm}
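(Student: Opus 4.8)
The plan is to reduce the statement to an explicit matrix computation for $\Delta_{i+1}^{down}$ in a coherently oriented basis, and then to recognize the resulting matrix as a rescaled normalized graph Laplacian of $G_K$. First, by Remark \ref{pure sc dependencies} the operator $\Delta_{i+1}^{down}$ depends only on the $(i+1)$-skeleton, so I may assume $K$ is pure of dimension $i+1$. Then the $(i+1)$-faces are precisely the facets, so $w(\bar F)=1$, while every $i$-face $E$ is a non-facet, whence $w(E)=\deg E=r$ by the normalization condition. The first key observation is that orientability forces $r\in\{1,2\}$: if some $i$-face lay in three or more $(i+1)$-faces, then among those at least two would induce the same orientation on it (there being only two orientations), contradicting orientability; hence $\deg E=r\le 2$ for every $i$-face $E$. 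This is what the parenthetical ``$(r=2)$'' records.

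Next I would write down the matrix of $\Delta_{i+1}^{down}$ relative to the elementary cochains dual to a coherent orientation of the $(i+1)$-faces. Using the entrywise formulas for $\mathcal{L}^{down}_{i+1}$ together with $w(\bar F)=1$ and $w(E)=r$, the diagonal entry at $e_{[\bar F]}$ equals $\sum_{E\in\partial\bar F}1/r=(i+2)/r$, since $\bar F$ has exactly $i+2$ faces of dimension $i$. For the off-diagonal entries, coherence of the orientation yields $\sgn([E],\partial[\bar F])\sgn([E],\partial[\bar F'])=-1$ for neighbouring $(i+1)$-faces $\bar F,\bar F'$ sharing the $i$-face $E$, so each such entry equals $-1/r$. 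Thus
$$\Delta_{i+1}^{down}=\frac{i+2}{r}I-\frac{1}{r}A(G_K),$$
where $A(G_K)$ is the adjacency matrix of the $(i+1)$-dual graph.

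For $r=2$, the core step is to show that $G_K$ is $(i+2)$-regular. Each $(i+1)$-face $\bar F$ has $i+2$ boundary $i$-faces; with $r=2$ every such $i$-face lies in exactly one further $(i+1)$-face, and these neighbours are distinct because two distinct $(i+1)$-faces share at most one $i$-face (two distinct $i$-faces of $\bar F$ already span all $i+2$ vertices of $\bar F$, forcing equality of the containing simplices). Hence $\deg_{G_K}\bar F=i+2$ for every vertex, so the normalized graph Laplacian of $G_K$ has matrix $\Delta_0^{up}(G_K)=I-\frac{1}{i+2}A(G_K)$, i.e. $A(G_K)=(i+2)\bigl(I-\Delta_0^{up}(G_K)\bigr)$. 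Substituting into the displayed identity and simplifying gives $\Delta_{i+1}^{down}=\frac{i+2}{2}\Delta_0^{up}(G_K)$ as operators under the bijection of bases; since $\frac{i+2}{2}>0$ the spectra are scaled by this constant with multiplicities preserved, so the non-decreasing orderings match and $\lambda_k=\frac{i+2}{2}\mu_k$. For $r=1$ no two $(i+1)$-faces share an $i$-face, so $A(G_K)=0$ and the identity collapses to $\Delta_{i+1}^{down}=(i+2)I$, whose only eigenvalue is $i+2$.

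The main obstacle is the bookkeeping that turns the abstract operator into the clean matrix form: one must verify (a) that orientability uniformly pins the off-diagonal sign to $-1$, using that a coherent orientation induces opposite orientations on every shared $i$-face, and (b) that $G_K$ is genuinely $(i+2)$-regular with no multi-edges, which is where the ``at most one shared $i$-face'' argument enters. Everything else is algebraic substitution. I expect the standing $(i+1)$-path-connectedness assumption to play no role in the eigenvalue identity itself (it only guarantees that $G_K$ is connected), so I would not invoke it beyond, at most, noting that a global coherent orientation exists.
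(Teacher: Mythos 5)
Your proposal is correct and follows essentially the same route as the paper: choose a coherent orientation so that every off-diagonal sign product is $-1$, write the matrix as $\Delta_{i+1}^{down}=\frac{i+2}{r}I-\frac{1}{r}A$ with $A$ the adjacency matrix of the $(i+1)$-dual graph, use regularity of $G_{K}$ (degree $(r-1)(i+2)$) to express $A$ through $\Delta_{0}^{up}(G_{K})$, and substitute. The only point of divergence is how $r\leq 2$ is obtained: you derive it up front by the pigeonhole argument from the definition of orientability, whereas the paper assumes $r>1$, derives the affine relation $\lambda_{k}=\frac{(2-r)(i+2)}{r}+\frac{(r-1)(i+2)}{r}\mu_{k}$ for general $r$, and then forces $r=2$ from positive semidefiniteness of $\Delta_{i+1}^{down}$ together with $0\in\s(\Delta_{0}^{up}(G_{K}))$ --- though the paper itself remarks parenthetically that your combinatorial argument is an available alternative. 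Your extra care in verifying that $G_{K}$ has no multi-edges (two distinct $(i+1)$-simplices share at most one $i$-face) is a detail the paper glosses over but implicitly needs when identifying $A$ with the adjacency matrix of $G_{K}$.
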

\begin{proof}
Assume $r>1$.
 Since  the complex $K$ is orientable,  we can choose an orientation
 on the $(i+1)$-simplices of $K$, s.t. 
$\sgn([E],\partial [F])\sgn([E],\partial [F']=-1$, where $F$ and $F'$ are $(i+1)$-simplices and $E$ their intersecting
 face of dimension $i$. 
 Such oriented simplices uniquely determine a basis $B^{i+1}$ of $C^{i+1}$.
Hence, the matrix of the operator   $\Delta_{i+1}^{down}$ with respect
to $B^{i}$ is equal to 
$(i+2)/rI-1/r A$, where $A=(a_{ij})$ and $a_{ij}=1$ if the $(i+1)$-simplices $F_{i}$ and $F_{j}$ are $i$-down neighbours.
Assume $G_{K}$ is the $(i+1)$-dual graph of $K$, then $G_{K}$ is regular as well, and the degree of its vertices is 
$(r-1)(i+2)$. Furthermore, the adjacency matrix of $G_{K}$ equals $A$. Thus
$$\Delta_{i+1}^{down}= \frac{(2-r)(i+2)}{r}I + \frac{(r-1)(i+2)}{r}\Delta_{0}^{up}(G_{K}),$$ therefore 
$$ \lambda_{k}= \frac{(2-r)(i+2)}{r}+\frac{(r-1)(i+2)}{r}\mu_{k}. $$ 
The eigenvalue $0$ is in  $\s(\Delta_{0}^{up}(G_{K}))$, thus $(2-r)(i+2)/r$ must be in the spectrum of $\Delta_{i+1}^{down}(K)$.
Since the operator $\Delta_{i+1}^{down}$ is positive definite $(2-r)(i+2)/r\geq 0$, then $2\geq r$. Together 
with the assumption at the beginning $r>1$, we conclude that  $r$ must be equal to $2$ (another way to see that $r\leq 2$ is   from a definition of orientable simplcial complexes).
Finally,
$$ \lambda_{k}= \frac{(i+2)}{2}\mu_{k}. $$ 
If $r=1$, then $\Delta_{i+1}^{down}= (i+2)I$ and its only eigenvalue is $i+2$.
 \end{proof}
In other words, the $i$-up spectrum of the normalized Laplacian of  orientable $(i+1)$-dimensional pseudomanifolds is uniquely determined by the normalized spectrum of its dual graph.

From the previous theorem we obtain the following corollary.
\begin{coll}
Let $K$ be an $i$-regular, orientable, simplicial complex, with  eigenvalue $i+2$, then   the spectrum of $\Delta_{i+1}^{down}$ is symmetric about
$(i+2)/2$. 
\end{coll}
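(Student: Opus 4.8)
The plan is to read off the conclusion from Theorem \ref{orientable regular sc} together with the classical bipartite symmetry of the normalized graph Laplacian. Since the standing assumption of this section is that $K$ is $(i+1)$-path connected, its $(i+1)$-dual graph $G_{K}$ is connected, and Theorem \ref{orientable regular sc} (in the nondegenerate case $r=2$) gives the exact eigenvalue correspondence $\lambda_{k}=\tfrac{i+2}{2}\mu_{k}$, where $\lambda_{0}\le\cdots$ are the eigenvalues of $\Delta_{i+1}^{down}(K)$ and $\mu_{0}\le\cdots$ those of $\Delta_{0}^{up}(G_{K})$, each listed with multiplicity. Under this correspondence the hypothesis $i+2\in\s(\Delta_{i+1}^{down}(K))$ is equivalent to $2\in\s(\Delta_{0}^{up}(G_{K}))$, so the whole argument reduces to a statement about the normalized graph Laplacian of the connected graph $G_{K}$.

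Next I would invoke the classical fact that, for a connected graph, the largest eigenvalue of $\Delta_{0}^{up}$ equals $2$ if and only if the graph is bipartite. Thus $2\in\s(\Delta_{0}^{up}(G_{K}))$ forces $G_{K}$ to be bipartite, with vertex classes $A$ and $B$. For a bipartite graph I would then prove directly that the spectrum of $\Delta_{0}^{up}$ is symmetric about $1$: writing $\Delta_{0}^{up}=I-D^{-1}A$ as in (\ref{NGL}) and taking an eigenfunction $\Delta_{0}^{up}(G_{K})f=\mu f$, define $g$ by $g(v)=f(v)$ for $v\in A$ and $g(v)=-f(v)$ for $v\in B$. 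Since every edge of $G_{K}$ joins $A$ to $B$, a one-line computation gives $\Delta_{0}^{up}(G_{K})g=(2-\mu)g$. The map $f\mapsto g$ is an involution, so it identifies the $\mu$-eigenspace with the $(2-\mu)$-eigenspace; hence $\mu$ and $2-\mu$ occur in $\s(\Delta_{0}^{up}(G_{K}))$ with equal multiplicities.

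Finally I would transport this symmetry through the scaling $\lambda=\tfrac{i+2}{2}\mu$. If $\lambda$ occurs in $\s(\Delta_{i+1}^{down}(K))$ coming from $\mu$, then $(i+2)-\lambda=\tfrac{i+2}{2}(2-\mu)$ comes from $2-\mu$ and therefore also occurs, with the same multiplicity. Consequently $\s(\Delta_{i+1}^{down}(K))$ is invariant under the reflection $\lambda\mapsto(i+2)-\lambda$, which is precisely symmetry about $(i+2)/2$.

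The main obstacle is the step ``eigenvalue $2$ implies bipartite,'' which genuinely uses connectedness of $G_{K}$; this is exactly where the section-wide $(i+1)$-path connectedness hypothesis enters, and I would make that dependence explicit. A secondary point to dispose of is the degenerate case $r=1$ of Theorem \ref{orientable regular sc}, where $\Delta_{i+1}^{down}=(i+2)I$ has $i+2$ as its only eigenvalue and the stated symmetry fails; the corollary must therefore be read in the generic case $r=2$, which I would flag at the outset.
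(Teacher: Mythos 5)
Your proof is correct and is exactly the route the paper intends: the paper offers no separate argument for this corollary beyond ``From the previous theorem,'' and your reconstruction---using the correspondence $\lambda_{k}=\tfrac{i+2}{2}\mu_{k}$ of Theorem \ref{orientable regular sc} to reduce to the bipartite symmetry of $\Delta_{0}^{up}$ on the connected dual graph $G_{K}$, then transporting the reflection $\mu\mapsto 2-\mu$ back to $\lambda\mapsto (i+2)-\lambda$---is precisely that argument spelled out. Your flagging of the degenerate case $r=1$ (a single $(i+1)$-simplex meets all stated hypotheses, yet its spectrum $\{i+2\}$ is not symmetric) is a legitimate caveat that the paper itself leaves unaddressed.
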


\begin{thm}
\label{i+2 regular sc}
 Let $K$ be an  $i$-regular simplicial complex, with $i$-simplices of degree $r$, let  $G_{K}$ be its $(i+1)$-dual graph and 
$i+2\in \s(\Delta_{i+1}^{down})$.
Then
$$ \lambda_{k}= i+2 - \frac{(r-1)(i+2)}{r}\mu_{n-k}, $$ 
where the $\lambda$'s are the eigenvalues of 
$\Delta_{i+1}^{down}$ and the $\mu$'s the eigenvalues of $\Delta_{0}^{up}(G_{K})$ both ordered non-deacreasingly, and $n$ is the number of vertices of 
$G_{K}$.
\end{thm}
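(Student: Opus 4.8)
The plan is to reduce the statement to a computation with the adjacency matrix of $G_K$, exactly as in the proof of Theorem~\ref{orientable regular sc}, the only genuinely new ingredient being that the hypothesis $i+2\in\s(\Delta_{i+1}^{down})$ forces the relevant \emph{signed} adjacency matrix to be cospectral with the ordinary one. By Remark~\ref{pure sc dependencies} I may assume $K$ is pure of dimension $i+1$, so that the facets carry weight $1$ and every $i$-face $E$ has $w(E)=\deg E=r$. With respect to a fixed orientation of the $(i+1)$-faces the matrix of $\Delta_{i+1}^{down}$ then reads
$$\Delta_{i+1}^{down}=\frac{i+2}{r}\,I+\frac{1}{r}\,B,\qquad B_{F,F'}=\sgn([E],\partial[F])\,\sgn([E],\partial[F']),\ \ E=F\cap F',$$
with $B_{F,F'}=0$ when $F,F'$ are not $i$-down neighbours (in particular $B$ has zero diagonal, and since all facet weights equal $1$ the matrix is symmetric). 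Because $K$ is $i$-regular and $(i+1)$-path connected, $G_K$ is a connected $d$-regular graph with $d=(r-1)(i+2)$, its adjacency matrix $A$ satisfies $|B|=A$ entrywise, and $\Delta_0^{up}(G_K)=I-\tfrac1d A$, so the eigenvalues of $A$ are $\alpha=d(1-\mu)$ for $\mu\in\s(\Delta_0^{up}(G_K))$.

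The heart of the argument is to show that $B$ and $A$ share the same spectrum. For any unit vector $x$ one has
$$x^{T}Bx\le\sum_{F,F'}|B_{F,F'}|\,|x_F|\,|x_{F'}|=|x|^{T}A\,|x|\le d,$$
the last step by Perron--Frobenius applied to the connected $d$-regular graph $G_K$; this recovers that every eigenvalue of $\Delta_{i+1}^{down}$ is at most $i+2$. The hypothesis $i+2\in\s(\Delta_{i+1}^{down})$ says precisely that the top eigenvalue $d$ of $B$ is attained. Choosing a unit eigenvector $x$ with $Bx=dx$ turns both inequalities into equalities: the right equality forces $|x|$ to be the Perron vector of $A$, hence $|x_F|$ constant, so after scaling $x_F\in\{\pm1\}$; the left equality then forces $B_{F,F'}=x_Fx_{F'}$ on every edge. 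Writing $D=\mathrm{diag}(x)$ gives $B=DAD$, and since $D^{-1}=D$ this exhibits $B$ as conjugate to $A$, so $B$ and $A$ are cospectral.

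It remains to substitute. The eigenvalues of $\Delta_{i+1}^{down}$ are $\tfrac{i+2}{r}+\tfrac1r\alpha$ as $\alpha$ runs over $\s(A)$; inserting $\alpha=d(1-\mu)=(r-1)(i+2)(1-\mu)$ yields
$$\frac{i+2}{r}+\frac{(r-1)(i+2)}{r}(1-\mu)=i+2-\frac{(r-1)(i+2)}{r}\,\mu.$$
This is a strictly decreasing affine function of $\mu$, so ordering both spectra non-decreasingly reverses the index; this accounts for the $\mu_{n-k}$ in the claimed formula, with $n=\dim C^{i+1}$ the number of vertices of $G_K$.

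The only step I expect to require real care is the passage from $i+2\in\s(\Delta_{i+1}^{down})$ to $B=DAD$: it amounts to reading the hypothesis as a balancedness (switching-equivalence) condition on the signed graph carried by $B$ and extracting the diagonal sign matrix from the equality cases of the Perron--Frobenius estimate. Everything else parallels Theorem~\ref{orientable regular sc}, which is the special case $B=-A$ (coherently orientable $K$), and reduces to routine bookkeeping about the regularity of $G_K$ and the indexing.
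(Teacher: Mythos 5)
Your proof is correct, and it differs from the paper's at exactly the one step where the hypothesis $i+2\in\s(\Delta_{i+1}^{down})$ is used. The paper invokes Theorem~\ref{condition}: the presence of the eigenvalue $i+2$ is equivalent to the existence of an orientation of the $(i+1)$-simplices under which any two facets sharing an $i$-face induce the \emph{same} orientation on it; with that orientation every sign product is $+1$, so the matrix of $\Delta_{i+1}^{down}$ is literally $\frac{i+2}{r}I+\frac{1}{r}A$ and the identity $\Delta_{i+1}^{down}=(i+2)I-\frac{(r-1)(i+2)}{r}\Delta_{0}^{up}(G_{K})$ follows at once. You instead keep an arbitrary orientation, so the off-diagonal part is a signed adjacency matrix $B$, and you recover the re-orientation by pure linear algebra: the equality case of your Perron--Frobenius estimate yields a $\pm1$ vector $x$ with $B=DAD$, $D=\mathrm{diag}(x)$, so $B$ is switching-equivalent, hence cospectral, to $A$. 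This is precisely (one direction of) Theorem~\ref{condition} in the $(i+1)$-path connected, regular setting --- the signs $x_{F}$ \emph{are} the re-orientation --- so your argument is a self-contained substitute for it. What each buys: the paper's proof is two lines once Theorem~\ref{condition} is granted (note that Theorem~\ref{condition} only appears in Section~7, after this theorem, so the paper's proof rests on a forward reference), whereas yours is self-contained, makes explicit where connectedness of $G_{K}$ (the section's standing assumption of $(i+1)$-path connectedness) and regularity of $G_{K}$ are actually needed, and exhibits Theorem~\ref{orientable regular sc} as the companion case $B=-A$. Both proofs implicitly restrict to $K$ pure of dimension $i+1$ so that facets have weight $1$ and $w(E)=r$; you at least state this reduction explicitly via Remark~\ref{pure sc dependencies}.
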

\begin{proof}
 Since  $i+2\in \s(\Delta_{i+1}^{down}(K))$,  according to  Theorem \ref{condition}
 we can choose an orientation on the $(i+1)$-simplices of $K$, s.t. 
$\sgn([E],\partial [F])\sgn([E],\partial [F']=1$, for every $i$-down neighbours  $F$ and $F'$, where $F\cap F'=E$ and $\dim E=i$. 
The matrix of the operator  $\Delta_{i+1}^{down}$ is
\begin{equation}
\label{matrix equation}
\Delta_{i+1}^{down}=\frac{i+2}{r}I+\frac{1}{r}A,
\end{equation}
 where $A=(a_{ij})$, and 
\[
a_{ij}=\left\{ 
\begin{array}{ll}
1& \textrm{if } F \textrm{ and } F' \textrm{ are  } i \textrm{- down neighbours}\\
0& \textrm{otherwise.}

\end{array}
\right.
\] 
Since the degree of every vertex in the dual graph $G_{K}$ is $(r-1)(i+2)$, then 
$$\Delta_{i+1}^{down}= (i+2)I - \frac{(r-1)(i+2)}{r}\Delta_{0}^{up}(G_{K}).$$
\end{proof}
\begin{rem}
The eigenvalues of $\Delta_{i+1}^{down}$ are non-negative, hence $(i+2)-(r-1)(i+2)/r\mu_{n-k}\geq 0$, and 
\begin{equation}
\label{regular inequality 01}
 \frac{r}{r-1}\geq \mu_{n},
\end{equation}
where $\mu_{n}$ is the maximal eigenvalue of $\Delta_{0}^{up}(G_{K})$.
Inequality (\ref{regular inequality 01})  is always satisfied for  $r=2$.
\end{rem}

\section{Constructions and their effect on the spectrum: wedges,  joins and duplication of motifs }
\subsection{Wedges}
 
Let $(X_{i})_{i\in I}$ be a family of topological spaces and $x_{i}\in X_{i}$, then the wedge sum $\bigvee_{i} X_{i} $ is the quotient of their disjoint union by the identification $x_{i}\sim x_{j}$, for all $i,j\in I$, i.e.
$$
\bigvee_{i} X_{i}:= \bigsqcup_{i} X_{i}\;/ \;\{x_{i}\sim x_{j} \mid i,j \in I\}.
$$
For the purposes of this paper we define a combinatorial wedge sum, which is in many ways similar to the above  wedge sum.
\begin{defn}
\label{wedge def}
For simplicial complexes  $K_{1}$ and $K_{2}$ with vertex sets $[n]$ and  $[m]$, respectively,  and 
 $k$-simplices $F_{1}=\{v_{0},\ldots,v_{k}\}$ in $ S_{k}(K_{1})$ and $F_{2}=\{u_{0},\ldots,u_{k}\}$ in $S_{k}(K_{2})$,
 the \emph{ combinatorial $k$-wedge sum}   of $K_{1}$ and $K_{2}$ is an abstract simplicial complex on the vertex set $[m+n-k-1]$, such that
$$
K_{1}\vee_{k}K_{2}:=\{\{v_{i_{0}},\ldots,v_{i_{k}}\}\mid  \{v_{i_{0}},\ldots,v_{i_{k}}\}\in K_{1} \textrm{ or if }  \{u_{i_{0}},\ldots,u_{i_{k}}\}\in K_{2} \},
$$
where $u_{i_{j}}:=u_{l}$ if $v_{i_{j}}=v_{l} $,  $u_{i_{j}}:=v_{i_{j}}+k+1$ if $v_{i_{j}}>n$ 
and $u_{i_{j}}:=v_{i_{j}}$ for the other values of $v_{i_{j}}$.
This definition    generalizes in an obvious way  to the $k$-wedge sum of arbitrary many simplicial complexes.
\end{defn}
It is not difficult to check that $K_{1}\vee_{k}K_{2}$ is a simplicial complex, too. 
\begin{rem}
The combinatorial wedge sum $K_{1}\vee_{k}K_{2}$ can also be viewed as 
\begin{equation*}
K_{1}\sqcup K_{2}\;/ \;\{F_{1}\sim F_{2}\},
\end{equation*}
 where
$\sim$ is an equivalence relation which identifies the faces $F_{1}$ and $F_{2}$.
The combinatorial $k$-wedge sum  among graphs is a common notion in graph theory, although it is called by many different names: 
the combinatorial $0$-wedge sum of graphs is also known as vertex amalgamation \cite{ Gross2001}, coalescence \cite{GroneCoalescence} and join \cite{Jost}, whereas the combinatorial $1 $-wedge sum of graphs is called edge amalgamation.
\end{rem} 

Note that
 $K_{1}\vee_{k}K_{2}$, for arbitrary $k$, and the  
wedge sum of $K_{1}$ and $K_{2}$ as   topological spaces have isomorphic homology groups.
 From the homological point of view it is impossible to distinguish among $k$-wedge sums for different values of $k$ 
as well as among  different choices of the base points. 
 However, combinatorially, they are clearly different, see
 e.g. the 
 two wedge sums in  Figure \ref{wedgefig}.
\begin{figure}[h!tb]
\centering
\includegraphics[width=8cm]{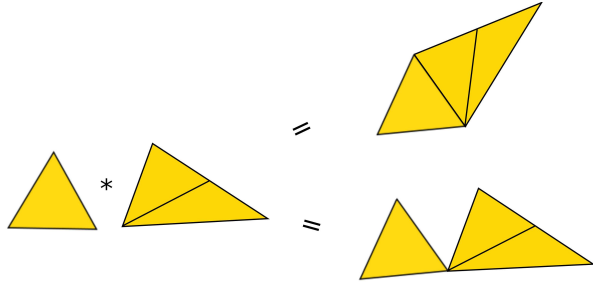}
\caption{The homology groups of the two spaces on the right are isomorphic, nonetheless these complexes are   combinatorially different.}
\label{wedgefig}
\end{figure}
Consequently, in a combinatorial $k$-wedge sum of simplicial complexes, it is important which complexes are identified as well as the dimension of these complexes.
The following theorem gives the first characterization of the effect of the wedge sum  on the spectrum of the Laplacian.
\begin{thm}
\label{thm wedges} 
$$
\s(\lup(K_{1}\vee_{k}K_{2}))\dote\s(\lup(K_{1}))\dotu \s(\lup(K_{2}))
$$
for all $i,k$ with   $0\leq k< i $.
\end{thm}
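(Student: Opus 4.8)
The plan is to exploit the fact that, because $k<i$, the combinatorial $k$-wedge sum leaves the high-dimensional structure of $K_1$ and $K_2$ completely disjoint. First I would observe that the only simplices of $K_1\vee_k K_2$ belonging to both copies are the faces of the identified $k$-simplex $F_1=F_2$, and these all have dimension at most $k$. Consequently, for every dimension $j>k$ the $j$-simplices split as a disjoint union $S_j(K_1\vee_k K_2)=S_j(K_1)\sqcup S_j(K_2)$. In particular this holds for $j=i$ and $j=i+1$, so the cochain spaces decompose as orthogonal direct sums $C^i(K_1\vee_k K_2)=C^i(K_1)\oplus C^i(K_2)$ and likewise in degree $i+1$, the two summands being spanned by the elementary cochains of the respective copies.

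Next I would check that the weight function of the normalized Laplacian agrees, on each summand, with the weight function of the corresponding factor. Since the degree of a face is the weighted sum over its cofaces, and every coface of an $i$- or $(i+1)$-simplex again has dimension $>k$ (hence lies entirely in one copy), a downward induction from the top dimension shows that $w(F)=\deg F$ and $w(\bar F)$ take the same values in $K_1\vee_k K_2$ as in $K_1$ (respectively $K_2$). By Remark~\ref{pure sc dependencies}, $\lup$ is determined by the $(i+1)$-skeleton together with these weights; reading off the explicit formula for $(\lup f)([F])$, every coface $\bar F\supset F$ and every up-neighbour $F'$ appearing in it lies in the same copy as $F$. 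Hence $\lup(K_1\vee_k K_2)$ preserves each summand $C^i(K_j)$ and restricts there to exactly $\lup(K_j)$, i.e.\ it is block diagonal:
\[
\lup(K_1\vee_k K_2)=\lup(K_1)\oplus\lup(K_2).
\]

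Finally, the spectrum of an orthogonal direct sum of self-adjoint operators is the multiset union of the spectra of the blocks, which yields $\s(\lup(K_1\vee_k K_2))=\s(\lup(K_1))\dotu\s(\lup(K_2))$ --- in fact as an honest equality of multisets, and a fortiori the claimed relation $\dote$. I do not expect a serious obstacle here: the one point requiring care is the weight bookkeeping in the second paragraph, ensuring that the normalization $w=\deg$ does not create any coupling between the two copies; but this is precisely guaranteed by the fact that the wedge is performed in a dimension strictly below $i$, so that no $i$- or $(i+1)$-simplex (nor any of its higher cofaces) is ever identified.
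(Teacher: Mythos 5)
Your proposal is correct and takes essentially the same route as the paper's own proof: decompose $C^{i}(K_{1}\vee_{k}K_{2},\rb)=C^{i}(K_{1},\rb)\oplus C^{i}(K_{2},\rb)$ for $i>k$, note that $\delta_{i}$ and $\delta_{i}^{*}$ preserve the summands, and conclude that $\lup(K_{1}\vee_{k}K_{2})$ is block diagonal with blocks $\lup(K_{1})$ and $\lup(K_{2})$. The only difference is that the paper states this in three lines and leaves the weight bookkeeping implicit, whereas you explicitly verify that the degrees (hence the normalizing weights) of $i$- and $(i+1)$-faces are unaffected by the identification in dimension $k<i$ --- a point worth making, since for the normalized Laplacian the scalar products themselves depend on the complex.
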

\begin{proof}

Since  $K_{1}$ and $K_{2}$ are identified by a face of dimension $k$,  then obviously, 
$C^{i}(K_{1}\vee_{k}K_{2},\rb)=C^{i}(K_{1},\rb) \oplus C^{i}(K_{2},\rb)$ for every $i>k$. Thus, the coboundary mapping
$
\delta_{i}:C^{i}(K_{1}\vee_{k}K_{2},\rb)\rightarrow C^{i+1}(K_{1}\vee_{k}K_{2},\rb)
$
will map $C^{i}(K_{j},\rb)$ to $C^{i+1}(K_{j},\rb)$, $j=1,2$ and the same holds for the adjoint  $\delta^{*}_{i}$.
\end{proof}
The operator  $\lup$ is uniquely  determined  by  the $i$ and $(i+1)$-faces  of $K$.
Hence its non-zero eigenvalues depend only on the structure of the  $(i+1)$-faces of $K$.
By abuse of notation, let $S_{i+1}(K)$ determine a pure $(i+1)$-dimensional subcomplex of $K$, whose facet set is 
$S_{i+1}(K)$. Then, there exist $k_{1},\ldots,k_{m-1}<i$, and simplicial complexes $K_{1},\ldots, K_{m}$, such that 
\begin{equation}
\label{wedgesum}
S_{i+1}(K)=K_{1}\vee_{k_{1}}K_{2}\vee_{k_{2}}\ldots\vee_{k_{m-1}}K_{m},
\end{equation}
i.e.
$$
\s(\lup(K))\dote \s(\lup(K_{1}))\dotu \ldots \dotu \s(\lup(K_{m})).
$$
Therefore, when studying $\lup$,  it is  useful to determine if $K$ can be represented as a combinatorial $k$-wedge sum 
of simplicial complexes and if so, how many of them there are.
One possible way  to answer this question  is via the   $(i+1)$-dual graph of $K$.
The number of complexes in the wedge sum (\ref{wedgesum})  is exactly
the number of connected components of the $(i+1)$-dual graph of $K$. 
It is also equal to the number of   $(i+1)$-path connected components.

\begin{rem}
If $K$ is an $(i+1)$- path connected simplicial complex, it  cannot be decomposed into a combinatorial $k$-wedge ($k<i$) of simplicial complexes.
 \end{rem}
 We collect the    above  observations in the   following proposition.
 \begin{prop}
 \label{prop wedges}
 The following statements are equivalent.
 \begin{itemize}
 \item[(i)] $S_{i+1}(K)\cong K_{1}\vee_{k_{1}}K_{2}\vee_{k_{2}}\ldots\vee_{k_{m-1}}K_{m}$, where $k_{1},\ldots,k_{m-1}<i$ and  $K_{1},\ldots, K_{m}$ are simplicial complexes.
 \item[(ii)] The $(i+1)$-dual graph $G_{K}$ of $K$ has $m$ connected components.
 \item[(iii)] The number of $(i+1)$-path connected components of  $K$ is equal to $m$.
 \end{itemize}
 \end{prop}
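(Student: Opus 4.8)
The plan is to prove the three statements equivalent by routing everything through the combinatorial meaning of an edge of the $(i+1)$-dual graph $G_{K}$: two vertices of $G_{K}$, i.e.\ two $(i+1)$-simplices, are adjacent precisely when they are $i$-down neighbours, that is, when they share a common $i$-face. The single technical fact underlying all three implications is that a combinatorial $k$-wedge identification with $k<i$ glues the two summands along a face of dimension at most $i-1$, and therefore never places an $i$-face into two different summands; equivalently, it never introduces an edge of $G_{K}$ between the two corresponding vertex sets. I would establish the easy equivalence (ii)$\Leftrightarrow$(iii) first, and then treat (i)$\Leftrightarrow$(ii).

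For (ii)$\Leftrightarrow$(iii) the argument is essentially a translation of definitions. By Definition \ref{dual graph} an edge of $G_{K}$ is exactly a pair of $i$-down-neighbour $(i+1)$-simplices, which is one step of an $(i+1)$-path in the sense of Definition \ref{circuit def}. Hence a walk in $G_{K}$ and an $(i+1)$-path in $K$ carry the same data, and two $(i+1)$-simplices are $(i+1)$-path connected if and only if the corresponding vertices of $G_{K}$ lie in the same connected component. The connected components of $G_{K}$ therefore biject with the $(i+1)$-path connected components of $K$, so their cardinalities agree.

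For (i)$\Rightarrow$(ii), suppose $S_{i+1}(K)\cong K_{1}\vee_{k_{1}}\cdots\vee_{k_{m-1}}K_{m}$ with each $k_{j}<i$. Two $(i+1)$-simplices lying in distinct summands can only meet inside one of the wedge faces, hence in a face of dimension $\le \max_{j}k_{j}\le i-1$; in particular they are never $i$-down neighbours, so $G_{K}$ carries no edge between the vertex sets of distinct summands. Invoking the preceding Remark --- an $(i+1)$-path connected complex admits no nontrivial $k$-wedge decomposition with $k<i$ --- each summand $K_{j}$ of the finest such decomposition is $(i+1)$-path connected, whence each dual graph $G_{K_{j}}$ is connected. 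Thus $G_{K}$ is the disjoint union of the $m$ connected graphs $G_{K_{j}}$ and has exactly $m$ components.

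The converse (ii)$\Rightarrow$(i) is where I expect the real work to lie. Given the $m$ connected components of $G_{K}$, I would let $K_{j}$ be the pure $(i+1)$-dimensional subcomplex generated by the $(i+1)$-simplices in the $j$-th component; by the definition of $G_{K}$ no $i$-face is shared between distinct $K_{j}$, so every intersection $K_{a}\cap K_{b}$ lives in dimension $<i$. The obstacle is that Definition \ref{wedge def} identifies two complexes along a \emph{single} $k$-face, whereas a priori two components might share several lower-dimensional faces, so one cannot simply wedge along one simplex. To close the argument I would show that the shared low-dimensional skeleton can always be assembled by a \emph{sequence} of wedge identifications, each along an individual face of dimension $<i$, proceeding by induction on $m$ (peeling off one component at a time) and, within each gluing, by induction on the number of shared faces; throughout, the key invariant that no $i$-face is ever identified is guaranteed by the absence of edges of $G_{K}$ between components. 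Carrying out this bookkeeping rigorously, rather than the underlying topological content, is the main difficulty.
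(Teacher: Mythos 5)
Your equivalence (ii)$\Leftrightarrow$(iii) is fine, and your instinct that all the substance lies in (ii)$\Rightarrow$(i) is also correct; note that the paper offers no proof of this proposition at all (it merely ``collects the above observations''), so that direction was never argued by the authors either. But the step you postpone as ``bookkeeping'' is not bookkeeping: it is where the statement actually breaks, and it cannot be carried out. An iterated wedge of $m$ summands performs exactly $m-1$ identifications, each along a \emph{single} face (Definition~\ref{wedge def}), whereas two $(i+1)$-path connected components may share arbitrarily many faces of dimension $<i$. Concretely, take $i=1$ and let $K$ consist of the four triangles $\{u,a,b\}$, $\{v,a,b\}$, $\{u,c,d\}$, $\{v,c,d\}$. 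Its $2$-dual graph has exactly two components, corresponding to the strips $A=\cl(\{u,a,b\}\cup\{v,a,b\})$ and $B=\cl(\{u,c,d\}\cup\{v,c,d\})$, which share the \emph{two} vertices $u$ and $v$. Suppose $K\cong K_{1}\vee_{0}K_{2}$ with both summands nontrivial. Under such an isomorphism the images of $K_{1}$ and $K_{2}$ are subcomplexes of $K$ that cover $K$ and meet in exactly one vertex; two triangles sharing an edge can never lie in different summands, so one image contains $A$ and the other contains $B$, whence their intersection contains both $u$ and $v$ --- a contradiction. (Equivalently: the reduced homology of a wedge is the direct sum of that of its summands, yet $H_{1}(K)\cong\mathbb{Z}$ while $A$ and $B$ are contractible.) No induction ``on the number of shared faces'' can circumvent this, because the number of available single-face identifications is capped at $m-1$.

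There is also a circularity in your (i)$\Rightarrow$(ii): you invoke the paper's Remark to conclude that the summands of the finest decomposition are $(i+1)$-path connected, but the Remark asserts the opposite implication (path connected $\Rightarrow$ indecomposable); what you need is its converse, and the example above refutes that converse ($K$ admits no nontrivial wedge decomposition, hence the maximal $m$ in (i) is $1$, while the number of dual components is $2$ --- so (i)$\Rightarrow$(ii) fails as well). The honest conclusion is that Proposition~\ref{prop wedges}, read literally with Definition~\ref{wedge def}, is false, so the gap in your attempt is unfixable rather than technical. What does survive, and what the paper actually uses afterwards, is the spectral statement: since distinct $(i+1)$-path components share no $i$-face, one still has $\s(\lup(K))\dote\s(\lup(K_{1}))\dotu\ldots\dotu\s(\lup(K_{m}))$, where the $K_{j}$ are the closures of the $(i+1)$-path components. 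To get a true equivalence in the form (i), the single-face wedge would have to be replaced by a gluing along an arbitrary subcomplex of dimension $<i$, which is what the component closures actually realize.
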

 
The analysis  on the combinatorial wedge sum above does not depend on the choice of the scalar products. Hence Theorem 
\ref{thm wedges} 
and Proposition \ref{prop wedges} hold for the general Laplace operator $\mathcal{L}$ as well.
In the remainder of this section we   investigate the effect  of the $k$-wedge sum for $i=k$   
on the spectrum of the (weighted)
 normalized combinatorial Laplacian $\lup$.
 
\begin{thm}
\label{join simple}
Let $K_{1}$ and $K_{2}$ be simplicial complexes, for which  the
spectra  of $\lup(K_{1})$ and $\lup(K_{2})$ both contain the eigenvalue $\lambda$,  and  let $f_{1}$, $f_{2}$ be their corresponding eigenfunctions.
If an $i$-wedge $K:=(K_{1}\vee_{i}K_{2})$ is obtained by identifying  $i$-faces $F_{1}$ and $F_{2}$, for which 
  $f_{1}([F_{1}])=f_{2}([F_{2}])$, then the spectrum of $\lup (K) $  
   contains the eigenvalue $\lambda$, too.
\end{thm}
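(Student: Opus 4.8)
The plan is to exhibit an explicit eigenfunction of $\lup(K)$ for the eigenvalue $\lambda$ by gluing $f_1$ and $f_2$ along the identified face. First I would record the combinatorial structure of $K=K_1\vee_i K_2$ that is relevant to $\lup$. By Remark \ref{pure sc dependencies} the operator $\lup$ depends only on the $i$- and $(i+1)$-faces, so I may assume $K_1,K_2$ are pure of dimension $i+1$. Writing $F$ for the identified face, the decisive observations are: (a) since we glue only an $i$-face, whose two $(i+1)$-cofaces on either side differ in their extra vertex, no $(i+1)$-face of $K$ is created or identified, so the $(i+1)$-faces of $K$ are the disjoint union of those of $K_1$ and $K_2$; and (b) consequently $\deg_K F=\deg_{K_1}F_1+\deg_{K_2}F_2$, while every other $i$-face keeps the cofaces, up-neighbours and degree it had in its home complex.

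Next I would define $f\in C^i(K,\rb)$ by setting $f=f_1$ on the $i$-faces coming from $K_1$, $f=f_2$ on those coming from $K_2$, and $f([F])=f_1([F_1])=f_2([F_2])$ on the identified face; the hypothesis $f_1([F_1])=f_2([F_2])$ is exactly what makes this well defined, and nonvanishing of $f$ follows from that of $f_1$. Orienting $F$ so that the identification carries its orientation coherently onto the orientations used for $f_1([F_1])$ and $f_2([F_2])$, the signs $\sgn([F],\partial[\bar F])$ agree with the corresponding home-complex signs for every coface $\bar F$.

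Then I would verify $\lup(K)f=\lambda f$ face by face using the explicit formula for $\mathcal{L}_i^{up}$. For an $i$-face $G\neq F$, say from $K_1$, its diagonal coefficient is $1$ (since $w(G)=\deg_K G=\deg_{K_1}G$ for the normalized weights), and its up-neighbours and their coefficients are precisely those in $K_1$ — even when $F$ itself is an up-neighbour, because $f([F])=f_1([F_1])$ — so $(\lup(K)f)([G])=(\lup(K_1)f_1)([G])=\lambda f([G])$. The substantive case is $G=F$, where I would split the off-diagonal sum over up-neighbours into its $K_1$- and $K_2$-parts and rewrite the home-complex eigenvalue equations $\lup(K_j)f_j([F_j])=\lambda f_j([F_j])$ as
\begin{equation*}
\sum_{F'\text{ from }K_j}\sgn([F_j],\partial[\bar F])\,\sgn([F'],\partial[\bar F])\,f_j([F'])=\deg_{K_j}F_j\,(\lambda-1)\,f_j([F_j]).
\end{equation*}
Inserting these into $(\lup(K)f)([F])=f([F])+\tfrac{1}{\deg_K F}\big(\text{$K_1$-sum}+\text{$K_2$-sum}\big)$, the common value $f([F])=f_1([F_1])=f_2([F_2])$ factors out, and the weighted combination collapses via $\deg_K F=\deg_{K_1}F_1+\deg_{K_2}F_2$ to yield $(\lup(K)f)([F])=\lambda f([F])$.

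The crux — and the only place the hypotheses are genuinely used — is this last collapse: the normalized weight of $F$ in $K$ is the sum of its weights in $K_1$ and $K_2$, so the two "$(\lambda-1)$-contributions" average exactly to $(\lambda-1)f([F])$, and this works only because $f_1$ and $f_2$ take the same value on the glued face. I expect the main obstacle to be the bookkeeping of the orientation and sign conventions across the identification, so that each partial sum really matches the Laplacian of its home complex; once that is pinned down, the remainder is the short algebraic identity displayed above.
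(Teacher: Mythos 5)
Your proposal is correct and follows essentially the same route as the paper's own proof: glue $f_1$ and $f_2$ into a single cochain (well defined by the hypothesis $f_1([F_1])=f_2([F_2])$), observe that away from the identified face $\lup(K)$ acts as the home-complex Laplacians, and at the identified face split the sum into its $K_1$- and $K_2$-parts and use $\deg_K F=\deg_{K_1}F_1+\deg_{K_2}F_2$ so that the two eigenvalue equations combine into a weighted average equal to $\lambda f([F])$. Your added care with orientations and with the disjointness of the $(i+1)$-faces only makes explicit what the paper leaves implicit; the core computation is identical.
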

\begin{proof}
We will prove that 
$$g([F])=\left\{  \begin{array}{ll}
f_{1}([F]) & \textrm{ for every } F \textrm{ which is an } i\textrm {-face of } K_{1} \textrm{ different from } F_{1}\\
f_{2}([F]) & \textrm{ for every } F \textrm{  which is an } i\textrm{-face of } K_{2} \\
\end{array}
\right.$$
is an eigenfunction of $\lup(K)$ corresponding to the  eigenvalue $\lambda$.
For an $i$-dimensional face $F$   of $K_{1}$ different from $F_{1}$, the following equality  holds
$$
\lup(K)\mid_{K_{1}-F_{1}}f_{1}([F])=\lambda f_{1}([F]).
$$
Similar is true when   $F\in S_{i}(K_{2})$,  $F\neq F_{2}$, i.e.
$$
\lup(K)\mid_{K_{2}-F_{2}}f_{2}([F])=\lambda f_{2}([F]).
$$
Let $w_{K_{1}}$ and $w_{K_{2}}$ denote the weight functions on the complexes $K_{1},K_{2}$ respectively.
Since we investigate $\lup$,  the weights of the $i$-simplices are
uniquely determined by the weights of the $(i+1)$-simplices and the incidence relations among them. Thus, for  the weight (degree) of the simplex $F=F_{1}=F_{2}$, we have
$w_{K}(F)=w_{K_{1}}(F_{1})+w_{K_{1}}(F_{2})$, whereas the weights of
all other simplices from $K_1$ or $K_2$ will remain the same in $K$.
Hence
\small
\begin{align*}
\lup(K)f([F])={} &\frac{1}{w_{K_{1}}(F_{1})+w_{K_{2}}(F_{2})}\sum_{\bar{F}\in S_{i+1}(K_{1}) } w_{K_{1}}(\bar{F})\sgn([F],\partial [\bar{F}])f(\partial [\bar{F}])\\
&+ \frac{1}{w_{K_{1}}(F_{1})+w_{K_{2}}(F_{2})}\sum_{\bar{F}\in S_{i+1}(K_{2}) } w_{K_{2}}(\bar{F})\sgn([F],\partial [\bar{F}])f_{2}(\partial [\bar{F}])\\
={}& \frac{w_{K_{1}}(F_{1})}{w_{K_{1}}(F_{1})+w_{K_{2}}(F_{2})} \frac{1}{w_{K_{1}}(F_{1})}\sum_{\bar{F}\in S_{i+1}(K_{1}) } w_{K_{1}}(\bar{F})\sgn([F],\partial [\bar{F}])f(\partial [\bar{F}])\\
&+ \frac{w_{K_{2}}(F_{2})}{w_{K_{1}}(F_{1})+w_{K_{2}}(F_{2})} \frac{1}{w_{K_{2}}(F_{2})}\sum_{\bar{F}\in S_{i+1}(K_{2}) } w_{K_{2}}(\bar{F})\sgn([F],\partial [\bar{F}])f_{2}(\partial [\bar{F}])\\
={} & \frac{w_{K_{1}}(F_{1})}{w_{K_{1}}(F_{1})+w_{K_{2}}(F_{2})} \lambda f_{1}([F])+ \frac{w_{K_{2}}(F_{2})}{w_{K_{1}}(F_{1})+w_{K_{2}}(F_{2})} \lambda f_{2}([F])\\
={}& \lambda f([F]).
\end{align*}
\normalsize
\end{proof}
This also includes the case when either $f_{1}$ or $f_{2}$ is identically equal to zero.
\begin{rem}
The previous  theorem will hold for the weighted normalized Laplacian  if the weight function 
$w_{K}:\bigcup_{k} S_{k}(K)\rightarrow \rb^{+}$ 
is 
\small
$$w_{K}(F)=\left\{  \begin{array}{ll}
w_{K_{1}}(F) & \textrm{ if  } F \textrm{  is a face of } K_{1} \textrm{ and } \dim F>i\\
w_{K_{2}}(F) & \textrm{ if  } F \textrm{  is a face of } K_{2} \textrm{ and } \dim F>i\\
\displaystyle\sum_{\substack{\bar{F_{1}}\in K_{1}:\\F\in \partial \bar{F_{1}}}} w_{K_{1}}(\bar{F_{1}}) + 
\sum_{\substack{\bar{F_{2}}\in K_{2}:\\F\in \partial \bar{F_{2}}}} w_{K_{2}}(\bar{F_{2}})  
& \textrm{ if  } F \textrm{  is a face of } K \textrm{ and  } \dim F\leq i\\
\end{array}
\right.$$
\normalsize
\end{rem}
\begin{exmp}
Let $\sigma_{1}$ be an $i$-simplex, then  $\s(\ldown(\sigma))\dote \s(\Delta^{up}_{i-1}(\sigma))\dote \{i+1\}$.
A function which is equal to $1$ on every oriented simplex in the boundary of
 $[\sigma]$ will be an eigenfunction of $\ldown$ corresponding to $(i+1)$.\\
According to  Theorem \ref{join simple},  an $(i-1)$-wedge of any
number of $i$-simplices will possess the  eigenvalue $(i+1)$,
 as long as we are able to orient them such that  any two  simplices whose intersection is of dimension  $i$ induce the same orientation on their intersecting face. 
For an alternative proof of this claim see Theorem \ref{i+2 eigenvalue}.
\end{exmp}
Theorem \ref{join simple}  identifies some of the eigenvalues of the   combinatorial wedge sum. 
However,  the results   obtained by using the interlacing theorem for simplicial maps, as shown in the next theorem, are more comprehensive.
\begin{thm}
Let $\mu_{1},\ldots,\mu_{m}$ be  the eigenvalues of  $\lup(K_{1}\cup
K_{2})$ and $\lambda_{1},\ldots,\lambda_{m-1}$ the eigenvalues of $\lup(K)$, where $K:=(K_{1}\vee_{i}K_{2})$, then
$$\mu_{i}\leq \lambda_{i}\leq \mu_{i+1}$$ for every $0\leq i \leq m-1$.
\end{thm}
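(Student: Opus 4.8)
The plan is to realize $\lup(K)$ as a codimension-one compression of $\lup(K_1\cup K_2)$ and then invoke Cauchy--Poincar\'e interlacing, which is itself a consequence of the min--max characterization (Theorem~\ref{min-max theorem}). Here $K_1\cup K_2$ is the disjoint union, so $\lup(K_1\cup K_2)$ is block diagonal and $\dim C^i(K_1\cup K_2)=m=|S_i(K_1)|+|S_i(K_2)|$, whereas identifying $F_1$ with $F_2$ leaves $\dim C^i(K)=m-1$; this accounts for the two eigenvalue lists of lengths $m$ and $m-1$. By Remark~\ref{pure sc dependencies} I may restrict to the $(i+1)$-skeleton, so that the facet weights equal $1$, $W_{i+1}=I$, and $\lup=W_i^{-1}D^{T}D$, where $D$ is the matrix of $\delta_i$.

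First I would set up the quotient map. I orient $[F_1]$ and $[F_2]$ so that both descend to the same orientation $[F]$ in $K$, and let $M\colon C^i(K)\to C^i(K_1\cup K_2)$ be the pullback along the simplicial quotient $q\colon K_1\cup K_2\to K$: it fixes every elementary cochain on an $i$-face distinct from $F$ and sends $e_{[F]}\mapsto e_{[F_1]}+e_{[F_2]}$. Since the $(i+1)$-faces of $K$ are exactly the disjoint union of those of $K_1$ and $K_2$, and an $(i+1)$-face lying in $K_j$ meets only the $K_j$-copy of the glued face, the coboundary factors as $\tilde D = D\,M$, where $\tilde D$ is the coboundary matrix of $K$. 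Consequently $\tilde D^{T}\tilde D = M^{T}D^{T}D\,M$.

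Next I would symmetrize. The operators $\lup(K_1\cup K_2)=W_i^{-1}D^{T}D$ and $\lup(K)=\tilde W_i^{-1}\tilde D^{T}\tilde D$ are conjugate (via $W^{1/2}$) to the symmetric matrices $S=W_i^{-1/2}D^{T}D\,W_i^{-1/2}$ and $\tilde S=\tilde W_i^{-1/2}\tilde D^{T}\tilde D\,\tilde W_i^{-1/2}$, with identical spectra. Substituting the factorization gives $\tilde S = N^{T}S\,N$ with $N:=W_i^{1/2}M\,\tilde W_i^{-1/2}$, an $m\times(m-1)$ matrix.

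The crux is to show $N^{T}N=I_{m-1}$, i.e.\ that $N$ is an isometric embedding; the interlacing $\mu_i\le\lambda_i\le\mu_{i+1}$ then follows immediately by applying the min--max theorem to the compression of $S$ onto the codimension-one subspace $\im N$. Since $N^{T}N=\tilde W_i^{-1/2}(M^{T}W_iM)\tilde W_i^{-1/2}$, it suffices to verify $M^{T}W_iM=\tilde W_i$, and this is exactly where the normalization is essential. As $W_i$ is diagonal with the $i$-face degrees on the diagonal, distinct columns of $M$ have disjoint support, so the off-diagonal entries of $M^{T}W_iM$ vanish; the $F$-diagonal entry equals $w_{K_1}(F_1)+w_{K_2}(F_2)=\deg F_1+\deg F_2$, which by the definition of $\lup$ is precisely $\deg_K F=\tilde W_i(F)$, while all other diagonal entries are unchanged. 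Hence $M^{T}W_iM=\tilde W_i$ and $N^{T}N=I_{m-1}$. The main obstacle is therefore not an inequality but this bookkeeping identity: confirming that gluing the two $i$-faces \emph{adds} their degrees and that, under $w=\deg$, this is exactly the condition making $q$ weight-preserving, hence $N$ an isometry. Everything else is the standard interlacing argument, and the construction is manifestly the special case of the interlacing theorem for simplicial maps applied to $q$.
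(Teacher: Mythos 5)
Your proof is correct, and it is worth comparing with the paper's, because the two differ in what they take for granted. The paper's proof is essentially one line: it constructs the same quotient simplicial map $q\colon K_{1}\cup K_{2}\to K_{1}\vee_{F_{1}\sim F_{2}}K_{2}$ that you call $q$, and then invokes, as a black box, the interlacing theorem for simplicial maps from the reference \cite{Horak}, which yields $\mu_{i}\leq\lambda_{i}\leq\mu_{i+k}$ with $k=\lvert S_{i}(K_{1}\cup K_{2})\rvert-\lvert S_{i}(K)\rvert=1$. What you have done is re-derive the relevant special case of that cited theorem from scratch: the factorization $\tilde D=DM$ through the pullback, the symmetrization, the identity $M^{T}W_{i}M=\tilde W_{i}$ making $N$ an isometric embedding, and then Cauchy interlacing for a codimension-one compression. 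This buys two things the paper's proof does not make visible: (a) self-containedness, since the reader need not consult \cite{Horak}; and (b) an explicit identification of \emph{where} the normalization $w=\deg$ enters, namely in the degree-additivity $\deg_{K}F=\deg F_{1}+\deg F_{2}$ under gluing, which is exactly the weight-compatibility hypothesis of the simplicial-map interlacing theorem (and which matches the weight bookkeeping the paper carries out in its Theorem \ref{join simple}). Conversely, the paper's route gives the general statement $\mu_{i}\leq\lambda_{i}\leq\mu_{i+k}$ for quotients collapsing several faces at once, whereas your argument as written handles only the codimension-one case needed here. One small imprecision: your reduction to the $(i+1)$-skeleton with $W_{i+1}=I$ is not literally compatible with $\lup$ of a higher-dimensional complex (non-facet $(i+1)$-faces carry weight $\deg\bar F$, not $1$), but this is harmless, since your argument never uses $W_{i+1}=I$ — it only needs the $(i+1)$-face weights to agree between $K_{1}\cup K_{2}$ and $K$ under the canonical bijection, which holds because the gluing changes nothing in dimensions $\geq i+1$; the isometry identity involves only $W_{i}$.
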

\begin{proof}
Let $F_{1}$ and $ F_{2}$ be  $i$-faces which are identified in  an $i$-wedge sum $K$, and let  
 $f: K_{1}\cup K_{2}\rightarrow K _{1}\vee_{F_{1}\sim F_{2}}K_{2} $ be a map, which
  identifies the vertices of
 $F_{1}$ with the vertices of  $ F_{2}$, and   is the identity on the remaining vertices of $K_{1}\cup K_{2}$. Furthermore, $f$  is a simplicial map.
 The interlacing theorem  for simplicial maps (see \cite{Horak}) gives
$$
\mu_{i}\leq \lambda_{i}\leq \mu_{i+k},
$$ 
 where $k=\mid S_{i}(K_{1}\cup K_{2})\mid -\mid S_{i}(K)\mid$. 
\end{proof}
Thus the spectrum of $\lup$ of the union  of two simplicial complexes majorizes the spectrum of their $i$-wedge sum. 
\begin{rem}
 The wedge sums of graphs and its effect on the spectrum of the normalized graph
 Laplacian have already been analysed in \cite{Jost}, and   the spectrum of the combinatorial graph Laplacian was analysed in \cite{GroneCoalescence}. These are  special cases of the general theory presented here.
\end{rem}

\subsection{Joins}
Let  $K_{1}$ and $K_{2}$  be simplicial complexes on the vertex sets $[n]$ and   $[m]$, respectively.
The \emph{join} $K_{1}*K_{2}$ is a simplicial complex on the vertex set $[m+n]$,  whose faces are 
$F_{1}*F_{2}:=\{v_{0},\ldots,v_{k},n+u_{0},\ldots,n+u_{l}\}$ , where $F_{1}=\{v_{0},\ldots,v_{k}\}$ is a simplex in  $K_{1}$ and $F_{2}=\{u_{0},\ldots,u_{l}\}$ a simplex in $K_{2}$.
The cochain groups of $K_{1}*K_{2}$ are
$$
C^{i}(K_{1}*K_{2},\rb)=\bigoplus_{i_{1}+i_{2}+1=i}C^{i_{1}}(K_{1},\rb)\otimes C^{i_{2}}(K_{2},\rb),
$$
and the coboundary map $\delta_{i}$ is defined as the graded derivation
$$
\delta_{i}(f\otimes g)=\delta f\otimes g + (-1)^{\lvert f \lvert}f\otimes \delta  g,
$$
where $f\otimes g \in C^{i}(K_{1}*K_{2},\rb)$ and $\lvert f \lvert $ denotes the order of a cochain group which contains $f$.

A  natural scalar product on a tensor product of Hilbert spaces is
\begin{equation}
\label{tensorscalar}
(f_{1}\otimes g_{1},f_{2}\otimes g_{2})=(f_{1}, f_{2})_{C^{i_{1}}(K_{1})}(g_{1}, g_{2})_{C^{i_{2}}(K_{2})},
\end{equation}
where $f_{1},f_{2}\in C^{i_{1}}(K_{1})$, $g_{1},g_{2}\in C^{i_{2}}(K_{2})$.
A more general product is
\begin{equation}
\label{tensorscalar general}
(f_{1}\otimes g_{1},f_{2}\otimes g_{2})=p(i_{1})(f_{1}, f_{2})_{C^{i_{1}}(K_{1})}q(i_{2})(g_{1}, g_{2})_{C^{i_{2}}(K_{2})},
\end{equation}
where $p: \{0,1,\ldots,\dim K_{1}\}\rightarrow \rb^{+}$ and $q: \{0,1,\ldots,\dim K_{2}\}\rightarrow \rb^{+}$ are positive, real valued functions.
In terms of the weight functions,  this is
\begin{equation}
\label{tensor product weights}
w_{K_{1}*K_{2}}(F_{1}\otimes F_{2})=p(\dim F_{1}) w_{K_{1}}(F_{1}) q(\dim F_{2})w_{K_{2}}(F_{2}).
\end{equation}
An elementary calculation yields
\begin{equation}
\label{adjoint general}
\delta^{*}_{i}(f\otimes g)=\frac{p(\lvert f \lvert)}{p(\lvert f \lvert -1)}\delta^{*}f\otimes g + \frac{q(\lvert g \lvert)}{q(\lvert g \lvert -1)}(-1)^{\lvert f \lvert } f\otimes \delta^{*}g.
\end{equation}

Then the following result holds. 
\begin{thm}
\label{Duval general}

\begin{equation}
\s((\delta_{i}^{*}\delta_{i}+ \delta_{i-1}\delta_{i-1}^{*}) (K_{1}*K_{2}))\dote 
\bigcup_{\substack{\lambda_{i}\in \s((\delta_{i_{1}}^{*}\delta_{i_{1}}+ \delta_{i_{1}-1}\delta_{i_{1}-1}^{*}) (K_{1}))\\
\mu_{j}\in\s((\delta_{i_{2}}^{*}\delta_{i_{2}}+ \delta_{i_{2}-1}\delta_{i_{2}-1}^{*}) (K_{2})) }} P_{\lambda_{i}}\lambda_{i}+Q_{\mu_{j}}\mu_{j},
\end{equation}
where $i_{1}+i_{2}+1=i$, and 
$$P_{\lambda_{i}}=\left\{  \begin{array}{ll}
p(i_{1}+1)/p(i_{1}) & \textrm{ if } \lambda_{i} \in \s(\mathcal{L}_{i_{1}}^{up}(K_{1})),\\
&{}   \\
p(i_{1})/p(i_{1}-1) & \textrm{ if } \lambda_{i} \in \s(\mathcal{L}_{i_{1}}^{down}(K_{1})),
\end{array}
\right. $$ and 
$$Q_{\mu_{j}}=\left\{  \begin{array}{ll}
q(i_{2}+1)/q(i_{2}) & \textrm{ if  } \mu_{j} \in \s(\mathcal{L}_{i_{2}}^{up}(K_{2})),\\
&{}   \\
q(i_{2})/q(i_{2}-1) & \textrm{ if } \mu_{j} \in \s(\mathcal{L}_{i_{2}}^{down}(K_{2})).
\end{array}
\right. $$
\end{thm}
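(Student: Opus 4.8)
The plan is to compute the join Laplacian $\mathcal{L}_i(K_1*K_2) = \delta_i^*\delta_i + \delta_{i-1}\delta_{i-1}^*$ explicitly on the tensor-product decomposition of the cochain groups, show that it splits as a sum of rescaled factor Laplacians, and then read off its spectrum from the Hodge decomposition together with the spectral behaviour of an operator of the form $A\otimes I + I \otimes B$. Concretely, I would fix a summand $C^{i_1}(K_1,\rb)\otimes C^{i_2}(K_2,\rb)$ with $i_1+i_2+1=i$ and evaluate $\mathcal{L}_i$ on an elementary tensor $f\otimes g$ with $|f|=i_1$, $|g|=i_2$. Applying the graded-derivation formula for $\delta_i$ and then the adjoint formula (\ref{adjoint general}) to compute $\delta_i^*\delta_i(f\otimes g)$, and symmetrically applying (\ref{adjoint general}) followed by the derivation formula to compute $\delta_{i-1}\delta_{i-1}^*(f\otimes g)$, one obtains four \emph{diagonal} terms proportional to $\delta^*\delta f\otimes g$, $\delta\delta^* f\otimes g$, $f\otimes\delta^*\delta g$, $f\otimes\delta\delta^* g$, together with four \emph{mixed} terms proportional to $\delta f\otimes\delta^* g$ and $\delta^* f\otimes\delta g$.

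The crucial step is that the mixed terms cancel in pairs. Here the derivation sign $(-1)^{|f|}$ and the weight ratios conspire: the coefficient of $\delta f\otimes\delta^* g$ produced by $\delta_i^*\delta_i$ is exactly the negative of the one produced by $\delta_{i-1}\delta_{i-1}^*$, and likewise for $\delta^* f\otimes\delta g$. Consequently
\begin{equation*}
\mathcal{L}_i(f\otimes g) = (A_1 f)\otimes g + f\otimes(A_2 g),
\end{equation*}
where
\begin{equation*}
A_1 = \frac{p(i_1+1)}{p(i_1)}\,\mathcal{L}^{up}_{i_1}(K_1) + \frac{p(i_1)}{p(i_1-1)}\,\mathcal{L}^{down}_{i_1}(K_1),
\end{equation*}
and $A_2$ is the analogous operator on $K_2$ built from $q$; that is, $\mathcal{L}_i = A_1\otimes I + I\otimes A_2$ on this summand. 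The extreme gradings (e.g. $i_1=0$, where a factor drops to $C^{-1}$) are covered by the augmented complex convention.

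Next I would diagonalize $A_1$ using the Hodge decomposition. By (\ref{hodge}) and (\ref{hodge1}) the images of $\mathcal{L}^{down}_{i_1}$ and $\mathcal{L}^{up}_{i_1}$ lie in each other's kernels, so $C^{i_1}(K_1)$ admits an orthonormal eigenbasis in which every vector is harmonic, or an eigenvector of $\mathcal{L}^{up}_{i_1}$ on which $\mathcal{L}^{down}_{i_1}$ vanishes (an element of $\im\delta^*$), or an eigenvector of $\mathcal{L}^{down}_{i_1}$ on which $\mathcal{L}^{up}_{i_1}$ vanishes (an element of $\im\delta$). On an $\mathcal{L}^{up}_{i_1}$-eigenvector for $\lambda$ the operator $A_1$ acts by $\frac{p(i_1+1)}{p(i_1)}\lambda$, and on an $\mathcal{L}^{down}_{i_1}$-eigenvector for $\lambda$ by $\frac{p(i_1)}{p(i_1-1)}\lambda$; these are precisely the two cases defining $P_{\lambda_i}$. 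By (\ref{com1}) the spectrum of $A_1$ is therefore the multiset $\{P_{\lambda_i}\lambda_i\}$ as $\lambda_i$ runs over $\s(\mathcal{L}_{i_1}(K_1))$, and similarly the spectrum of $A_2$ is $\{Q_{\mu_j}\mu_j\}$.

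Finally, for the operator $A_1\otimes I + I\otimes A_2$ the tensor products of the two eigenbases form an eigenbasis with eigenvalues $P_{\lambda_i}\lambda_i + Q_{\mu_j}\mu_j$; taking the direct sum over all decompositions $i_1+i_2+1=i$ gives the asserted union. The equality is stated only up to multiplicities of zero ($\dote$) because the harmonic contributions accumulate across the several summands in a way that is delicate to track but is in any case fixed independently by Theorem \ref{thm zeros}; the nonzero part of the spectrum is reproduced exactly. I expect the sign-and-weight cancellation of the mixed terms to be the main obstacle, since it is the single point where the graded-derivation structure on the join and the exact form (\ref{adjoint general}) of the adjoint must be used in full, and it is what forces $p(i_1+1)/p(i_1)$ and $p(i_1)/p(i_1-1)$ to attach to the up- and down-parts respectively.
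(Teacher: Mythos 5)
Your proposal is correct and follows essentially the same route as the paper: computing $\delta_i^*\delta_i(f\otimes g)$ and $\delta_{i-1}\delta_{i-1}^*(f\otimes g)$ via the graded derivation and the adjoint formula (\ref{adjoint general}), observing that the mixed terms $\delta f\otimes\delta^*g$ and $\delta^*f\otimes\delta g$ cancel in pairs upon addition, and concluding that the join Laplacian acts as $A_1\otimes I+I\otimes A_2$ with the rescaled up/down factors you wrote. Your final step — diagonalizing $A_1$ and $A_2$ via the Hodge decomposition and using (\ref{com1}) to attach the correct coefficient $P_{\lambda}$ or $Q_{\mu}$ to up- versus down-eigenvalues — is exactly the content the paper compresses into ``from the last equation we immediately deduce,'' so you have simply made its implicit spectral argument explicit.
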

\begin{proof}
\begin{align}
\label{Lup general}
\delta_{i}^{*}\delta_{i}(f\otimes g)= {}&
\frac{p(\lvert f \lvert +1)}{p(\lvert f \lvert)}\delta^{*}\delta f\otimes g + (-1)^{\lvert f \lvert +1} \frac{q(\lvert g \lvert)}{q(\lvert g \lvert -1)}\delta f \otimes \delta^{*}g\\
 &+ (-1)^{\lvert f \lvert} \frac{p(\lvert f \lvert)}{p(\lvert f \lvert -1)}\delta^{*}f\otimes \delta g +  \frac{q(\lvert g \lvert +1)}{q(\lvert g \lvert)}f\otimes \delta^{*}\delta g\notag
\end{align}

\begin{align}
\label{Ldown general}
\delta_{i}\delta_{i}^{*}(f\otimes g)= {}&
\frac{p(\lvert f \lvert)}{p(\lvert f \lvert -1)}\delta\delta^{*} f\otimes g + (-1)^{\lvert f \lvert } \frac{q(\lvert g \lvert)}{q(\lvert g \lvert -1)}\delta f \otimes \delta^{*}g\\
 &+ (-1)^{\lvert f \lvert-1} \frac{p(\lvert f \lvert)}{p(\lvert f \lvert -1)}\delta^{*}f\otimes \delta g +  \frac{q(\lvert g \lvert)}{q(\lvert g \lvert -1 )}f\otimes \delta\delta^{*} g\notag
\end{align}
Addition of  (\ref{Lup general}) and (\ref{Ldown general}) gives
\begin{align}
(\delta_{i}^{*}\delta_{i}+\delta_{i}\delta_{i}^{*}) (f\otimes g)= {}&
\left(\frac{p(\lvert f \lvert +1)}{p(\lvert f \lvert)}\mathcal{L}_{\lvert f \lvert }^{up}(K_{1})+ \frac{p(\lvert f \lvert)}{p(\lvert f \lvert -1)}\mathcal{L}_{\lvert f \lvert }^{down}(K_{1})\right) f\otimes g \\
&+ f\otimes \left(\frac{q(\lvert g \lvert +1)}{q(\lvert g \lvert)}\mathcal{L}_{\lvert g \lvert }^{up}(K_{2})+ \frac{q(\lvert g \lvert)}{q(\lvert g \lvert -1)}\mathcal{L}_{\lvert g \lvert }^{down}(K_{2})\right) g\notag .
\end{align}
From the last equation we immediately deduce 
\begin{equation}
\s((\delta_{i}^{*}\delta_{i}+ \delta_{i-1}\delta_{i-1}^{*}) (K_{1}*K_{2}))\dote 
\bigcup_{\substack{\lambda_{i}\in \s(\mathcal{L}_{i_{1}} (K_{1}))\\
\mu_{j}\in\s(\mathcal{L}_{i_{2}}(K_{2})) }} P_{\lambda_{i}} \lambda_{i}+Q_{\mu_{j}}\mu_{j}.
\end{equation}
\end{proof}
\begin{rem}
Proposition 4.9. in \cite{DuvalShifted}  treats the special case of
Theorem \ref{Duval general} where the 
functions $p$ and $q$ are identically equal to $1$. In that case, the eigenvalues of these complexes satisfy
\begin{equation}
\label{eigsums}
\s((\delta_{i}^{*}\delta_{i}+ \delta_{i-1}\delta_{i-1}^{*}) (K_{1}*K_{2}))\dote 
\bigcup_{\substack{\lambda_{i}\in \s((\delta_{i_{1}}^{*}\delta_{i_{1}}+ \delta_{i_{1}-1}\delta_{i_{1}-1}^{*}) (K_{1}))\\
\mu_{j}\in\s((\delta_{i_{2}}^{*}\delta_{i_{2}}+ \delta_{i_{2}-1}\delta_{i_{2}-1}^{*}) (K_{2})) }} \lambda_{i}+\mu_{j}.
\end{equation}
In \cite{DuvalShifted}, it is  assumed that the weight functions on
the cochain spaces of $K_{1}$ and $K_{2}$ are equal to the identity,
which yields the  combinatorial Laplacian. 
\end{rem}

The next theorem provides necessary conditions on $p$ and $q$ for the
Laplace operator defined on $K_{1}*K_{2}$ to be  normalized.

\begin{thm}
\label{theorem joins}
Let  $w_{K_{1}}$ and  $w_{K_{2}}$ be the weight functions on $K_{1}$ and $K_{2}$, resp., such that      $\mathcal{L}(K_{1},w_{K_{1}})$ and 
 $\mathcal{L}(K_{2},w_{K_{2}})$ are the normalized Laplace operators.
Without  loss of generality, assume $\dim K_{1}\leq \dim K_{2}$. If
$ p(i+1)/p(i)+q(j+1)/q(j)=1$ for every $i <\dim K_{1}$ and 
 $ j< \dim K_{2}$,
then $\s(\mathcal{L}_{i}^{up}(K_{1}*K_{2},  pw_{K_{1}}qw_{K_{2}}))\subset [0, i+2]$, or in other words
$\mathcal{L}(K_{1}*K_{2}, pw_{K_{1}}qw_{K_{2}})$ is the normalized Laplacian.
\end{thm}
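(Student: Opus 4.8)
The plan is to read off the spectrum of the full Laplacian $\mathcal{L}_i(K_1*K_2)$ from Theorem \ref{Duval general} and then pass to the up-Laplacian via the splitting (\ref{com1}). The first observation is that the hypothesis $p(i+1)/p(i)+q(j+1)/q(j)=1$ is far more rigid than it looks: fixing any admissible $j$ and letting $i$ vary shows that $p(i+1)/p(i)$ cannot depend on $i$, and symmetrically $q(j+1)/q(j)$ cannot depend on $j$. Hence there are constants $a,b\in(0,1)$ with
$$
\frac{p(i+1)}{p(i)}=a,\qquad \frac{q(j+1)}{q(j)}=b,\qquad a+b=1 .
$$
In particular $p$ and $q$ are strictly decreasing geometric sequences, and this is the structural fact I will exploit.

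Next I would substitute this into Theorem \ref{Duval general}. The factors $P_{\lambda_i}$ and $Q_{\mu_j}$ occurring there are precisely ratios of consecutive values of $p$ and $q$: one has $P_{\lambda_i}\in\{p(i_1+1)/p(i_1),\,p(i_1)/p(i_1-1)\}$ and $Q_{\mu_j}\in\{q(i_2+1)/q(i_2),\,q(i_2)/q(i_2-1)\}$, the choice depending only on whether the contributing eigenvalue is of up- or down-type. Because all these ratios are constant, \emph{every} such factor collapses to $P_{\lambda_i}=a$ and $Q_{\mu_j}=b$, regardless of the up/down distinction. Theorem \ref{Duval general} then asserts that each eigenvalue of $\mathcal{L}_i(K_1*K_2)$ is either $0$ or of the form $a\lambda+b\mu$ with $\lambda\in\s(\mathcal{L}_{i_1}(K_1))$, $\mu\in\s(\mathcal{L}_{i_2}(K_2))$ and $i_1+i_2+1=i$.

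Now I would feed in the normalization of the two factors. Since $\mathcal{L}(K_1,w_{K_1})$ is normalized, its up-eigenvalues at level $i_1$ lie in $[0,i_1+2]$, and by (\ref{com2}) its down-eigenvalues at level $i_1$ are up-eigenvalues at level $i_1-1$, hence lie in $[0,i_1+1]$; thus every $\lambda\in\s(\mathcal{L}_{i_1}(K_1))$ satisfies $\lambda\le i_1+2$, and likewise $\mu\le i_2+2$. A short computation closes the estimate:
$$
a\lambda+b\mu\le a(i_1+2)+b(i_2+2)=a\,i_1+b\,i_2+2 ,
$$
and, using $a+b=1$ and $i_1+i_2=i-1$, the desired inequality $a\lambda+b\mu\le i+2$ is equivalent to $b\,i_1+a\,i_2\ge -1$, which holds since $i_1,i_2\ge -1$ and $a+b=1$. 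Hence every eigenvalue of $\mathcal{L}_i(K_1*K_2)$ is at most $i+2$. Finally, by (\ref{com1}) every nonzero eigenvalue of $\mathcal{L}_i^{up}$ is an eigenvalue of $\mathcal{L}_i$ and is therefore $\le i+2$, while the remaining eigenvalues are $0$; so $\s(\mathcal{L}_i^{up}(K_1*K_2,pw_{K_1}qw_{K_2}))\subset[0,i+2]$, i.e. the operator is normalized.

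The step I expect to be most delicate is not the final arithmetic but the bookkeeping at the extreme indices: the summands with $i_1=-1$ or $i_2=-1$ (the ``pure $K_1$'' and ``pure $K_2$'' cochains) and the down-contributions at level $0$, where $P_{\lambda_i}$ and $Q_{\mu_j}$ formally involve $p(-1)$ or $q(-1)$. These must be pinned down by the natural convention extending the constant ratio to index $-1$ (equivalently, by the normalization $w(\emptyset)=\deg\emptyset$ on the empty face), and one should verify that with this convention the collapse $P_{\lambda_i}=a$, $Q_{\mu_j}=b$ and the bounds $\lambda\le i_1+2$, $\mu\le i_2+2$ persist at the boundary. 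An alternative that avoids Theorem \ref{Duval general} altogether is to check directly that $\deg_{K_1*K_2}(F_1*F_2)\le w_{K_1*K_2}(F_1*F_2)$ for every non-facet $i$-face, by splitting the cofaces of $F_1*F_2$ into those extending $F_1$ and those extending $F_2$; the relation $a+b=1$ turns this into an equality when $F_1$ and $F_2$ are both non-facets and a strict inequality otherwise, after which inequality (\ref{ineq2}) yields $\lambda_{max}\le i+2$ at once.
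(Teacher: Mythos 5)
Your proposal is correct, but your main argument is genuinely different from the paper's. The paper never invokes Theorem \ref{Duval general}: it verifies the normalizing condition (\ref{normalizing condition}) directly, computing
\begin{equation*}
\deg (F_{1}\otimes F_{2}) \;=\; \bigl(p(\dim F_{1}+1)\,q(\dim F_{2})+p(\dim F_{1})\,q(\dim F_{2}+1)\bigr)\, w_{K_{1}}(F_{1})\,w_{K_{2}}(F_{2})
\end{equation*}
by splitting the cofaces of $F_{1}\otimes F_{2}$ into those extending $F_{1}$ and those extending $F_{2}$ and substituting $\deg F_{k}=w_{K_{k}}(F_{k})$; dividing by $p(\dim F_{1})q(\dim F_{2})$ turns the condition $\deg=w$ into the stated hypothesis, and the spectral bound then follows from (\ref{ineq2}). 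In other words, the paper's proof is essentially your ``alternative'' from the final paragraph, not your main route. Your main route---the rigidity observation forcing constant ratios $a+b=1$, the collapse $P_{\lambda}=a$, $Q_{\mu}=b$ in Theorem \ref{Duval general}, the factor bounds $\lambda\le i_{1}+2$, $\mu\le i_{2}+2$ obtained from normalization together with (\ref{com1})--(\ref{com2}), and the arithmetic $a\lambda+b\mu\le ai_{1}+bi_{2}+2\le i+2$ using $i_{1},i_{2}\ge -1$---is sound, and it buys something the paper's proof does not: an explicit description of the join spectrum as the sums $a\lambda+b\mu$, plus the structural fact (nowhere remarked in the paper) that the hypothesis is only satisfiable by geometric sequences $p,q$. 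The paper's route buys economy: no spectral decomposition and no bookkeeping at the boundary indices $i_{1}=-1$ or $i_{2}=-1$ beyond the face-degree computation. One further point in your favour: your alternative treats the facet cases more carefully than the paper does, since the substitution $\deg F_{k}=w_{K_{k}}(F_{k})$ is valid only when neither factor is a facet; when exactly one of $F_{1},F_{2}$ is a facet one gets the strict inequality $\deg(F_{1}\otimes F_{2})<w_{K_{1}*K_{2}}(F_{1}\otimes F_{2})$, so the normalizing condition literally fails on such faces, but, as you observe, the inequality points in the direction that (\ref{ineq2}) needs, and the stated conclusion $\s(\mathcal{L}_{i}^{up}(K_{1}*K_{2},pw_{K_{1}}qw_{K_{2}}))\subset[0,i+2]$ survives.
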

\begin{proof}
We check for which values of $p$ and $q$ the weight function of a join $K_{1}*K_{2}$  satisfies 
 the normalizing condition (\ref{normalizing condition}).
For arbitrary $F_{1}\in K_{1}$ and $F_{2}\in K_{2}$, we have
\small
\begin{align*}
\deg F_{1}\otimes F_{2}=&\sum_{\substack{F\in S_{i+1}(K_{1}*K_{2}):\\  F_{1}\otimes F_{2} \in \partial F}}w(F)\\
=&\sum_{\bar{F_{1}} : F_{1}\in \partial \bar{F_{1}}} w_{K_{1}*K_{2}}(\bar{F_{1}}\otimes F_{2})+
\sum_{\bar{F_{2}}: F_{2}\in \partial \bar{F_{2}}} w_{K_{1}*K_{2}}(F_{1}\otimes \bar{F_{2}})\\
=& \sum_{\bar{F_{1}} : F_{1}\in \partial \bar{F_{1}}} p(\dim \bar{F_{1}})w_{K_{1}}(\bar{F_{1}}) q(\dim F_{2})w_{K_{2}}(F_{2})\\
&+
\sum_{\bar{F_{2}}: F_{2}\in \partial \bar{F_{2}}} p(\dim F_{1}) w_{K_{1}}(F_{1}) q(\dim \bar{F_{2}})w_{K_{2}}(\bar{F_{2}})\\
=&p(\dim F_{1}+1)q(\dim F_{2})\deg F_{1} w_{K_{2}}(F_{2})+ p(\dim F_{1})q(\dim F_{2}+1)\deg F_{2} w_{K_{1}}(F_{1})\\
=& (p(\dim F_{1}+1)q(\dim F_{2})+p(\dim F_{1})q(\dim F_{2}+1) )w_{K_{1}}(F_{1})w_{K_{2}}(F_{2})
\end{align*}
\normalsize
Thus, the weight function $w_{K_{1}*K_{2}}$ satisfies (\ref{normalizing condition}) iff
$$(p(i+1)q(j)+p(i)q(j+1) )=p(i)q(j),$$ for every $i,j$. 
\end{proof}
The following corollary is a direct consequence of Theorem \ref{theorem joins} and Theorem \ref{Duval general}.

\begin{coll}
\label{Jointhm}
Let  $\dim K_{1}=d_{1}$ and $\dim K_{2}=d_{2}$ and let 
  $\mathcal{L}(K_{1},w_{K_{1}})$ and 
 $\mathcal{L}(K_{2},w_{K_{2}})$ be normalized Laplace operators. Assume $w_{K_{1}*K_{2}}:=w_{K_{1}}w_{K_{2}}$, and denote 
$\mathcal{L}(K_{1}*K_{2},w_{K_{1}*K_{2}})$ by $\Delta(K_{1}*K_{2})$, 
 then
\begin{equation}
\label{join}
\s(\Delta^{down}_{d_{1}+d_{2}+1} (K_{1}*K_{2}))\dote \bigcup_{\substack{\lambda_{i}\in \s(\Delta^{down}_{d_{1}} (K_{1}))\\
\mu_{j}\in\s(\Delta^{down}_{d_{2}} (K_{2})) }} \lambda_{i}+\mu_{j},
\end{equation}
or equivalently
$$
\s(\Delta^{up}_{d_{1}+d_{2}} (K_{1}*K_{2}))\dote \bigcup_{\substack{\lambda_{i}\in \s(\Delta^{up}_{d_{1}-1} (K_{1}))\\
\mu_{j}\in\s(\Delta^{up}_{d_{2}-1} (K_{2})) }} \lambda_{i}+\mu_{j}.
$$
\end{coll}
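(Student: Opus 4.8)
The plan is to reduce the statement to the top dimension of the join and then read it off from Theorem \ref{Duval general} with the trivial scaling functions $p\equiv q\equiv 1$. First I would record the elementary but decisive observation that $\dim(K_1*K_2)=d_1+d_2+1$, so every $(d_1+d_2+1)$-face of $K_1*K_2$ is a facet and carries no coface; hence $\mathcal{L}^{up}_{d_1+d_2+1}(K_1*K_2)=\delta_{d_1+d_2+1}^{*}\delta_{d_1+d_2+1}=0$, which forces $\Delta_{d_1+d_2+1}(K_1*K_2)=\Delta^{down}_{d_1+d_2+1}(K_1*K_2)$. For the same reason $\mathcal{L}_{d_j}(K_j)=\mathcal{L}^{down}_{d_j}(K_j)$ for $j=1,2$, since $d_j=\dim K_j$. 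This lets me replace each full combinatorial Laplacian occurring in Theorem \ref{Duval general} by its down-part in the relevant top degrees.

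Next I would examine the decomposition $C^{i}(K_1*K_2,\rb)=\bigoplus_{i_1+i_2+1=i}C^{i_1}(K_1,\rb)\otimes C^{i_2}(K_2,\rb)$ at $i=d_1+d_2+1$. The constraint $i_1+i_2=d_1+d_2$ together with $i_1\le d_1$, $i_2\le d_2$ leaves the single summand $i_1=d_1$, $i_2=d_2$, so only the pair of top-dimensional factors contributes. Taking $w_{K_1*K_2}=w_{K_1}w_{K_2}$ is exactly the choice $p\equiv q\equiv 1$ in (\ref{tensor product weights}), whence every prefactor $P_{\lambda_i}$ and $Q_{\mu_j}$ in Theorem \ref{Duval general} equals $1$. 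Applying that theorem at $i=d_1+d_2+1$ and discarding all but the surviving summand gives $\s(\mathcal{L}_{d_1+d_2+1}(K_1*K_2))\dote\bigcup_{\lambda_i,\mu_j}\lambda_i+\mu_j$ with $\lambda_i\in\s(\mathcal{L}^{down}_{d_1}(K_1))$ and $\mu_j\in\s(\mathcal{L}^{down}_{d_2}(K_2))$; combined with the previous paragraph this is precisely (\ref{join}). The equivalent up-formulation then follows formally from (\ref{com2}): applying $\s(\mathcal{L}^{up}_{k})\dote\s(\mathcal{L}^{down}_{k+1})$ to the join at $k=d_1+d_2$ and to each factor at $k=d_j-1$ transports (\ref{join}) to the asserted identity for $\Delta^{up}_{d_1+d_2}$.

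The one point that needs genuine care, where Theorem \ref{theorem joins} enters, is justifying the symbol $\Delta$: I would verify that the product weight already meets the normalizing condition (\ref{normalizing condition}) in top degree. A $(d_1+d_2)$-face of $K_1*K_2$ has the form $F_1*F_2$ with either $\dim F_1=d_1,\ \dim F_2=d_2-1$ or $\dim F_1=d_1-1,\ \dim F_2=d_2$, so exactly one of $F_1,F_2$ is a facet of its complex. In the degree computation of Theorem \ref{theorem joins} the summand attached to that facet vanishes (a facet has empty coface set), and using that $K_1$ and $K_2$ are already normalized—facet weight $1$, codimension-one weight equal to the degree—one checks $w_{K_1}(F_1)w_{K_2}(F_2)=\deg(F_1*F_2)$. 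Thus $p\equiv q\equiv 1$ suffices at the top even though the genuine condition $p(i+1)/p(i)+q(j+1)/q(j)=1$ is needed only for $i<\dim K_1$, $j<\dim K_2$. I expect this bookkeeping—the collapse of one degree-term because one cofactor is a facet—to be the only delicate step; the rest is a direct substitution into Theorem \ref{Duval general} together with the reduction of the tensor decomposition to a single summand.
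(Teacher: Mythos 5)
Your proposal is correct and follows essentially the same route as the paper: verify that the product weight $w_{K_{1}}w_{K_{2}}$ satisfies the normalizing condition (\ref{normalizing condition}) in the top degree because exactly one cofactor of each $(d_{1}+d_{2})$-face is a facet (so one summand in the degree formula vanishes and the other collapses to $w_{K_{1}}(F_{1})w_{K_{2}}(F_{2})$), then apply Theorem \ref{Duval general} with $p\equiv q\equiv 1$, i.e.\ (\ref{eigsums}). Your explicit bookkeeping (the single surviving tensor summand, the vanishing of $\mathcal{L}^{up}$ in top degree, and the passage between the up- and down-formulations via (\ref{com2})) only spells out steps the paper leaves implicit.
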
 

\begin{proof}
For $F_{1}\in K_{1}$ and  $F_{2}\in K_{2}$ we have
\begin{align*}
\deg F_{1}\otimes F_{2}= & \sum_{\bar{F_{1}} : F_{1}\in \partial \bar{F_{1}}} w_{K_{1}}(\bar{F_{1}}) w_{K_{2}}(F_{2})+
\sum_{\bar{F_{2}}: F_{2}\in \partial \bar{F_{2}}} w_{K_{1}}(F_{1}) w_{K_{2}}(\bar{F_{2}}).\\
\end{align*}
  If neither $F_{1}$ nor $F_{2}$ is a facet of $K_{1},K_{2}$, then  the degree of $F_{1}\otimes F_{2}$ is  $ 2w_{K_{1}}(F_{1})w_{K_{2}}(F_{2})$.
Therefore, (\ref{normalizing condition})   does not hold. Consequently, the Laplace operator determined by this function 
will not be the normalized Laplace operator of  the join $K_{1}*K_{2}$.
However,  if   $F_{1}$ or $F_{2}$ is a facet, then  $\deg F_{1}\otimes F_{2}=w_{K_{1}}(F_{1})w_{K_{2}}(F_{2})$.
Thus, $w_{K_{1}*K_{2}}$ coincides with the weight function  determining $\lup(K_{1}*K_{2})$, for $i=d_{1}+d_{2}+1$.
Together with (\ref{eigsums}), this yields (\ref{join}).
\end{proof}
As a direct consequence of  Corollary \ref{Jointhm} and  the fact that $\s(\Delta_{-1}^{up}(K))= \{1\}$ we get the following corollary.
\begin{coll}
If $K$ is simplicial complex of dimension $d$, and $v*K$ a cone over $K$, then 
$$
\s(\Delta^{up}_{d} (v*K))\dote \bigcup_{\lambda_{i}\in \s(\Delta^{up}_{d-1} (K))} 1+\lambda_{i}
$$
\end{coll}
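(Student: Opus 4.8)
The plan is to recognize the cone $v*K$ as the join $\{v\}*K$, where $\{v\}$ is the $0$-dimensional simplicial complex consisting of a single vertex together with the empty face, and then to apply Corollary~\ref{Jointhm} directly. Accordingly I would set $K_1:=\{v\}$ and $K_2:=K$, so that $d_1=\dim K_1=0$ and $d_2=\dim K_2=d$. To invoke the corollary I first check its hypotheses: $K$ carries the normalized Laplacian by assumption, and $\{v\}$ is trivially normalized, since its unique non-facet is $\emptyset$, for which the normalizing condition $w(\emptyset)=\deg\emptyset=w(v)$ holds automatically. Taking the product weight $w_{\{v\}*K}=w_{\{v\}}w_{K}$, Corollary~\ref{Jointhm} is then applicable.

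With these identifications the left-hand side of Corollary~\ref{Jointhm} reads $\s(\Delta^{up}_{d_1+d_2}(K_1*K_2))=\s(\Delta^{up}_{d}(v*K))$, which is exactly the object we wish to describe. The right-hand side is the multiset of sums $\lambda_i+\mu_j$ with $\lambda_i\in\s(\Delta^{up}_{d_1-1}(K_1))=\s(\Delta^{up}_{-1}(\{v\}))$ and $\mu_j\in\s(\Delta^{up}_{d_2-1}(K_2))=\s(\Delta^{up}_{d-1}(K))$. The decisive simplification is the fact $\s(\Delta^{up}_{-1}(\{v\}))=\{1\}$: since $C^{-1}$ is one-dimensional, $\Delta^{up}_{-1}$ is the scalar $w(v)/w(\emptyset)$, which the normalizing condition forces to equal $1$. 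Hence the only admissible value of $\lambda_i$ is $1$, and the union collapses to $\bigcup_{\mu_j\in\s(\Delta^{up}_{d-1}(K))} 1+\mu_j$, which is the claimed identity after renaming $\mu_j$.

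The only delicate point is the index bookkeeping: one must line up the dimension shift (the top up-index $d_1+d_2$ of the join versus the down-index $d_1+d_2+1$ appearing in~(\ref{join})) and the up/down conventions so that the cone's top-dimensional up-spectrum is precisely what Corollary~\ref{Jointhm} produces. Once $\s(\Delta^{up}_{-1}(\{v\}))=\{1\}$ is in hand, the argument is a direct substitution, so I expect no genuine conceptual obstacle beyond this notational care.
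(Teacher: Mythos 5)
Your proposal is correct and is essentially identical to the paper's argument: the paper also obtains this corollary as a direct consequence of Corollary \ref{Jointhm} applied to the join of a point with $K$, together with the fact that $\s(\Delta_{-1}^{up})=\{1\}$. Your additional verification that the single vertex carries the normalized Laplacian and that $\Delta^{up}_{-1}$ reduces to the scalar $1$ just makes explicit what the paper leaves implicit.
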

\begin{rem}[Direct product of graphs]
Direct products of graphs can be treated similarly as joins of simplicial complexes.
The direct product of two graphs $G_{1}$ and $G_{2}$ is the simplicial
complex $G$ of dimension $1$ with 
$C^{1}(G)=C^{1}(G_{1})\otimes C^{0}(G_{2})\oplus C^{0}(G_{1})\otimes C^{1}(G_{2})$ and $C^{0}(G)=C^{0}(G_{1})\otimes C^{0}(G_{2})$.
Then, by applying the same principle as in Theorem \ref{Duval general}, we obtain
$$
\s(\mathcal{L}^{up}_{0} (G_{1}\times G_{2}))\dote \bigcup_{\substack{\lambda_{i}\in \s(\mathcal{L}^{up}_{0} (G_{1}))\\
\mu_{j}\in\s(\mathcal{L}^{up}_{0} (G_{2})) }} \frac{p(1)}{p(0)}\lambda_{i}+\frac{q(1)}{q(0)}\mu_{j},
$$
where $p(0), p(1)$ and $q(0),q(1)$ are, as before, parameters of a scalar product.
This was proven by Fiedler \cite{Fiedler1973} for the  special case when $p=q\equiv 1$ and by Grigoryan in \cite{Grigoryan} for the case of the normalized graph Laplacian for $p,q$ with $p(1)/p(0)+q(1)/q(0)=1$.

Note that the extension of the direct product to higher dimensions
would lead to a cubical (instead of simplicial) complexes.
\end{rem}

\subsection{Duplication of motifs}

Let $K$ be a simplicial complex on the vertex set $[n]$ and $S$  a collection of simplices in $K$.
The \emph{closure} $\cl S$ of $S$  is the smallest subcomplex of $K$ that contains each simplex in $S$.
The \emph{star} $\st S$ of $S$  is the set of all simplices in $K$ that have a face in $S$. 
The \emph{link} $\lk S$ of $S$  is $\cl \st S - \st \cl S$.

If the subcomplex  $\Sigma$ of $K$ on the  vertices $v_{0},\ldots,v_{k}$ contains all of $K$'s faces on those vertices, then it  is called a  \emph{motif}:

\begin{defn}
\label{motifdef}
A subcomplex $\Sigma$ of a  simplicial complex $K$ is a $k$-motif iff:
\begin{itemize}
 \item [(i)] $(\forall F_{1},F_{2}\in \Sigma)\quad F_{1}, F_{2}\subset F\in K\Rightarrow F\in \Sigma$
\item [(ii)]  $\dim \lk \Sigma=k$.
  \end{itemize}
\end{defn}
In fact, as a consequence of  Theorem \ref{thm wedges}  for $i<k$  we obtain
$$\s(\lup(K))\dote \s(\lup(K-\st \Sigma)) \dotu \s(\cl\st\Sigma).$$
Therefore, it is meaningful to investigate 
 the effect of the duplication of a $k$-motif on the spectrum of $\lup$ only if $i=k$. 
\begin{rem}
If $K$ is an $(i+1)$-path connected simplicial complex, then any motif satisfying  $(i)$ in Definition \ref{motifdef}
 will have a link of dimension $i$. 
\end{rem}

Let $u_{0},\ldots,u_{m}$ be  vertices of  $\lk\Sigma$. By the definition of the link, these vertices are different from 
those in the motif $\Sigma$ ($u_{i}\neq v_{j}$, for every $0\leq i\leq m$ and $0 \leq j\leq k$). 
Let  $\Sigma'$  denote a simplicial complex on the vertices   $v_{0}',\ldots,v_{k}'$, which is isomorphic to $\Sigma$. And let 
  $f: v_{i}'\mapsto v_{i}$ be a simplicial isomorphism among these complexes.
 Then $K^{\Sigma}:=K\cup \{ \{v'_{i_{0}},\ldots,v'_{i_{l}},u_{j_{1}},\ldots,u_{j_{s}} \}\mid  \{v_{i_{0}},\ldots,v_{i_{l}},u_{j_{1}},\ldots,u_{j_{s}}\}\in K\}$.
\begin{prop}
$K^{\Sigma}$ is a simplicial complex and $\cl\st \Sigma$ is isomorphic to $\cl\st\Sigma'$.
\end{prop}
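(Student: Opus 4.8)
The plan is to exploit the ``unpriming'' projection that collapses each duplicated vertex back onto its original. Concretely, I would extend the simplicial isomorphism $f\colon v_i'\mapsto v_i$ to a map $\pi$ on the whole vertex set of $K^{\Sigma}$ by setting $\pi(v_i')=v_i$ and letting $\pi$ be the identity on every vertex of $K$ (in particular on the link vertices $u_0,\dots,u_m$). By the very definition of $K^{\Sigma}$, a set $G$ built from primed motif vertices and link vertices is a face of $K^{\Sigma}$ exactly when $\pi(G)$ is a face of $K$; and every face of $K^{\Sigma}$ is either a face of $K$ or such a ``primed'' set. This single observation drives both assertions.

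First I would prove that $K^{\Sigma}$ is a simplicial complex, i.e.\ that it is closed under passing to subsets. Let $G\in K^{\Sigma}$ and $H\subseteq G$. If $G\in K$ there is nothing to do, since $K$ is a simplicial complex. Otherwise $G=\{v'_{i_0},\dots,v'_{i_l},u_{j_1},\dots,u_{j_s}\}$ with $\pi(G)\in K$. Then $\pi(H)\subseteq\pi(G)\in K$, so $\pi(H)\in K$; since $H$ again consists only of primed motif vertices and link vertices, the defining condition of $K^{\Sigma}$ places $H\in K^{\Sigma}$. This also subsumes the degenerate subcases in which $H$ contains no primed vertex, so that $H=\pi(H)\in K$, and in which $H$ contains no link vertex. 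Hence $K^{\Sigma}$ is closed under inclusion.

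For the second assertion I would show that the restriction $\phi:=\pi|_{\cl\st\Sigma'}$ is the desired simplicial isomorphism onto $\cl\st\Sigma$. On vertices $\phi$ is a bijection: it sends the primed motif vertices $v_i'$ to the motif vertices $v_i$ and fixes the link vertices that occur, so it matches the vertex set of $\cl\st\Sigma'$ with that of $\cl\st\Sigma$. Since $\phi$ restricts to $f$ on $\Sigma'$, it biject the faces of $\Sigma'$ with the faces of $\Sigma$; therefore a face $G$ of $K^{\Sigma}$ contains a face of $\Sigma'$ if and only if $\pi(G)$ contains a face of $\Sigma$, so $G\in\st\Sigma'$ iff $\phi(G)\in\st\Sigma$, and passing to subfaces gives $G\in\cl\st\Sigma'$ iff $\phi(G)\in\cl\st\Sigma$. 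Surjectivity onto the faces of $\cl\st\Sigma$ follows by priming: a face $H\in\st\Sigma$ has primed preimage $H'$ with $\pi(H')=H\in K$, whence $H'\in\st\Sigma'$ and $\phi(H')=H$. Because $\phi$ and its inverse both send faces to faces, $\phi$ is a simplicial isomorphism $\cl\st\Sigma'\cong\cl\st\Sigma$.

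The step I expect to be delicate is the bookkeeping for the lower-dimensional faces of the closed stars: the closure $\cl\st\Sigma$ contains link faces that carry no motif vertex at all, and these are fixed by $\phi$. I would have to argue carefully that every such face indeed lies in $\cl\st\Sigma'$, as a subface of some primed face, so that $\phi$ is genuinely onto \emph{all} of $\cl\st\Sigma$ and not merely onto its motif-touching part. Once the vertex sets of the two closed stars and the behaviour of $\phi$ on link faces are pinned down precisely, the remaining verifications are routine consequences of the description of the faces of $K^{\Sigma}$ recorded above.
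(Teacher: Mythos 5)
Your proof is correct and complete. Note that there is essentially nothing to compare it against: the paper's own proof of this proposition consists of the single word ``Elementary,'' so your unpriming projection $\pi$ (extending $f$ by the identity on $K$) is simply the natural formalization of the routine verification the authors chose to omit, and both halves of your argument—closure under subsets via $\pi(H)\subseteq\pi(G)\in K$, and the isomorphism $\phi=\pi|_{\cl\st\Sigma'}$—go through as written. One small point of precision: your intermediate claim that a face $G$ of $K^{\Sigma}$ contains a face of $\Sigma'$ if and only if $\pi(G)$ contains a face of $\Sigma$ is false for arbitrary $G$ (any face of $K$ meeting $\Sigma$ is fixed by $\pi$ and is a counterexample to the ``if'' direction); it is true precisely for sets consisting of primed and link vertices, which is all you need, since every face of $\cl\st\Sigma'$ is of that form. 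Likewise, the ``delicate step'' you flag does resolve exactly as you sketch: a face of $\cl\st\Sigma$ containing no motif vertex is a subset of some $H_{0}\in\st\Sigma$, is fixed by priming, and hence lies in $\cl\st\Sigma'$ as a subface of the primed face $H_{0}'\in\st\Sigma'$, so $\phi$ is indeed onto all of $\cl\st\Sigma$.
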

\begin{proof}
Elementary.
\end{proof}
\begin{defn}
We say that the simplicial complex $K^{\Sigma}$ is obtained from the
simplicial complex $K$ by the \emph{duplication of the $i$-motif $\Sigma$}.
\end{defn}
\begin{figure}[h!tp]
  \begin{center}
    \subfigure{\includegraphics[width=4cm]{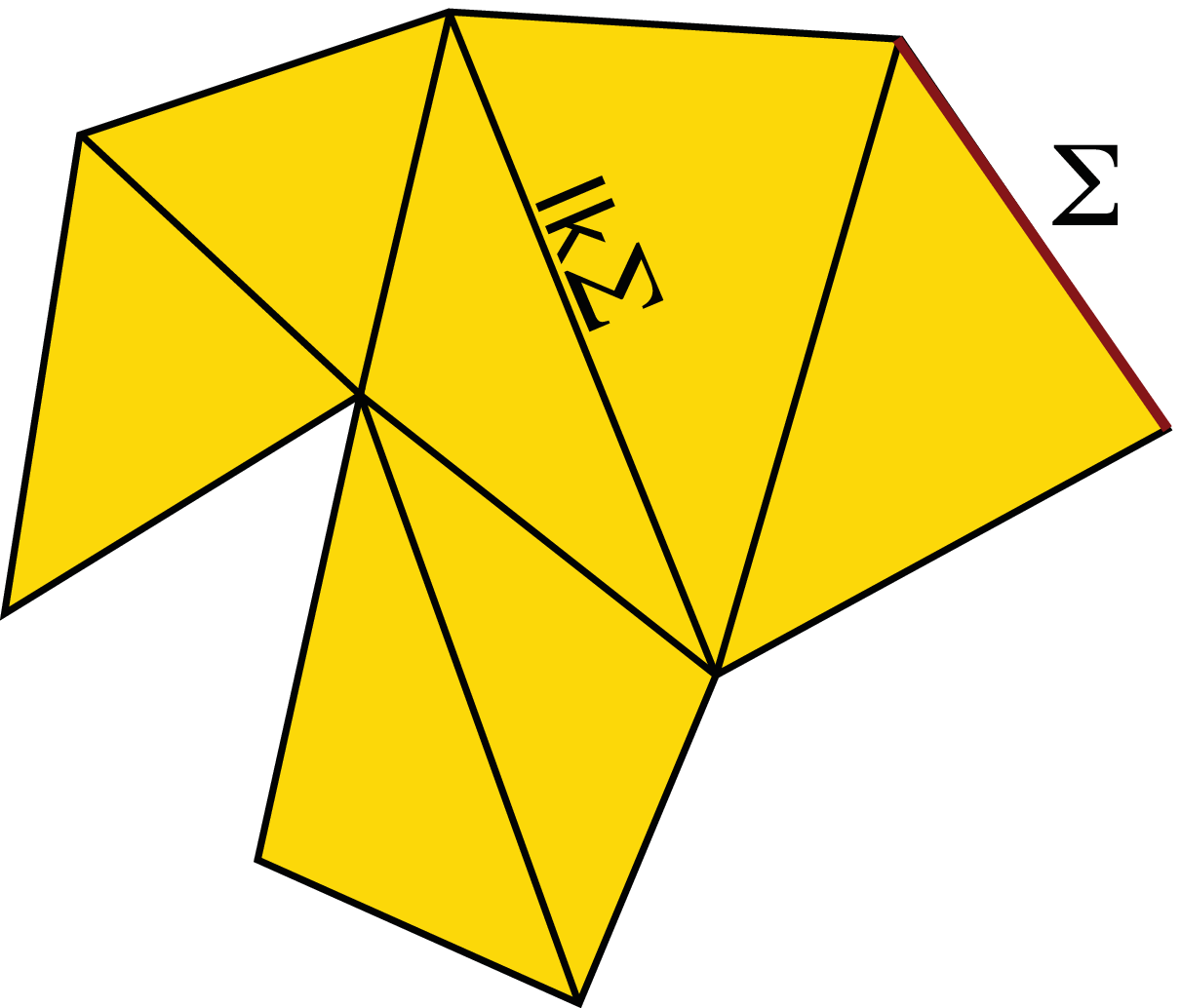}}\qquad\qquad
    \subfigure{\includegraphics[width=4cm]{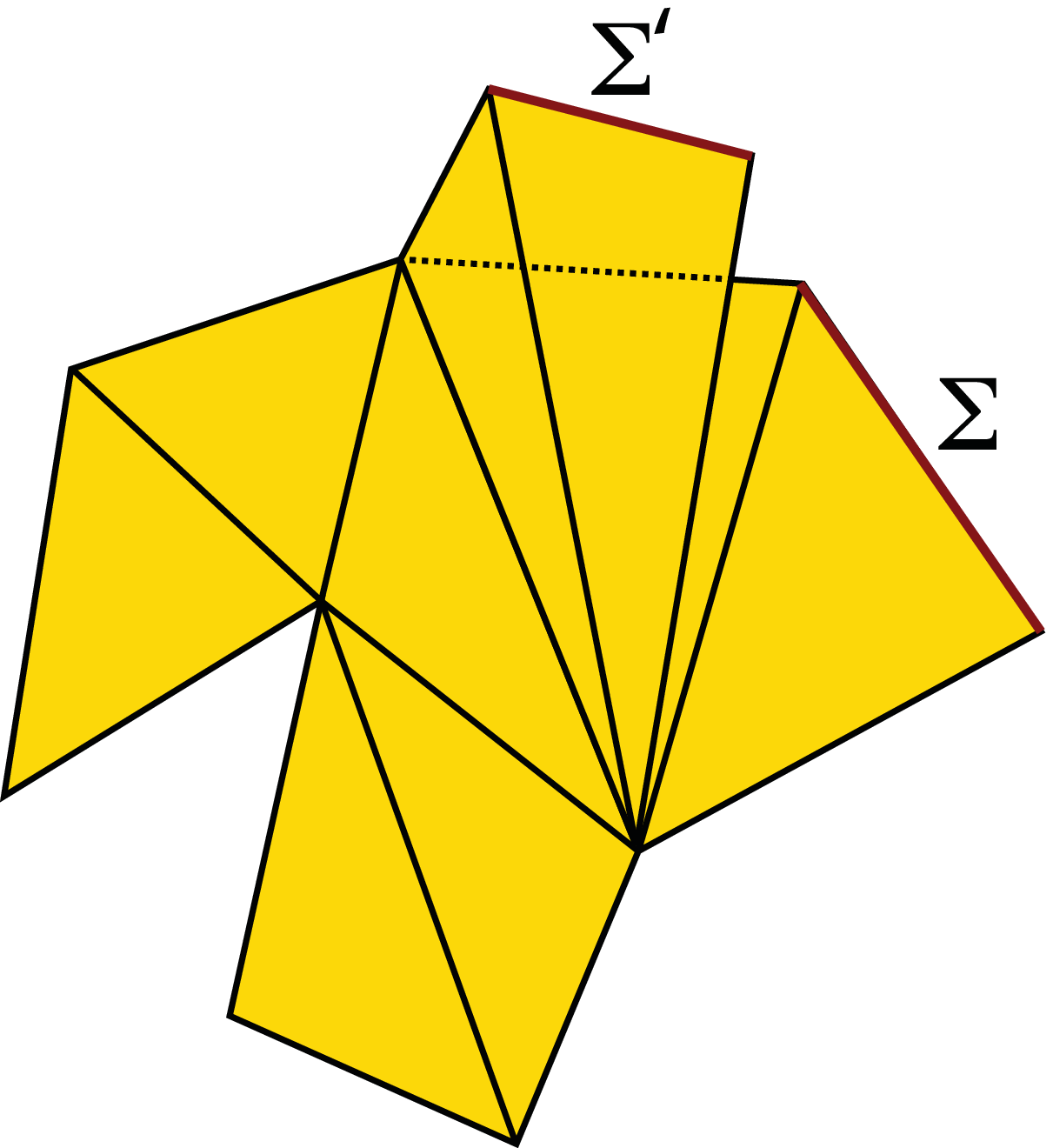}}
    \end{center}
  \caption{Duplication of  motif $\Sigma$}
\end{figure}

\begin{rem}
It could be argued  that it is $\cl\st\Sigma$ that we duplicate rather than $\Sigma$ alone.
This point of view will be very helpful in the sequel, but  we will
refer to duplication  as the duplication of the motif 
$\Sigma$, since this  is consistent with   previous work on the duplication of motifs of graphs (see \cite{Jost}).
\end{rem}
\begin{thm}
\label{main duplication} 
Let $n$ be the number of $i$-simplices in $\st \Sigma$.
Then there exist $n$ linearly independent functions $f_{1},\ldots,f_{n}$, satisfying
$$
\lup(K) f_{j}([F])=\lambda_{j}f_{j}([F]),
$$
for every $F\in S_{i}(\st\Sigma)$ and 
some  real values $\lambda_{j}$. The  doubling of the motif $\Sigma$ produces a simplicial complex $K^{\Sigma}$ with the eigenvalues $\lambda_{j}$ and  the eigenfunctions  $g_{j}$ which agree with $f_{j}$ on $\st\Sigma$ and  $-f_{j}$ on $\st\Sigma'$ and are zero elsewhere.
\end{thm}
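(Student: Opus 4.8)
The plan is to obtain the functions $f_j$ by compressing $\lup(K)$ to the star, and then to build the $g_j$ by an antisymmetric extension across the two copies of $\cl\st\Sigma$ in $K^{\Sigma}$. First I would invoke Remark \ref{pure sc dependencies} to work on the $(i+1)$-skeleton, so that $w(\bar F)=1$ on $(i+1)$-faces and $w(F)=\deg F$ on $i$-faces and the action reads
\[
\lup f([F]) = f([F]) + \frac{1}{\deg F}\sum_{\substack{\bar F\supset F}}\;\sum_{\substack{F'\subset\bar F\\ F'\neq F}}\sgn([F],\partial[\bar F])\,\sgn([F'],\partial[\bar F])\,f([F']).
\]
Let $U\subseteq C^{i}(K,\rb)$ be the coordinate subspace spanned by $\{e_{[F]}\mid F\in S_{i}(\st\Sigma)\}$, which has dimension $n$. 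Since the elementary cochains are orthogonal for the inner product under which $\lup(K)$ is self-adjoint, the orthogonal projection $P$ onto $U$ is simply the zeroing of the off-star coordinates, so the compression $A:=P\,\lup(K)\,P|_{U}$ is self-adjoint on $U$; it has real eigenvalues $\lambda_{1},\dots,\lambda_{n}$ and an eigenbasis $f_{1},\dots,f_{n}\in U$. Because each $f_{j}$ vanishes off $\st\Sigma$, for $F\in S_{i}(\st\Sigma)$ the neighbour sum above only sees star coordinates, whence $\lup(K)f_{j}([F])=(Af_{j})([F])=\lambda_{j}f_{j}([F])$. This establishes the first assertion, with $n$ linearly independent $f_{j}$.

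Next I would extend each $f_{j}$ to $C^{i}(K^{\Sigma},\rb)$ by setting $g_{j}:=f_{j}$ on $\st\Sigma$, $g_{j}:=-f_{j}$ on $\st\Sigma'$ (transported through the isomorphism $v_{l}'\mapsto v_{l}$, with the inherited orientation so that signs are preserved), and $g_{j}:=0$ elsewhere, and verify $\lup(K^{\Sigma})g_{j}=\lambda_{j}g_{j}$ face by face. The governing structural fact, read off from Definition \ref{motifdef} and the construction of $K^{\Sigma}$, is that every face of $K^{\Sigma}$ contains either no primed vertex or no unprimed motif vertex; consequently any face carrying unprimed motif vertices has \emph{exactly the same cofaces}, and hence the same degree, in $K$ and in $K^{\Sigma}$.

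I would then split into three cases. For $F\in S_{i}(\st\Sigma)$ every coface of $F$ and each of its subfaces is unprimed, so $g_{j}$ coincides with $f_{j}$ on all of them and the computation reduces verbatim to $\lup(K)f_{j}([F])=\lambda_{j}f_{j}([F])$; the primed case $F\in S_{i}(\st\Sigma')$ is identical by the $\Sigma\leftrightarrow\Sigma'$ symmetry, the global factor $-1$ being carried through. The substantive case is an $i$-face $F$ lying in neither star, where $g_{j}([F])=0$ and I must show that the neighbour sum vanishes. Such an $F$ contains no motif vertex at all, so its only neighbours with nonzero $g_{j}$ arise through cofaces $\bar F=F\cup\{v_{l}\}$ and their duplicates $\bar F'=F\cup\{v_{l}'\}$, and each neighbour $(F\setminus\{z\})\cup\{v_{l}\}\in\st\Sigma$ is paired with $(F\setminus\{z\})\cup\{v_{l}'\}\in\st\Sigma'$, on which $g_{j}$ takes opposite values.

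The crux, and the step I expect to be the main obstacle, is the sign bookkeeping in this last case: I must check that $\sgn([F],\partial[\bar F])\,\sgn([F'],\partial[\bar F])$ equals the corresponding product computed in $\bar F'$, so that the opposite values $\pm f_{j}\big((F\setminus\{z\})\cup\{v_{l}\}\big)$ cancel termwise rather than reinforce. This is precisely where the orientation on $\Sigma'$ inherited through $v_{l}'\mapsto v_{l}$ is used: the relabelling $v_{l}\mapsto v_{l}'$ leaves the position of each omitted vertex in the boundaries unchanged, hence preserves both signs. With this cancellation in hand, $\lup(K^{\Sigma})g_{j}([F])=0=\lambda_{j}g_{j}([F])$, completing the verification. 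Finally the $g_{j}$ are linearly independent because their restrictions to $\st\Sigma$ are the linearly independent $f_{j}$, so $\lambda_{1},\dots,\lambda_{n}\in\s(\lup(K^{\Sigma}))$ with eigenfunctions $g_{j}$, as claimed.
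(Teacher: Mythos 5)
Your proposal is correct and takes essentially the same route as the paper: the paper likewise obtains the $f_j$ as eigenfunctions of the normalized Laplacian restricted to the star coordinates (phrased there as $\lup(\cl \st \Sigma)\mid_{\st \Sigma}$, which coincides with your compression $A$ because cofaces of star faces remain in the star), and then verifies the antisymmetric extension $g_j$ case by case, with the same cancellation over the paired cofaces $F\cup\{v_l\}$ and $F\cup\{v_l'\}$ at link faces. Your write-up is slightly more careful on two points the paper glosses over, namely the self-adjointness of the compression (justifying real eigenvalues and an eigenbasis) and the sign bookkeeping, which the paper settles by fixing the global vertex ordering $v_0<\dots<v_k<v_0'<\dots<v_k'<u_1<\dots<u_m$ rather than by transporting orientations through the duplication isomorphism.
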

\begin{proof}
It is trivial to check that 
$\lup(\cl \st \Sigma)$ and  $\lup(K^{ \Sigma})$  coincide on  $\st \Sigma$.
Let $\lup(\cl \st \Sigma)\mid_{ \st \Sigma} $
be the restriction of the operator $\lup(\cl \st \Sigma)$ on $\st \Sigma$.
Let $\lambda_{1},\ldots, \lambda_{n}$ be the eigenvalues  of $\lup(\cl \st \Sigma)\mid_{ \st \Sigma}$ and $f_{1},\ldots f_{n}$ the corresponding eigenfunctions.
Then 
$$g_{j}([F])=\left\{  \begin{array}{ll}
f_{j}([F]) & \textrm{ for } F \textrm{ in} \st\Sigma\\
-f_{j}([F]) & \textrm{ for} F \textrm{ in} \st\Sigma'\\
0 & \textrm{ otherwise }
\end{array}
\right.$$
 is an eigenfunction of $\lup(K^{ \Sigma})$  with  eigenvalue $\lambda_{j}$.
Without  loss of generality, assume that the labelling of the  vertices of $\Sigma$ is $v_{0},\ldots,v_{k}$ and the vertices of
$\Sigma'$ is  $v'_{0},\ldots,v'_{k}$, and they are chosen such that $v_{0}<\ldots<v_{k}<v_{0}',<\ldots<v_{k}'$.
Enumerate the vertices of $\lk  \Sigma$ with $u_{1},\ldots,u_{m}$ such that \\
  $v_{0}<\ldots<v_{k}<v_{0}',<\ldots<v_{k}'<u_{1}<\ldots <u_{m}$. Then,
   $$\lup f_{j}([F])=  \lup(\cl \st \Sigma)\mid_{ \st \Sigma} f_{j}([F])=\lambda_{j}f_{j}([F]), $$
 and 
  $$\lup(- f_{j})([F'])=  \lup(\cl \st \Sigma)\mid_{ \st \Sigma} -f_{j}([F'])=-\lambda_{j}f_{j}([F']), $$ for all $F\in S_{i}(\Sigma)$
  and $F'\in S_{i}(\Sigma')$.\\
Furthermore,  assume that $[u_{1},\ldots u_{i+1}]$ is a face of $\lk \Sigma$, then
\begin{align*}
\lup f_{j}([u_{1},\ldots u_{i+1}])={}& \sum_{v_{j}, [v_{j},u_{1},\ldots,u_{i+1}]\in S_{i+1}(\cl \st \Sigma) } (-1)^{1} f_{j}(\partial 
 [v_{j},u_{1},\ldots,u_{i+1}])\\
  &+ \sum_{v'_{j}, [v'_{j},u_{1},\ldots,u_{i+1}]\in S_{i+1}\cl \st \Sigma' } (-1)^{1}(- f_{j})(\partial 
 [v'_{j},u_{1},\ldots,u_{i+1}])\\
 ={}&0.\\
\end{align*}
Since   the functions $f_{j}$ are 0 on  the boundary of those
 $(i+1)$-simplices that are neither in $\cl \st \Sigma$ nor in $\cl \st \Sigma'$, 
 we omit them from the discussion. Hence the $\lambda_{j}$'s are the eigenvalues of $\lup(K^{\Sigma})$.
\end{proof}
As a simple consequence of Theorem \ref{main duplication} we have the following corollary.
 \begin{coll}
 \label{simple}
If the spectrum of the simplicial complex $\cl\st\Sigma$ contains the eigenvalue $\lambda$,
 with an eigenfunction $f$ that is identically equal to zero on  $\lk \Sigma$, 
then the spectrum of $K^{\Sigma}$ will contain the eigenvalue $\lambda$ as well.
\end{coll}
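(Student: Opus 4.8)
The plan is to deduce the corollary directly from Theorem~\ref{main duplication}. That theorem transfers to $\lup(K^{\Sigma})$ every eigenvalue of the \emph{restricted} operator $\lup(\cl\st\Sigma)\mid_{\st\Sigma}$, so it suffices to show that the hypothesis produces such a restricted eigenvalue. Concretely, I would show that an eigenfunction $f$ of the \emph{full} operator $\lup(\cl\st\Sigma)$ that vanishes on $\lk\Sigma$ restricts to an eigenfunction of $\lup(\cl\st\Sigma)\mid_{\st\Sigma}$ with the same eigenvalue $\lambda$.

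First I would record the splitting of the $i$-simplices of $\cl\st\Sigma$ into those belonging to $\st\Sigma$ and those belonging to $\lk\Sigma$, and note that for any $F\in S_i(\st\Sigma)$ every $(i+1)$-simplex $\bar F\supset F$ already lies in $\st\Sigma$ (the face of $F$ in $\Sigma$ is inherited by $\bar F$), so the degree of $F$, and hence the diagonal coefficient $\deg F/w(F)$ of $\lup$, is the same whether computed in $K$ or in $\cl\st\Sigma$. Consequently the matrix of $\lup(\cl\st\Sigma)$, read in the basis of elementary $i$-cochains, is block structured: the diagonal block indexed by $\st\Sigma$ is exactly $\lup(\cl\st\Sigma)\mid_{\st\Sigma}$, and the only entries coupling $\st\Sigma$ to the remaining $i$-simplices are off-diagonal terms $\tfrac{w(\bar F)}{w(F)}\sgn([F],\partial[\bar F])\sgn([F'],\partial[\bar F])$ with $F'\in S_i(\lk\Sigma)$.

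Next I would evaluate the eigenvalue equation $\lup(\cl\st\Sigma)f=\lambda f$ at an arbitrary $F\in S_i(\st\Sigma)$. Because $f([F'])=0$ for every $F'\in S_i(\lk\Sigma)$, all the coupling terms identified above drop out, and the equation collapses to
$$
\big(\lup(\cl\st\Sigma)\mid_{\st\Sigma}\,f\big)([F])=\lambda\,f([F]),
$$
i.e.\ $f\mid_{\st\Sigma}$ satisfies the eigenvalue equation for the restricted operator. Since $f$ is a genuine eigenfunction it is not identically zero, and as it vanishes on $\lk\Sigma$ it must be nonzero somewhere on $\st\Sigma$; hence $f\mid_{\st\Sigma}\neq 0$ and $\lambda\in\s(\lup(\cl\st\Sigma)\mid_{\st\Sigma})$. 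Theorem~\ref{main duplication} then yields $\lambda\in\s(\lup(K^{\Sigma}))$, with eigenfunction given by $f\mid_{\st\Sigma}$ on $\st\Sigma$, by $-f\mid_{\st\Sigma}$ on $\st\Sigma'$, and by $0$ elsewhere. The only point requiring care—and the place where the geometry of the motif genuinely enters—is the block decomposition in the second step: one must confirm that every $i$-simplex adjacent to $\st\Sigma$ within $\cl\st\Sigma$ but outside $\st\Sigma$ indeed lies in $\lk\Sigma$, so that the vanishing of $f$ on the link really does annihilate all cross terms.
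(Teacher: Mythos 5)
Your proof is correct and is exactly the argument the paper intends: the paper states this corollary without proof as a ``simple consequence'' of Theorem~\ref{main duplication}, and the missing step is precisely the one you supply, namely that an eigenfunction of $\lup(\cl\st\Sigma)$ vanishing on $\lk\Sigma$ restricts to an eigenfunction of the principal submatrix $\lup(\cl\st\Sigma)\mid_{\st\Sigma}$ with the same eigenvalue $\lambda$, to which the theorem then applies. Your supporting observations---that the $i$-simplices of $\cl\st\Sigma$ split exactly into $S_{i}(\st\Sigma)$ and $S_{i}(\lk\Sigma)$, and that every $(i+1)$-simplex containing an $i$-simplex of $\st\Sigma$ again lies in $\st\Sigma$, so degrees and weights agree---are correct and are the same facts the paper invokes implicitly in the proof of Theorem~\ref{main duplication}.
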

Theorem \ref{main duplication} is an improved and generalized version of  Theorem 2.3 from \cite{Jost}, which was stated for 
the case of the normalized graph Laplacian $\Delta_{0}^{up}$.
The  duplication of the motif $\Sigma$ will leave a specific trace in
the spectrum of the resulting simplicial complex $K^{\Sigma}$.
 In particular, if $\lambda_{1},\ldots,\lambda_{n}$ are  the
 eigenvalues of $\lup(\cl\st\Sigma)\mid_{\st\Sigma}$, then after
 duplicating  the motif $\Sigma$, $m$ times, the spectrum of the resulting complex will contain $(m-1)$ instances of every eigenvalue $\lambda_{j}$. 

It is not always straightforward to calculate the eigenvalues of $\lup(\cl\st\Sigma)\mid_{\st\Sigma}$; therefore we prove  
a theorem about interlacing of the $\lambda_{j}$ and the eigenvalues
$\mu_{j}$ of $\lup(\cl\st\Sigma)$.
With the notation of  Theorem \ref{main duplication}, we have
\begin{thm} 
The following inequality holds
$$\mu_{i}\leq \lambda_{i}\leq \mu_{i+\mid S_{i}(\lk \Sigma) \mid},$$
where $ \mid S_{i}(\lk \Sigma) \mid$ denotes the number of $i$-simplices in the link of a motif $\Sigma$.
\end{thm}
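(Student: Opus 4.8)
The plan is to identify $\lup(\cl\st\Sigma)\mid_{\st\Sigma}$ as the compression of the self-adjoint operator $A:=\lup(\cl\st\Sigma)$ to a coordinate subspace, and then to run a Cauchy-type interlacing argument through the min-max principle of Theorem~\ref{min-max theorem}.

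First I would fix the ambient space. The operator $A$ acts on $C^{i}(\cl\st\Sigma,\rb)$ and is self-adjoint, non-negative and compact for the weighted inner product $(\cdot,\cdot)_{C^{i}}$, with respect to which the elementary cochains $\{e_{[F]}\mid F\in S_{i}(\cl\st\Sigma)\}$ are orthogonal. Let $U\subseteq C^{i}(\cl\st\Sigma,\rb)$ be the subspace spanned by those $e_{[F]}$ with $F\in S_{i}(\st\Sigma)$; by definition $\lup(\cl\st\Sigma)\mid_{\st\Sigma}$ is the compression $P_{U}A\!\mid_{U}$, whose eigenvalues are the $\lambda_{j}$. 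Since $U$ is spanned by part of an orthogonal basis, $P_{U}g=g$ and hence $(P_{U}Ag,g)_{C^{i}}=(Ag,P_{U}g)_{C^{i}}=(Ag,g)_{C^{i}}$ for every $g\in U$, so the Rayleigh quotient $R(g)=(Ag,g)_{C^{i}}/(g,g)_{C^{i}}$ of $A$ restricts on $U$ to the Rayleigh quotient of its compression.

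The one genuinely combinatorial step is the dimension count $\dim C^{i}(\cl\st\Sigma,\rb)-\dim U$. Because $\Sigma$ is a subcomplex we have $\cl\Sigma=\Sigma$, whence $\st\cl\Sigma=\st\Sigma$, and the definition of the link yields $\lk\Sigma=\cl\st\Sigma-\st\cl\Sigma=\cl\st\Sigma-\st\Sigma$. Thus $S_{i}(\cl\st\Sigma)$ is the disjoint union of $S_{i}(\st\Sigma)$ and $S_{i}(\lk\Sigma)$, so that the defect equals $\dim C^{i}(\cl\st\Sigma,\rb)-\dim U=|S_{i}(\lk\Sigma)|=:\kappa$.

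It then remains to carry out the interlacing, for which I keep $i$ fixed as the dimension and use $k$ for the running eigenvalue index. Using Theorem~\ref{min-max theorem} together with the fact that $R$ agrees on $U$ with the Rayleigh quotient of the compression, the lower bound $\mu_{k}\le\lambda_{k}$ is immediate, because $\lambda_{k}=\min_{\mathcal{V}_{k}\subseteq U}\max_{g\in\mathcal{V}_{k}}R(g)$ minimises over a subfamily of the $k$-dimensional subspaces occurring in the min-max formula for $\mu_{k}$. For the upper bound $\lambda_{k}\le\mu_{k+\kappa}$ I would let $V$ be the span of the eigenvectors of $A$ associated with $\mu_{1},\ldots,\mu_{k+\kappa}$, so that $R(g)\le\mu_{k+\kappa}$ for all $g\in V$; a dimension count gives $\dim(V\cap U)\ge(k+\kappa)+\dim U-\dim C^{i}(\cl\st\Sigma,\rb)=k$, and choosing any $k$-dimensional $\mathcal{V}_{k}\subseteq V\cap U$ yields $\lambda_{k}\le\max_{g\in\mathcal{V}_{k}}R(g)\le\mu_{k+\kappa}$. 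Since $\kappa=|S_{i}(\lk\Sigma)|$, renaming the index $k$ back to $i$ recovers the stated inequality. The calculations are routine, and I expect the only delicate point to be the first step: one must be sure that \emph{restriction to} $\st\Sigma$ means the principal compression onto the coordinate subspace $U$ (equivalently, after the orthonormalising substitution $e_{[F]}\mapsto\sqrt{w(F)}\,e_{[F]}$, a principal submatrix of a genuine symmetric matrix), for only then does the classical interlacing bound with defect $\kappa=|S_{i}(\lk\Sigma)|$ apply verbatim; everything else is bookkeeping resting on Theorem~\ref{main duplication} and Theorem~\ref{min-max theorem}.
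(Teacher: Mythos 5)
Your proposal is correct and takes essentially the same route as the paper: the paper's proof likewise observes that the matrix of $\lup(\cl\st\Sigma)\mid_{\st\Sigma}$ is obtained from that of $\lup(\cl\st\Sigma)$ by deleting the $\mid S_{i}(\lk \Sigma)\mid$ rows and columns indexed by the $i$-faces of the link, and then invokes the Cauchy interlacing theorem. The only difference is one of detail: you derive the interlacing bounds from the min-max principle and verify the weighted-inner-product/compression subtlety explicitly, where the paper cites Cauchy interlacing as a black box.
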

\begin{proof}
The matrix $\lup(\cl\st\Sigma)\mid_{\st\Sigma}$ is obtained from  the matrix $\lup(\cl\st\Sigma)$ by deleting 
$ \mid S_{i}(\lk \Sigma) \mid$ rows and columns. Thus, the interlacing inequality follows directly from  the
 Cauchy interlacing theorem.
\end{proof}
\begin{rem} Theorem \ref{main duplication} and  Corollary \ref{simple} will hold 
for any choice of the weight function satisfying (\ref{normalizing condition}).
\end{rem}

\section{Eigenvalues in the spectrum of $\lup$ and the combinatorial properties they encode }
One of the main advantages of the \emph{normalized} combinatorial Laplace operator is the fact that the spectrum of any simplicial complex $K$ is bounded from above by a constant.
The eigenvalues of $\lup(K)$ are in the interval $[0,i+2]$.
As this is not the case for the  spectrum of the combinatorial Laplacian $L$, or for any other known type of the
combinatorial Laplace operator $\mathcal{L}$, it  seems impossible to assign  combinatorial properties to the 
presence of a particular eigenvalue in the spectrum of  $L$ and $\mathcal{L}$.
Nonetheless,  the global properties of
the 
 spectrum  of $L_{i}$ relate to the combinatorial properties of the complex.
 For instance,  the spectrum of certain combinatorially suitable complexes is proved to be integer
 (see \cite{Dong},\cite{DuvalShifted}).

Returning to the normalized Laplacian, the appearance of the eigenvalue $2$ in the spectrum of the normalized graph Laplacian $\Delta_{0}^{up}$ means that the underlying graph is
bipartite (see \cite{Chung}),
while  the eigenvalue $1$ is produced by   duplication of motifs (see \cite{Jost}).
In the following, we characterize some of the integer eigenvalues   in the spectrum of $\lup$.
  \subsection{Eigenvalue $i+2$}
Without  loss of generality  assume  $K$ is an $(i+1)$-path connected
simplicial complex on  the vertex set $[n]$.
As shown earlier, the following inequality holds
\begin{subequations}
\begin{align}
(\lup(K) f,f)=& \sum_{\bar{F}\in S_{i+1}(K)}f(\partial [\bar{F}])^{2}w(\bar{F})\\
\label{boundary i plus 2}
\leq &(i+2)\sum_{F\in S_{i}(K)} f([F])^{2}w(F).
\end{align}
\end{subequations}
The equality in (\ref{boundary i plus 2}) is reached iff 
there exists  a function $f \in C^{i}(K,\rb)$, which satisfies 
\begin{equation*}
\sgn([F_{j}],\partial [\bar{F}])f([F_{j}])= \sgn([F_{k}],\partial [\bar{F}])f([F_{k}]),
\end{equation*}
for every $\bar{F}$ in $S_{i+1}$ and $F_{j},F_{k}\in \partial \bar{F}$.
Thus $\lvert f([F])\lvert$ must be  constant for every $F\in S_{i}(K)$.  Assume further that 
$\lvert f([F])\lvert=1$, then for every $F\in \partial \bar{F}$, $f([F])$ is equal either to $\sgn([F],\partial [\bar{F}])$ or  to 
$-\sgn([F],\partial [\bar{F}])$.
Now it is possible to consider $f$   as a choice of orientation on the
$(i+1)$-faces of $K$.

\begin{thm}
\label{condition}
 The existence of a function $f$ satisfying the equality in  (\ref{boundary i plus 2})
 is equivalent to the existence of an orientation on the
 $(i+1)$-simplices of $K$, for which any two
 $(i+1)$-simplices intersecting in a common $i$-face induce the same orientation on the intersecting 
simplex (This condition is opposite to the condition of coherently oriented simplices).
\end{thm}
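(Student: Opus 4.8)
The plan is to unwind the Cauchy--Schwarz estimate that produced (\ref{boundary i plus 2}) and then to set up an explicit two-way correspondence between equality-achieving functions $f$ and the admissible orientations. Recall that the inequality came from applying Cauchy--Schwarz to $f(\partial[\bar F])=\sum_{F\in\partial\bar F}\sgn([F],\partial[\bar F])f([F])$, a sum of $i+2$ terms; equality in $(\sum_k a_k)^2\le (i+2)\sum_k a_k^2$ forces all summands $a_k=\sgn([F_k],\partial[\bar F])f([F_k])$ to coincide, which is exactly the displayed condition immediately preceding the theorem and which in turn forces $\lvert f([F])\rvert$ to be constant, normalized to $1$. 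Thus the whole statement reduces to matching these sign patterns with orientations.

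For the forward direction I would assume $f$ with $\lvert f([F])\rvert=1$ satisfies the equality condition, and for each $(i+1)$-simplex $\bar F$ set $\epsilon(\bar F):=\sgn([F],\partial[\bar F])f([F])$, which by the equality condition is independent of the chosen facet $F\in\partial\bar F$ and lies in $\{\pm 1\}$. Re-orienting $\bar F$ by the factor $\epsilon(\bar F)$ (keeping $[\bar F]$ when $\epsilon(\bar F)=1$ and replacing it by $-[\bar F]$ otherwise), a one-line sign computation using $\sgn([F],\partial(\epsilon\cdot[\bar F]))=\epsilon\,\sgn([F],\partial[\bar F])$ together with $\sgn^2=1$ yields $\sgn([F],\partial[\bar F]^{\mathrm{new}})=f([F])$ for every $F\in\partial\bar F$. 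Since the right-hand side depends only on $F$, any two $(i+1)$-simplices sharing the $i$-face $F$ induce the same orientation on $F$, as required.

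For the converse, given an orientation of the $(i+1)$-simplices with the stated property, I would define $f([F]):=\sgn([F],\partial[\bar F])$ for an arbitrary $\bar F\ni F$; the hypothesis that all simplices containing $F$ induce the same orientation on $F$ makes this independent of the choice of $\bar F$, so $f$ is well defined with $\lvert f([F])\rvert=1$. Then for every $\bar F$ and every $F\in\partial\bar F$ one has $\sgn([F],\partial[\bar F])f([F])=\sgn([F],\partial[\bar F])^2=1$, a value constant over the facets of $\bar F$, so the equality condition holds. Faces lying in no $(i+1)$-simplex do not occur under the standing hypothesis (cf.\ Remark \ref{pure sc dependencies}) that $K$ is pure $(i+1)$-dimensional and $(i+1)$-path connected, so every $i$-face relevant to $\lup$ receives a value.

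The construction is essentially a dictionary, so I do not anticipate a serious obstacle; the point demanding the most care is the well-definedness at each step --- that $\epsilon(\bar F)$ is independent of the facet used in the forward direction, and that $f([F])$ is independent of the coface used in the converse --- together with verifying that reversing the orientation of $\bar F$ propagates correctly through the boundary signs. I would also state explicitly that ``inducing the same orientation on the shared face'' means the sign identity $\sgn([F],\partial[\bar F])=\sgn([F],\partial[\bar F'])$ relative to the fixed reference orientation $[F]$, since this identity is precisely the hinge of the equivalence.
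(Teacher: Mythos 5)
Your proposal is correct and follows essentially the same route as the paper: the paper treats the theorem as a direct reformulation of the Cauchy--Schwarz equality discussion preceding it (all summands $\sgn([F],\partial[\bar F])f([F])$ equal within each $\bar F$, hence $\lvert f\rvert$ constant and normalized to $1$, hence $f$ is read as a choice of orientation), and your two-way dictionary between sign patterns and orientations just makes that identification explicit. The well-definedness checks you flag (independence of $\epsilon(\bar F)$ from the facet, and of $f([F])$ from the coface) are exactly the content the paper leaves implicit.
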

\begin{thm}
 \label{i+2 eigenvalue}
For  an $i$-connected simplicial complex $K$ the following statements are equivalent
\begin{enumerate}
 \item  Spectrum $\Delta^{up}_{i}(K)$ contains the  eigenvalue $i+2$,
 \item   There are no $(i+1)$-orientable circuits of odd length nor $(i+1)$-non orientable circuits of even  length in $K$.
\end{enumerate}
\end{thm}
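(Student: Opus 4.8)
The plan is to convert the spectral statement into a purely combinatorial balance condition on the $(i+1)$-dual graph $G_K$ and then read that condition off the circuits of $K$. The starting point is Theorem~\ref{condition}. Since $\lup$ is normalized, its eigenvalues lie in $[0,i+2]$, so $i+2\in\s(\lup(K))$ holds precisely when equality is attained in (\ref{boundary i plus 2}) by some nonzero $f$; by Theorem~\ref{condition} this is equivalent to the existence of an orientation on the $(i+1)$-faces of $K$ under which any two $(i+1)$-simplices sharing an $i$-face induce the \emph{same} orientation on that face. So I would first restate condition~(1) as the existence of such an ``anti-coherent'' orientation.

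Next I would encode this orientation problem on $G_K$. After fixing reference orientations $[\bar F]$ on the $(i+1)$-faces, attach to each edge $e$ of $G_K$, joining neighbours $\bar F,\bar F'$ that share the $i$-face $F$, the sign $c(e):=\sgn([F],\partial[\bar F])\,\sgn([F],\partial[\bar F'])$; thus $c(e)=-1$ exactly when $\bar F,\bar F'$ are coherently oriented and $c(e)=+1$ otherwise. A re-orientation is a choice $\epsilon\colon S_{i+1}(K)\to\{\pm1\}$, and a direct check turns the anti-coherence requirement into the linear system $\epsilon(\bar F)\epsilon(\bar F')=c(e)$ over $\{\pm1\}$, one equation per edge. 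Using that $K$ is $(i+1)$-path connected, so $G_K$ is connected, I would fix a spanning tree, propagate $\epsilon$ from a root (uniquely up to a global sign) to solve the tree equations, and observe that a non-tree edge is consistent iff the unique cycle it closes satisfies $\prod_{e}c(e)=+1$. Hence an anti-coherent orientation exists iff every cycle of $G_K$ has edge-sign product $+1$.

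The last step is to interpret the invariant $\prod_{e}c(e)$ of each cycle in the paper's vocabulary. I would first note that around a fixed cycle $\prod_{e}c(e)$ is independent of the reference orientations, since switching one $[\bar F]$ flips the signs of exactly its two incident cycle-edges and so leaves the product unchanged. A cycle of $G_K$ is precisely an $(i+1)$-circuit, and the same switching computation shows that such a circuit of length $m$ is orientable iff all its $c(e)$ can be made $-1$ simultaneously, i.e. iff $\prod_{e}c(e)=(-1)^{m}$. Combining the parity cases, $\prod_{e}c(e)=-1$ holds exactly for orientable circuits of odd length and for non-orientable circuits of even length. Therefore $i+2\notin\s(\lup(K))$ iff some cycle of $G_K$ has product $-1$, i.e. iff $K$ contains an orientable $(i+1)$-circuit of odd length or a non-orientable $(i+1)$-circuit of even length; this is exactly the negation of statement~(2), completing the equivalence.

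The main obstacle I anticipate is the sign bookkeeping in the last step: verifying that $\prod_{e}c(e)$ is orientation-independent and matching it cleanly to the orientable/parity dichotomy, including the off-by-one in the length convention of Definition~\ref{circuit def}. The balance argument itself is the routine spanning-tree propagation, so the genuine care lies in this sign algebra and in confirming that the Rayleigh-quotient maximum equals $i+2$ only under the normalizing condition $w(F)=\deg F$ that defines $\lup$.
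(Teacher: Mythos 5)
Your proposal is correct, and it shares the paper's backbone: both start from Theorem \ref{condition}, which turns membership of $i+2$ in $\s(\lup(K))$ into the existence of an orientation of the $(i+1)$-simplices under which any two $i$-down neighbours induce the \emph{same} orientation on their common $i$-face. The difference lies in how that orientation problem is settled. The paper treats the two implications separately and somewhat informally: for $(1)\Rightarrow(2)$ it flips every second orientation around a hypothetical bad circuit to reach a contradiction, and for $(2)\Rightarrow(1)$ it propagates an orientation face by face from an initial simplex, extracting an obstructing circuit when propagation fails. You package both steps into a single switching (balance) criterion on the signed $(i+1)$-dual graph: solvability of $\epsilon(\bar F)\epsilon(\bar F')=c(e)$ via a spanning tree, the equivalence with all cycle sign-products being $+1$, and the dictionary identifying product $-1$ with ``orientable of odd length or non-orientable of even length.'' This is Harary's balance theorem in disguise; it buys a uniform treatment of both directions and makes explicit the sign bookkeeping that the paper's propagation step only sketches. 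One small repair is needed, though: because of the ``iff'' in Definition \ref{circuit def}, an $(i+1)$-circuit corresponds to a \emph{chordless} cycle of $G_K$, so your assertion that a cycle of $G_K$ is precisely an $(i+1)$-circuit is not literally true. To pass from ``some cycle has product $-1$'' to ``some circuit has product $-1$'' (the negation of statement (2)), split a negative cycle along a chord into two shorter cycles whose sign-products multiply to $-1$, so one of them is negative, and induct on length. The paper's own proof elides the same point, so this is a one-line patch rather than a flaw in your method.
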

\begin{proof} 
$(1)\Rightarrow (2)$  
proceeds  by contradiction:
Assume that there exists an $(i+1)$-orientable circuit of odd length, whose $i$-simplices   $F_{1},\ldots,F_{2n+1}$ are
ordered increasingly, as suggested in  Definition \ref{circuit def}.
Then it is possible to orient these simplices in such a way that every two neighbouring simplices 
induce different orientations on their intersecting face. Denote these oriented simplices by $[F_{1}],\ldots,[F_{2n+1}]$.
In order to have the same orientation induced on the intersecting face, we reverse
 the orientation of every simplex $[F_{k}]$, for $k$ even. Thus, 
     $[F_{l}]$ and $-[F_{l+1}]$  induce the same orientation on $[F_{l}\cap F_{l+1}] $,   for every $1\leq l \leq 2n$. 
However, $[F_{1}]$ and $[F_{2n+1}]$   remain coherently oriented, which contradicts  Theorem ~\ref{condition}.
The analysis for the case of $(i+1)$-non-orientable circuits is analogous.\\
$(2)\Rightarrow (1)$: 
 Let $F_{1}$ be an arbitrary $(i+1)$-face of $K$. Consider its
 positive orientation $[F_{1}]$ and    call it an \emph{initial} 
oriented  face.
Let $[F_{i_{1}i_{2}\ldots i_{n}}]$ be  an $(i+1)$-face of $K$ which shares an $i$-face with $[F_{i_{1}i_{2}\ldots i_{n-1}}]$ 
and both faces induce the same orientation on their intersecting face.
Now, assume the  opposite: The eigenvalue $i+2$ is not in the spectrum
of $\lup$, i.e. it is not possible to choose an orientation on the $(i+1)$-faces of $K$, which satisfies the conditions of  Theorem \ref{condition}.
This means that after some number of steps in the construction above,  two faces
$[F_{i_{1}i_{2}\ldots i_{n}}]$, $[F_{i_{1}i_{2}\ldots i_{m}}]$  which are the same, but differently oriented are obtained.
Obviously, there exists a circuit containing  $[F_{i_{1}i_{2}\ldots i_{n}}]$, 
which does not admit an orientation as in  Theorem \ref{condition}.
This is possible only in the case when a circuit is orientable and odd
or non-orientable and even.
 This is a contradiction,
hence  $i+2$ is contained in the spectrum of $\lup$.\\
\end{proof}
The spectrum of the normalized graph Laplacian contains the eigenvalue $2$ iff the chromatic number of the underlying graph 
is $2$.
However, in general, such a connection between the chromatic number and the boundary eigenvalue in the spectrum of the normalized combinatorial Laplace operator 
only holds in one direction.
\begin{thm}
If  the chromatic number of the $1$-skeleton of the simplicial complex $K$ is $i+2$, then $i+2$ is contained
 in  $\s(\lup(K))$.
\end{thm}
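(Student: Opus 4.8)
The plan is to invoke Theorem~\ref{condition}, which reduces the claim to a purely combinatorial construction: it suffices to produce an orientation on the $(i+1)$-simplices of $K$ for which any two $(i+1)$-simplices sharing a common $i$-face induce the \emph{same} orientation on that face. Once such an orientation is in hand, Theorem~\ref{condition} delivers a function $f\in C^{i}(K,\rb)$ attaining equality in (\ref{boundary i plus 2}), so that the Rayleigh quotient $(\lup(K)f,f)/(f,f)$ equals $i+2$. Since $i+2$ is the maximal value this quotient can attain by the bound $\lambda_{\max}\le i+2$ already established for $\lup$, the number $i+2$ is forced to be the largest eigenvalue, and hence $i+2\in\s(\lup(K))$. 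The function $f$ is nonzero because $K$, being $(i+1)$-path connected by the standing assumption of this subsection, possesses at least one $(i+1)$-simplex.

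To build the orientation I would begin from a proper coloring $c\colon V\to\{1,\dots,i+2\}$ of the $1$-skeleton, which exists precisely because the chromatic number is $i+2$. The key observation is that the vertices of any $(i+1)$-simplex $\bar F$ are pairwise joined by edges of $K$, since every two-element subset of $\bar F$ is a face; they are therefore mutually adjacent in the $1$-skeleton, and a proper coloring with exactly $i+2$ colors must assign them all $i+2$ colors, each used once. This rainbow property lets me orient each $(i+1)$-simplex canonically by listing its vertices in increasing order of color, and I orient every $i$-face the same way.

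The heart of the argument is the compatibility check. Let $\bar F_{1}=E\cup\{a\}$ and $\bar F_{2}=E\cup\{b\}$ be two $(i+1)$-simplices sharing the $i$-face $E$. The $i+1$ vertices of $E$ carry $i+1$ distinct colors, so exactly one color $c^{\ast}$ is absent from $E$; since both $\bar F_{1}$ and $\bar F_{2}$ are rainbow, both apex vertices must carry this missing color, $c(a)=c(b)=c^{\ast}$. Consequently $a$ and $b$ occupy the same position, say index $k$, in the color-ordering of their respective simplices, and deleting them leaves the identical color-ordered face $[E]$ with the identical sign $\sgn([E],\partial[\bar F_{1}])=\sgn([E],\partial[\bar F_{2}])=(-1)^{k}$. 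Thus $\bar F_{1}$ and $\bar F_{2}$ induce the same orientation on $E$, which is exactly the condition required by Theorem~\ref{condition}.

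I expect the sign bookkeeping of this last step to be the main obstacle, as one must carefully track how the induced orientation on $E$ is read off from the boundary map; the coloring makes it transparent, because equal colors force equal insertion positions. It is also worth noting that the statement is not vacuous and that only the upper bound on $\chi$ is truly used: the $(i+1)$-simplex guaranteed by $(i+1)$-path connectedness has $i+2$ pairwise-adjacent vertices forming a clique, which already forces the chromatic number to be at least $i+2$, so the hypothesis supplies exactly the existence of a proper $(i+2)$-coloring. For $i=0$ this recovers the classical fact that a bipartite graph carries the eigenvalue $2$ in the spectrum of $\Delta_{0}^{up}$.
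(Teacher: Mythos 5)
Your proposal is correct and is essentially the paper's own argument: the paper likewise uses the proper $(i+2)$-coloring to order the vertices by color class, exploits the fact that every $(i+1)$-simplex is rainbow, and defines the function $f([v_0,\ldots,\hat v_j,\ldots,v_{i+1}])=(-1)^j$ --- which is exactly the function attached to your color-order orientation. The only cosmetic difference is that the paper verifies $\lup f=(i+2)f$ by direct computation, whereas you route the conclusion through Theorem~\ref{condition} and the Rayleigh-quotient maximum; both steps rest on the same machinery the paper has already established.
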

\begin{proof}
Let $I_{0},\ldots,I_{i+1}$ be disjoint sets of  vertices of $K$, such that every simplex of $K$ contains at most one 
point  of each set.
Thus,  there are no  vertices  of $\bar{F}\in S_{i+1}(K)$ which are contained in the same $I_{j}$.
To avoid notational complications we relabel the vertices of $K$: 
instead of $v\in I_{j}$ ($v\in \{1,\ldots, n\}$) we write $in+v$. 
Therefore, we have 
 $$v\in I_{j}, u\in I_{k} \textrm{ and } j<k \Rightarrow v<u.$$
The function $f$, defined as
$f([v_{0},\ldots,\hat{v_{j}},\ldots,v_{i+1}])=(-1)^{j}$ ($[v_{0},\ldots,v_{i+1}],$ is 
an $(i+1)$-simplex of $K$ whose vertices are ordered increasingly, i.e. $v_{0}<\ldots<v_{i+1}$) is the eigenfunction of 
$\lup(K)$ corresponding to the eigenvalue $i+2$, i.e.
\begin{align*}
 \Delta^{up}_{i}f([F])=&\frac{\sum_{\bar{F}: F\in \partial \bar{F}}f(\partial [\bar{F}])}
{\deg F} \\
=&(i+2)f([F]).\\
\end{align*}\\
\end{proof}

\subsection{Eigenvalues $(i+1)$ and $1$}

As a special case of  Theorem \ref{main duplication} we consider  a motif $\Sigma$ consisting of only one vertex.
\begin{coll}
\label{colloraly duplication}
When we duplicate an $i$- motif $\Sigma$ consisting of one vertex which is the center
of neither an $(i+1)$-orientable odd circuit
nor an $(i+1)$-non-orientable even circuit, then we produce the eigenvalue $(i+1)$ in the spectrum of $K^{\Sigma}$.
\end{coll}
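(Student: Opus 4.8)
The plan is to realize this corollary as a direct application of Theorem~\ref{main duplication}. Since $K^{\Sigma}$ is produced by doubling the one-vertex motif $\Sigma=\{v\}$, that theorem guarantees that every eigenvalue of the restricted operator $\lup(\cl\st\Sigma)\mid_{\st\Sigma}$ reappears in $\s(\lup(K^{\Sigma}))$. It therefore suffices to show $i+1\in\s(\lup(\cl\st v)\mid_{\st v})$. The first step is to recognize $\cl\st v$ as the cone $v*\lk v$, where $\dim\lk v=i$ by Definition~\ref{motifdef}. The $i$-simplices of $\st v$ are exactly the faces $v*H$ with $H\in S_{i-1}(\lk v)$, and the $(i+1)$-simplices of $\cl\st v$ are exactly the $v*G$ with $G\in S_{i}(\lk v)$ (no top simplex can avoid $v$, since $\dim\lk v=i$). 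I would fix the cone orientations $[v*G]:=[v,G]$ and record the elementary sign identity $\sgn([v*E],\partial[v*G])=-\sgn([E],\partial[G])$ produced by the extra leading vertex $v$.

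The key step is to show that, under the bijections $v*H\leftrightarrow H$ and $v*G\leftrightarrow G$, the principal submatrix $\lup(\cl\st v)\mid_{\st v}$ (obtained by deleting the rows and columns indexed by the $i$-faces $G\in S_i(\lk v)$, which are precisely the faces of $\cl\st v$ lying outside $\st v$) coincides with the normalized up-Laplacian $\Delta_{i-1}^{up}(\lk v)$. I would verify the entries directly: in $\lup$ each facet $v*G$ carries weight $1$ and each $i$-face $v*H$ carries weight $\deg(v*H)=\deg_{\lk v}H$, so the diagonal entries equal $1$ on both sides; and for two $(i-1)$-down neighbours the two cone signs multiply, so the factor $-1$ cancels and the off-diagonal entry becomes $\sgn([H],\partial[G])\sgn([H'],\partial[G])/\deg_{\lk v}H$, which is exactly the $(H,H')$-entry of $\Delta_{i-1}^{up}(\lk v)$. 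Hence the two operators are literally the same.

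With this identification, the eigenvalue $i+1=(i-1)+2$ of $\lup(\cl\st v)\mid_{\st v}$ is the eigenvalue $(i-1)+2$ of $\Delta_{i-1}^{up}(\lk v)$, which I would extract from Theorem~\ref{i+2 eigenvalue} applied to $\lk v$ with the order shifted from $i$ to $i-1$. The remaining ingredient is the dictionary between circuits: an $i$-circuit $G_1,\dots,G_m$ in $\lk v$ cones to $v*G_1,\dots,v*G_m$, whose members all contain $v$, so $v$ lies in their common intersection and they form exactly an $(i+1)$-circuit centered at $v$ in the sense of Definition~\ref{circuit def}; conversely every $(i+1)$-circuit centered at $v$ arises this way. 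Because coning again preserves the sign products on the shared faces and clearly preserves the length, orientability and parity transfer intact, so an $i$-orientable odd (resp.\ $i$-non-orientable even) circuit in $\lk v$ corresponds to an $(i+1)$-orientable odd (resp.\ $(i+1)$-non-orientable even) circuit centered at $v$. The hypothesis of the corollary then says precisely that $\lk v$ has no $i$-orientable odd and no $i$-non-orientable even circuit, which by the implication $(2)\Rightarrow(1)$ of Theorem~\ref{i+2 eigenvalue} (applied componentwise if $\lk v$ is not $i$-path connected) forces $i+1\in\s(\Delta_{i-1}^{up}(\lk v))=\s(\lup(\cl\st v)\mid_{\st v})$. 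Feeding this eigenvalue into Theorem~\ref{main duplication} finishes the argument.

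The main obstacle I expect is the sign bookkeeping under the cone: ensuring that the spurious factors $-1$ introduced by inserting the apex $v$ cancel both in the Laplacian entries (so that the restricted star operator is genuinely the \emph{normalized} link Laplacian rather than a signed variant) and in the circuit correspondence (so that orientability and parity of circuits pass through unchanged). Once these cancellations are established, the rest is routine.
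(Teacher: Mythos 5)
Your proposal is correct, but it reaches the eigenvalue $i+1$ by a genuinely different mechanism than the paper. The paper stays at order $i$: since every $(i+1)$-simplex of $\cl\st v$ contains $v$, the circuits of $\cl\st v$ are precisely the circuits centered at $v$, so Theorem~\ref{i+2 eigenvalue} gives $i+2\in\s(\lup(\cl\st v))$; Theorem~\ref{condition} then supplies the $\pm 1$ sign function $f$, and the paper checks by direct computation that the antisymmetric extension ($f$ on $\st\Sigma$, $-f$ on $\st\Sigma'$, zero elsewhere) is an eigenfunction of $\lup(K^{\Sigma})$ with eigenvalue $i+1$: the drop from $i+2$ to $i+1$ occurs because each $(i+1)$-simplex of $\cl\st v$ has exactly one face in $\lk v$, where the extension vanishes, so only $i+1$ of its $i+2$ boundary terms survive. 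You instead go down one dimension: after invoking Theorem~\ref{main duplication} (whose eigenfunction construction the paper in effect re-proves inline rather than cites), you identify $\lup(\cl\st v)\mid_{\st v}$ with $\Delta_{i-1}^{up}(\lk v)$ via the cone bijections $v*H\leftrightarrow H$ and $v*G\leftrightarrow G$, and then apply Theorem~\ref{i+2 eigenvalue} to $\lk v$ at order $i-1$. Your bookkeeping is right: $\sgn([v*E],\partial[v*G])=-\sgn([E],\partial[G])$, so the two minus signs cancel in every off-diagonal entry, $\deg_{\cl\st v}(v*H)=\deg_{\lk v}(H)$ gives matching normalizations, and circuits centered at $v$ are exactly cones of circuits of $\lk v$ with length and orientability preserved. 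What your route buys: it makes the value $i+1=(i-1)+2$ conceptually transparent (the extremal eigenvalue of the link Laplacian one dimension down) and isolates a reusable structural lemma (star-restriction of a cone equals the link Laplacian). What the paper's route buys: self-containedness, needing only one application of Theorem~\ref{condition} and an explicit eigenfunction check on $K^{\Sigma}$ itself. Both arguments share the same minor caveats --- facet weights taken equal to $1$ (i.e., the ambient complex treated as pure $(i+1)$-dimensional, consistent with the standing assumptions of Section~7), and componentwise application of Theorem~\ref{i+2 eigenvalue} when path-connectedness fails --- and you address the latter explicitly.
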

\begin{proof}
Let $v_{0}=\Sigma$  and let the $0$-simplices of $\lk \Sigma$ be $u_{1},\ldots,u_{k}$. In $\cl \st\Sigma$ all 
$(i+1)$-simplices must contain $v_{1}$. Since $v_{1}$ is neither a center of an $(i+1)$-orientable odd circuit nor
a center of an $(i+1)$-non-orientable even circuit, by  Theorem \ref{i+2 eigenvalue},  $i+2\in \mathbf{s}(\cl\st\Sigma)$.
From  Theorem ~\ref{condition}, it  follows that there is a  function $f\in C^{i}(\cl\st\Sigma,\rb)$, s.t. 
\begin{equation*}
\sgn([F_{1}],\partial [\bar{F}])f([F_{1}])=\ldots=\sgn([F_{i+2}],\partial [\bar{F}])f([F_{i+2}])
\end{equation*}
for every $\bar{F}\in S_{i+1}(\cl\st\Sigma)$ and each of its $i$-faces.
Let $g$ be a function which coincides with $f$ on oriented
 $i$-faces of $\st \Sigma$,  with $-f$ on oriented $i$-faces of $\st \Sigma'$ and is zero elsewhere.
We will now show that $g$ is an eigenfunction of $\lup(K^{\Sigma})$
associated to  the eigenvalue $(i+1)$. 
Let $F$ be an  arbitrary $i$-face of  $\st \Sigma$, then
\small
 \begin{align*}
\lup(\cl\st\Sigma)\mid_{ \st\Sigma}	 g([F])=&\frac{1}{w(F)}\sum_{\bar{F}\in S_{i+1}(\cl\st\Sigma)}
\sgn(F,\partial \bar{F})g(\partial [\bar{F}])\\
=&\frac{1}{w(F)}\sum_{\bar{F}\in S_{i+1}(\cl\st\Sigma)}\sgn([F],\partial [\bar{F}])
 \sum_{\substack{F_{j}\in \partial \bar{F}\\ F_{j}\notin \lk\Sigma}} \sgn([F_{j}],\partial [\bar{F}])f([F_{j}])\\
=&\frac{1}{w(F)}\sum_{\bar{F}\in S_{i+1}(\cl\st\Sigma)}\sgn([F],\partial [\bar{F}]) (i+1) \sgn([F],\partial [\bar{F}]) f([F])\\
=&(i+1)\frac{1}{w(F)}\sum_{\bar{F}\in S_{i+1}(\cl\st\Sigma)} f([F])\\
=&(i+1).
 \end{align*}
 \normalsize
The same analysis holds for $i$-faces of  $\st \Sigma'$.
Let   $F$  be an $i$-faces of $\cl \st \Sigma -\st \Sigma$, then  
\small
\begin{align*}
\lup(\cl\st\Sigma)\mid_{ \st\Sigma} f([F])={}&\frac{1}{w(F)}\left( \sum_{\bar{F}\in S_{i+1}(\cl\st\Sigma)}\sgn([F],\partial[\bar{F}]) g(\partial [\bar{F}])\right.\\
&+ \left. \sum_{\bar{F}\in S_{i+1}(\cl\st\Sigma')}\sgn([F],\partial [\bar{F}])g(\partial [\bar{F}]) \right) \\
={}&\frac{1}{w(F)}(i+1)\left( \sum_{\bar{F}\in S_{i+1}(\cl\st\Sigma)}g([F_{j}])+ \sum_{\bar{F}\in S_{i+1}(\cl\st\Sigma')}g([F_{j}])\right) \\
={}&\frac{1}{w(F)}(i+1)\left( \sum_{F_{j}\in S_{i}(\st\Sigma)}f([F_{j}])+ \sum_{F'_{j}\in S_{i}(\st\Sigma')}-f([F_{j}])\right) \\
 ={}& 0,
\end{align*}
\normalsize
where $F_{j}$ is a face of $\bar{F}$.
\end{proof}
This theorem is a generalization  of the vertex doubling effect on the normalized graph Laplacian $\Delta_{0}^{up}$ discussed
 in \cite{Jost}.

 In the graph case, the eigenvalue $1$ plays a very important role,
 since its multiplicity is usually significantly higher than other
 eigenvalues in graphs obtained from real world data,  see \cite{Banerjee}. 
For the Laplace operator  on higher dimensional simplicial
complexes,  the role of the eigenvalue $1$ is partially  transferred to the eigenvalue $(i+1)$ 
in higher dimensions, as shown  above.
Nevertheless, the next theorem gives a characterization  of the  eigenvalue $1$ in the spectrum of $\lup$.
\begin{thm}
Let $K$ be a simplicial complex with an eigenvalue $i+2$ in the spectrum of $\lup$ and let $G_{K}^{i}$ be its $i$-dual 
graph. Then,
$$
1\in \s(\Delta_{0}^{up}(G_{K}^{i}))\Leftrightarrow 1\in \s(\Delta_{i}^{up}(K)).
$$
\end{thm}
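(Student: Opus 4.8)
Since we work with the normalized operator $\lup(K)$, and (restricting as in Remark \ref{pure sc dependencies} to the pure $(i+1)$-dimensional subcomplex carrying $S_{i+1}(K)$) every $i$-face may be assumed to have positive degree, each diagonal entry of the matrix of $\lup(K)$ equals $\deg F/w(F)=1$. Hence $1\in\s(\lup(K))$ if and only if the off-diagonal part $A:=\lup(K)-I$ is singular. By the explicit formula for $\mathcal{L}_i^{up}$, the entry of $A$ indexed by distinct $i$-faces $F,F'$ is $\sgn([F],\partial[\bar F])\sgn([F'],\partial[\bar F])\,w(\bar F)/w(F)$, and it is nonzero exactly when $F$ and $F'$ are two faces of a common $(i+1)$-simplex $\bar F=F\cup F'$, that is, when $F$ and $F'$ are adjacent in $G_K^i$. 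Thus $A$ is a signed weighted adjacency operator of $G_K^i$, and the task is to erase its signs without disturbing the spectrum.

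This is where the hypothesis $i+2\in\s(\lup(K))$ is used. By Theorem \ref{condition} it is equivalent to the existence of $f_0\in C^i(K,\rb)$ with $|f_0([F])|=1$ for every $i$-face $F$ and $\sgn([F],\partial[\bar F])\,f_0([F])=\sgn([F'],\partial[\bar F])\,f_0([F'])$ for all faces $F,F'$ of every $\bar F\in S_{i+1}(K)$. I would use $f_0$ as a gauge: let $U$ be the diagonal operator $(Ug)([F])=f_0([F])g([F])$, which is an orthogonal involution for the weighted inner product because $f_0=\pm 1$. Squaring the displayed identity gives $\sgn([F],\partial[\bar F])\sgn([F'],\partial[\bar F])=f_0([F])f_0([F'])$, and a direct computation then yields
\begin{equation*}
U^{-1}\lup(K)\,U=I+M,\qquad M_{F,F'}=\frac{w(\bar F)}{w(F)}>0,
\end{equation*}
so that $M$ is the genuine (unsigned) normalized adjacency operator of $G_K^i$.

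Finally I would match $M$ with $\Delta_0^{up}(G_K^i)$. In the normalized complex each facet $\bar F$ carries weight $1$ and $\deg F$ counts the $(i+1)$-cofaces of $F$; since every such coface contributes exactly $i+1$ neighbours of $F$ in $G_K^i$, the degree of $F$ in $G_K^i$ equals $(i+1)\deg F$, whence $M=(i+1)\,D^{-1}\mathcal{A}$, where $D$ and $\mathcal{A}$ are the degree and adjacency matrices of $G_K^i$ and $\Delta_0^{up}(G_K^i)=I-D^{-1}\mathcal{A}$. Because $U$ is a similarity, $\s(\lup(K))=\s(I+M)$, so $1\in\s(\lup(K))$ iff $0\in\s(M)$ iff $D^{-1}\mathcal{A}$ is singular iff $1\in\s(\Delta_0^{up}(G_K^i))$, which establishes both implications simultaneously. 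I expect the bookkeeping of the middle step to be the main obstacle: one must verify that the single identity furnished by Theorem \ref{condition} cancels every off-diagonal sign of $\lup(K)$ at once, and that the degree comparison $\deg_{G_K^i}F=(i+1)\deg F$ is exact, so that singularity of $M$ corresponds to the eigenvalue $1$—and to no other value—of the normalized graph Laplacian of $G_K^i$.
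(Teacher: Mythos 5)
Your proof is correct and follows essentially the same route as the paper: both identify $\lup(K)-I$ with a signed normalized adjacency matrix of the dual graph $G_{K}^{i}$, both invoke Theorem \ref{condition} to eliminate the signs, and both reduce each side of the equivalence to singularity of the adjacency matrix of $G_{K}^{i}$. Your diagonal gauge $U$ built from $f_{0}$ is just the explicit form of the paper's sign-removal step (stated there as a reorientation), and your degree identity $\deg_{G_{K}^{i}}F=(i+1)\deg F$ is correct but not strictly needed, since singularity of $D^{-1}\mathcal{A}$ does not depend on the positive diagonal matrix $D$.
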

\begin{proof}
The multiplicity of the eigenvalue $1$ in the spectrum of $\lup(K)$ is equal to the dimension of the kernel 
of the adjacency matrix $A_{i}^{up}$ of the $i$-faces of $K$. Its entries are
$$(A_{i}^{up})_{[F],[F']}=\left\{  \begin{array}{ll}
\sgn([F],\partial [\bar{F}]) \sgn([F'],\partial [\bar{F}])& 
F \textrm{, } F' \textrm{ are $(i+1)$-up neighbours } \\
0 & \textrm{ otherwise }
\end{array}
\right.$$

Due to  Theorem \ref{condition}, 
it is possible to orient the $(i+1)$-simplices of $K$ such that   $\sgn([F],\partial [\bar{F}]) \sgn([F'],\partial [\bar{F}])$ 
is always positive.
Consequently, all entries of the matrix $A_{i}^{up}$ will be positive.
The adjacency matrices of $G_{K}^{i}$ and  $A_{i}^{up}$ are  the same, hence
the dimension of the kernel of  $A_{i}^{up}$  is equal to the
multiplicity of the eigenvalue $1$ in the
spectrum of the normalized graph Laplacian of the graph $G_{K}^{i}$.
\end{proof}
\section*{Acknowledgements}
We thank Frank Bauer and Johannes Rauh for useful suggestions. 
This work was supported by a PhD fellowship of the  International
Max-Plank Research School  "Mathematics in the Sciences" for the first
author. The second author was supported by the ERC Advanced Grant
FP7-267087 and the Volkswagen Foundation.

\bibliography{elsevier}
\bibliographystyle{plain}{}

\end{document}